    \newcommand{\be}{\begin{equation}}
    \newcommand{\ee}{\end{equation}}
    \newcommand{\nrm}[1]{\left\| #1 \right\|}
    \newcommand\dt {{\Delta t}}
    \def\0{\mbox{\boldmath $0$}}
    \def\g{\mbox{\boldmath $g$}}
    \def\h{\mbox{\boldmath $h$}}
    \def\0{\mbox{\boldmath $0$}}
	\newtheorem{thm}{Theorem}[section]
	\newtheorem{defn}[thm]{Definition}
	\newtheorem{lem}[thm]{Lemma}
	\newtheorem{rem}[thm]{Remark}
\begin{document}
	\title{A second order accurate numerical scheme for the porous medium equation by an energetic variational approach}
	
	\author{
Chenghua Duan \thanks{Department of Mathematics, Soochow University, Suzhou 215006, China and Shanghai Center for Mathematical Sciences, Fudan University, Shanghai 200438, China (chduan@fudan.edu.cn)}
\and Wenbin Chen \thanks{School of Mathematical Sciences, Fudan University, Shanghai 200438, China (wbchen@fudan.edu.cn)}
\and
Chun Liu \thanks{Department of Applied Mathematics, Illinois Institute of Technology, Chicago, IL 60616, USA (cliu124@iit.edu)}
\and		
Cheng Wang\thanks{Department of Mathematics, The University of Massachusetts, North Dartmouth, MA  02747 (Corresponding Author: cwang1@umassd.edu)}	
\and
Xingye Yue\thanks{Department of Mathematics, Soochow University, Suzhou 215006, China (xyyue@suda.edu.cn)}
}	
	
\date{}
\maketitle
\numberwithin{equation}{section}

{
\begin{abstract}
\footnotesize
The porous medium equation (PME) is a typical nonlinear degenerate parabolic equation. 
An energetic variational approach has been studied in a recent work~\cite{C. Duan(2018)}, in which the trajectory equation is obtained, and a few first order accurate numerical schemes have been developed and analyzed. 
In this paper,  we construct and analyze a second order accurate numerical scheme in both time and space. The unique solvability, energy stability are established, based on the convexity analysis. In addition, we provide a detailed convergence analysis for the proposed numerical scheme. A careful higher order asymptotic expansion is performed and two step error estimates are undertaken. In more details, a rough estimate is needed to control the highly nonlinear term in a discrete $W^{1,\infty}$ norm, and a refined estimate is applied to derive the optimal error order. Some numerical examples are presented as well.

{\it Keywords}:   Energetic variational approach; porous medium equation; trajectory equation; optimal rate convergence analysis; higher order asymptotic expansion. 

{\it AMS subject classification}:   35K65, 65M06, 65M12, 76M28, 76M30
 \end{abstract}
}

\section{Introduction and background }
\label{sec:1}
In this paper,  we consider the porous medium equation (PME):  $$ \partial_t f=\Delta_{x} (f^{m}), \ x\in\Omega\subset\mathbb{R}^d,\  m>1,$$
where  $f:=f(x,t)$ is a non-negative scalar function of space $x\in\mathbb{R}^d$ ($d \ge 1$) and the time $t\in\mathbb{R}^{+}$, and $m$ is a constant larger than 1.  It has been widely  applied   in many physical and biological models, such as an isentropic gas flow through a porous medium,  the viscous gravity currents, nonlinear heat transfer and image processing \cite{J. L. Vazquez(2007)}, etc.  

One basic characteristic of the PME is its degeneracy at points where $f = 0$.
In turn, there are  many special features: the finite speed of  propagation, the free boundary, a possible waiting time phenomenon \cite{D.G.Aronson(1983), C. Duan(2018), J. L. Vazquez(2007)}.
Many theoretical analyses have been derived in the existing literature \cite{D.G. Aronson(1969), A.S. Kalasnikov(1967), O.A. Oleinik(1958), S. Shmarev(2003),S. Shmarev(2005), J. L. Vazquez(2007)}, etc.     Meanwhile, various numerical methods have been studied for the PME, such as  finite difference approach \cite{J. L. Graveleau(1971)}, tracking algorithm method \cite{E. DiBenedetto(1984)},  a  local discontinuous Galerkin finite element method \cite{Q. Zhang(2009)},  Variational Particle Scheme (VPS) \cite{M. Westdickenberg(2010)} and  an adaptive moving mesh finite element method \cite{C. Ngo(2017)}.  

The numerical methods have also been developed for  the PME  by an Energetic Variational Approach (EnVarA).  In this way, the numerical solution can naturally keep the physical laws, including  the conservation of mass, energy dissipation and force balance.    Based on different dissipative energy laws, the numerical schemes have been studied in  \cite{C. Duan(2018)}. 
Besides a good approximation for the solution without oscillation and the free boundary, the notable advantage is that the waiting time problem could be naturally treated, which has been a well-known difficult issue for all the existing methods.  In addition,  the second order convergence in space and the first order convergence in time have been reported for both schemes in \cite{C. Duan(2018),C.H.Duan(2019)}.  

The aim of the paper is to construct a second order scheme in both time and space, which is also uniquely solvable  and satisfies the discrete energy dissipation law on the admission set at theoretical level.  In addition, we provide an optimal rate convergence analysis for the proposed second order numerical scheme. In particular, the highly nonlinear nature of  the trajectory equation  makes the problem very challenging. To overcome these subtle difficulties, we use a higher order expansion technique to ensure a higher order consistency estimate, which is needed to obtain a discrete $W^{1,\infty}$ bound of the numerical solution. Similar ideas have been reported in earlier literature for incompressible fluid equations~\cite{C.H.Duan(2019), W. E(1995), W. E(2002), R. Samelson(2003), C. Wang(2000)}, while the analysis presented in this work turns out to be more complicated, due to the lack of a linear diffusion term in the trajectory equation of PME and the  requirement of the high order numerical scheme. In addition, to recover the nonlinear analysis, we have to carry out two step estimates:  rough estimate and refined estimate  \cite{C.H.Duan(2019)}. 
Different from a standard error estimate, the rough estimate controls the nonlinear term, which is an effective approach to handle the highly nonlinear term.

   This paper is organized as follows. The trajectory equation of  PME and  the numerical scheme are outlined in Sections \ref{sec:trajectory} \ref{sec:num}, respectively.  The proof of unique solvability analysis, unconditional energy stability, and optimal rate convergence analysis  are  provided in Sections \ref{sec:uni}, \ref{sec:ene} and \ref{sec:con}, respectively. Moreover, the convergence analysis of  Newton's iteration for the nonlinear numerical scheme can be found in Section  \ref{sec:Newton}. Finally we present a simple numerical example to demonstrate the convergence rate of the numerical scheme in Section \ref{sec:numResults}. Some concluding remarks are made in Section~\ref{sec: conclusion}.    

\section{Trajectory equation of the PME}\label{sec:trajectory}
 In this part,  the one-dimensional trajectory equation will be reviewed, derived by an Energetic Variational Approach. We solve  the following initial-boundary problem:
\begin{eqnarray}
&&
  \partial_t f+\partial_{x} (f{\bf v})=0, \ x\in\Omega\subset\mathbb{R}, \ t>0,
 \label{eqcm}\\
&& f{\bf v}=-\partial_{x}(f^{m}), \ x\in\Omega, \  m>1, \label{eqDa}\\
&& f(x,0)=f_{0}(x)\geq 0, \ x\in\Omega, \label{eqini} \\
&&\partial_{x}f=0, \ x\in\partial\Omega,\ t>0, 
 \label{eqbc}
 \end{eqnarray}
where $f$ is a non-negative function,  $\Omega$ is a bounded domain and  ${\bf v}$ is the velocity.

The following lemma is available.

\begin{lem} \label{lem-dissip}
  $f(x,t)$ is a positive solution of \eqref{eqcm}-\eqref{eqbc} if and only if
  $f(x,t)$ satisfies the corresponding energy dissipation law:
  \begin{equation}\label{equ:energylaw}
   \frac{d}{dt}\int_{\Omega}f\ln fdx=-\int_{\Omega}\frac{f}{mf^{m-1}}|\textbf{v}|^{2}dx.
  \end{equation}
    \end{lem}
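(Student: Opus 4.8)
The plan is to prove the two implications separately. The forward direction (a positive solution of \eqref{eqcm}--\eqref{eqbc} obeys \eqref{equ:energylaw}) is a direct computation, and the converse is obtained by running the same chain of identities in reverse; the only genuinely delicate point is the last step of the converse.

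For the forward direction, I would begin by differentiating the entropy, using that $f>0$ makes $f\ln f$ smooth:
$$\frac{d}{dt}\int_\Omega f\ln f\, dx=\int_\Omega(\partial_t f)(1+\ln f)\, dx.$$
Substituting the mass balance \eqref{eqcm}, $\partial_t f=-\partial_x(f\mathbf v)$, and integrating by parts in $x$ gives $\int_\Omega f\mathbf v\,\partial_x(1+\ln f)\, dx$ minus the boundary term $[\,f\mathbf v(1+\ln f)\,]_{\partial\Omega}$. The key observation is that this boundary term vanishes: by Darcy's law \eqref{eqDa}, $f\mathbf v=-\partial_x(f^m)=-mf^{m-1}\partial_x f$, and the no-flux condition \eqref{eqbc} forces $\partial_x f=0$ on $\partial\Omega$, so $f\mathbf v=0$ there. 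Since $\partial_x(1+\ln f)=\partial_x f/f$, the surviving integral is $\int_\Omega \mathbf v\,\partial_x f\, dx$. Finally, using \eqref{eqDa} once more in the rearranged form $\partial_x f=-\dfrac{f}{mf^{m-1}}\mathbf v$ turns this into $-\int_\Omega\dfrac{f}{mf^{m-1}}|\mathbf v|^2\, dx$, which is exactly \eqref{equ:energylaw}.

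For the converse, I would assume $f>0$ satisfies \eqref{eqcm}, \eqref{eqini}, \eqref{eqbc} --- together with the natural no-flux requirement $f\mathbf v=0$ on $\partial\Omega$, which is built into the model --- and the identity \eqref{equ:energylaw}. The computation above, up to and excluding the very last substitution, used only \eqref{eqcm}, the boundary conditions and positivity, hence still yields $\frac{d}{dt}\int_\Omega f\ln f\, dx=\int_\Omega \mathbf v\,\partial_x f\, dx$. Comparing with the assumed \eqref{equ:energylaw} and writing $\partial_x f=\frac{1}{mf^{m-1}}\partial_x(f^m)$ leads to $\int_\Omega\dfrac{\mathbf v}{mf^{m-1}}\bigl(f\mathbf v+\partial_x(f^m)\bigr)\, dx=0$, whose integrand is precisely the residual of Darcy's law. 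Concluding that this residual vanishes pointwise --- hence that \eqref{eqDa} holds and $f$ is a solution of \eqref{eqcm}--\eqref{eqbc} --- is the step that invokes the energetic-variational structure: the velocity is not an independent field but is singled out by the maximum-dissipation (force-balance) principle, which is exactly the Euler--Lagrange statement associated with the integral identity just obtained.

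The main obstacle is therefore this last pointwise deduction in the converse; the remaining ingredients --- differentiating the entropy, integrating by parts, and discarding the boundary terms --- are routine once the positivity of $f$ and the condition \eqref{eqbc} are in hand. I would accordingly organize the write-up so that the forward implication is the explicit calculation above, and the converse is presented by reversing that calculation and then appealing to the variational characterization of $\mathbf v$ to pass from the integrated identity to the pointwise law \eqref{eqDa}.
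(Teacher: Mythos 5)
The paper does not prove Lemma~2.1 itself; it defers to the earlier paper~\cite{C.H.Duan(2019)} for the argument, so there is no in-paper proof against which to match your route step by step. On the forward implication your computation is correct and is the standard one: differentiate $\int_\Omega f\ln f\,dx$, insert $\partial_t f = -\partial_x(f\mathbf v)$ from~\eqref{eqcm}, integrate by parts, use Darcy's law~\eqref{eqDa} together with the Neumann condition~\eqref{eqbc} to kill the boundary term $[\,f\mathbf v(1+\ln f)\,]_{\partial\Omega}$, and substitute~\eqref{eqDa} once more to turn $\int_\Omega \mathbf v\,\partial_x f\,dx$ into $-\int_\Omega \frac{f}{mf^{m-1}}|\mathbf v|^2\,dx$. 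This direction is sound.

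The converse, however, has a genuine gap, and I think you have misdiagnosed exactly where it sits. You correctly reduce matters to the integral identity
\[
\int_\Omega \frac{\mathbf v}{mf^{m-1}}\bigl(f\mathbf v + \partial_x(f^m)\bigr)\,dx = 0,
\]
but from a single scalar identity of the form $\int_\Omega w\,g\,dx = 0$, with $w=\mathbf v/(mf^{m-1})$ one fixed function rather than an arbitrary test function, you cannot conclude that the Darcy residual $g = f\mathbf v + \partial_x(f^m)$ vanishes pointwise: the integrand $wg$ need not even have a sign, since $wg = \frac{f}{mf^{m-1}}|\mathbf v|^2 + \mathbf v\,\partial_x f$ has an indefinite cross term. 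Invoking the ``variational characterization of $\mathbf v$'' at this stage is a gesture toward the EnVarA framework, not a valid inference from the identity you derived. The EnVarA derivation of the PME from the energy law does not proceed by reversing the entropy calculation. Rather, it takes the free energy $\int_\Omega f\ln f\,dx$ and the dissipation functional $\int_\Omega \frac{f}{mf^{m-1}}|\mathbf v|^2\,dx$ as primitive data, passes to the Lagrangian flow map, applies the Least Action Principle to obtain the conservative force $-\partial_x(f^m)$ (using the kinematic relation~\eqref{equ:conservationL}), applies the Maximum Dissipation Principle to obtain the dissipative force $\frac{f}{mf^{m-1}}\mathbf v$, and reads Darcy's law~\eqref{eqDa} off the force balance. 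That argument varies the flow map and the velocity over arbitrary admissible perturbations, which is precisely what licenses the pointwise conclusion you need and what your reversal cannot supply. So the correct organization of the converse is not ``reverse the forward computation and then appeal to variational structure,'' but ``run the LAP/MDP derivation from the energy law''; the latter is what \cite{C.H.Duan(2019)} does.
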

The detail proof of Lemma  \ref{lem-dissip} can be found in \cite{C.H.Duan(2019)}.

\begin{rem}
There is an assumption that the value of initial state $f_0(x)$ is positive  in $\Omega$ to  make $\int_{\Omega}f\ln fdx$ well-defined in \eqref{equ:energylaw}.\end{rem}


 Based on the Energetic Variational Approach, we obtain the initial-boundary problem of  trajectory  $x$ in the Lagrangian coordinate:

 \begin{eqnarray}
 &&    \frac{f_{0}(X)}{m\big(\frac{f_0(X)}{\partial_X x}\big)^{m-1}}\partial_{t} {x}=-\partial_{X}\left(\frac{f_0(X)}{\partial_X x}\right),\ X\in\Omega,\ t>0,  \label{eqtra}\\
 && x|_{\partial\Omega}= X|_{\partial\Omega},\ t>0,\label{eqtrabou} \\
     && x(X,0)=X, \ X\in\Omega. \label{eqtraini}
    \end{eqnarray}
    Note that (\ref{eqcm}) is the conservation law. In the Lagrangian coordinate, its solution can be expressed by:
 \begin{equation}\label{equ:conservationL}
  f(x(X,t),t)=\frac{f_{0}(X)}{\frac{\partial x(X,t)}{\partial X}},
\end{equation}
where $f_{0}(X)$ is the positive initial data and $\partial_X x:=\frac{\partial x(X,t)}{\partial X}$ is the \emph{deformation\ gradient} in one dimension.

Finally, with a substitution of \eqref{eqtra}-\eqref{eqtraini} into \eqref{equ:conservationL}, we obtain the solution $f(x,t)$ to \eqref{eqcm}-\eqref{eqbc}.

\section{The second order accurate numerical scheme} \label{sec:num}
%
    Let $X_0$ be the left point of $\Omega$ and $h=\frac{|\Omega|}{M}$  be the spatial step,  $M\in\mathbb{N}^{+}$. Denote by $X_{r}=X(r)=X_0+ r h$, where $r$ takes on integer and half integer values.  Let $\mathcal{E}_{M}$ and $\mathcal{C}_{M}$ be the spaces of functions whose domains are $\{X_{i}\ |\ i=0,...,M\}$ and $\{X_{i-\frac{1}{2}}\ |\ i=1,...,M\}$, respectively. In component form, these functions are identified via
$l_{i}=l(X_{i})$, $i=0,...,M$, for $l\in\mathcal{E}_{M}$, and $\phi_{i-\frac{1}{2}}=\phi(X_{i-\frac{1}{2}})$,  $i=1,...,M$, for $\phi\in\mathcal{C}_{M}$.

\indent{The} difference operator $D_{h}: \mathcal{E}_{M}\rightarrow\mathcal{C}_{M}$, $d_{h}: \mathcal{C}_{M}\rightarrow\mathcal{E}_{M}$, and $\widetilde{D}_h: \mathcal{E}_{M}\rightarrow\mathcal{E}_{M}$  can be defined  as:
\begin{align}\label{equ:dif1}
& (D_{h}l)_{i-\frac{1}{2}}= (l_{i}-l_{i-1})/h,\ i=1,...,M, \\
&  (d_{h}\phi)_{i}= (\phi_{i+\frac{1}{2}}-\phi_{i-\frac{1}{2}})/h,\ i=1,...,M-1,\\
&(\widetilde{D}_hl)_{i}=\left\{
\begin{array}{lcl}
(l_{i+1}-l_{i-1})/2h, &\mbox{$ i=1,...,M-1$},\\
 (4l_{i+1}-l_{i+2}-3l_{i})/2h,&  i=0,  \\
 (l_{i-2}-4l_{i-1}+3l_{i})/2h, &  i=M.\end{array}\right.
\end{align}

Let $\mathcal{Q}:=\{l \in\mathcal{E}_{M}\ |\ l_{i-1}<l_{i},\ 1\leq i\leq M;\ l_{0}=X_0,\ l_{M}=X_M\}$ be the admissible set, in which the particles are arranged in the order without twisting or exchanging. Its boundary set is $\partial\mathcal{Q}:=\{l \in\mathcal{E}_{M}\ |\ l_{i-1}\leq l_i,\ 1\leq i\leq M,\ and\  l_{i}=l_{i-1},\ for\ some\ 1\leq i\leq M;\ l_{0}=X_0,\ l_{M}=X_M\}$. Then $\bar{\mathcal{Q}}:=\mathcal{Q}\cup\partial\mathcal{Q}$ is a closed convex set

We propose the second order numerical scheme as follows, based on a modified Crank-Nicolson approach. Given the positive initial state $f_0(X)\in\mathcal{E}_M$ and the particle position $x^n, x^{n-1} \in\mathcal{Q}$, find $x^{n+1}=(x^{n+1}_{0},...,x^{n+1}_{M})\in \mathcal{Q}$ such that
\begin{eqnarray}
   \frac{f_{0}(X_{i})}{m\big(\frac{f_0(X)}{ S_h (x^n , x^{n-1} ) } \big)_{i}^{m-1}}\cdot\frac{x^{n+1}_{i}-x^{n}_{i}}{\dt}  &=&
  -d_{h}\Big[\Big( f_{0}(X) \frac{\ln (D_h x^{n+1}) - \ln ( D_h x^n) }{D_h x^{n+1} - D_h x^n} \Big) \nonumber
\\
  && - A_0 \dt D_h ( x^{n+1} - x^n ) + \dt^2 ( \frac{1}{D_h x^{n+1}} - \frac{1}{D_h x^n} ) \Big]_{i} ,  \label{scheme-2nd-1}
\\
  &&
  \mbox{with} \quad
  S_h (x^n, x^{n-1}) = \max ( \widetilde{D}_h ( \frac32 x^n - \frac12 x^{n-1} ) , \dt^2 ) ,
  \nonumber
\end{eqnarray}
for $1\leq i \leq M-1$, and we take with $x_0^{n+1}=0$ and $x_M^{n+1}=1$, $n=0,\cdots,N-1$.

To solve the nonlinear equation  \eqref{scheme-2nd-1}, we use  Damped  Newton's iteration \cite{Y. Nesterov(1994)}.

\noindent{\bf  Damped Newton's iteration}. \ \ Set $x^{n+1, 0}= x^n$. For $k=0,1,2,\cdots$, update $x^{n+1,k+1}=x^{n+1,k} +  \omega(\lambda)\delta_x$ such that
\begin{eqnarray}\label{equ:numnum1}
&&
\frac{f_{0}(X_{i})}{m\big(\frac{f_0(X)}{ S_h (x^n , x^{n-1} ) } \big)_{i}^{m-1}}\cdot\frac{ \delta_{x_{i}}}{\tau} - d_h\Big[ \Big(f_{0}(X) W^{n+1,k}+A_0\tau+\frac{\tau^2}{(D_h x^{n+1,k})^2}\Big) D_h \delta_{{x}}\Big]_{i}\nonumber\\
&&
= - \frac{f_{0}(X_{i})}{m\big(\frac{f_0(X)}{ S_h (x^n , x^{n-1} ) } \big)_{i}^{m-1}}\frac{x^{n+1,k}_{i} -x^{n}_{i}}{\tau} - d_{h}\Big[ f_{0}(X) R^{n+1,k} \nonumber
\\
  &&\ \ \   - A_0 \tau D_h ( x^{n+1,k} - x^n ) + \tau^2 ( \frac{1}{D_h x^{n+1,k}} - \frac{1}{D_h x^n} ) \Big]_{i} ,  \label{scheme-2nd-Newton}
\ \ 1\leq i\leq M-1,\\
&&
\delta_{x_0}=\delta_{x_M}=0,  \nonumber
\end{eqnarray}
 where for $i=1,\cdots,M$, 
\begin{equation*}
W_{i-\frac{1}{2}}^{n+1,k}=\left\{
\begin{array}{lcl}
\left[\frac{(1-\frac{D_h x^{n}}{D_h x^{n+1,k}})+\ln(\frac{D_h x^{n}}{D_h x^{n+1,k}})}{(D_h x^{n+1,k}-D_h x^n)^2}\right]_{i-\frac{1}{2}}, & |D_h x^{n+1,k}_{i-\frac{1}{2}}-D_h x^n_{i-\frac{1}{2}}|\neq 0,\\
-\frac{1}{2\Big(D_h x^{n+1,k}_{i-\frac{1}{2}}\Big)^2}, & |D_h x^{n+1,k}_{i-\frac{1}{2}}-D_h x^n_{i-\frac{1}{2}}|= 0,
\end{array}\right.
\end{equation*}

\begin{equation*}
R^{n+1,k}_{i-\frac{1}{2}}=\left\{
\begin{array}{lcl}
\Big[\frac{\ln (D_h x^{n+1,k}) - \ln ( D_h x^n) }{D_h x^{n+1,k} - D_h x^n}\Big]_{i-\frac{1}{2}}, & |D_h x^{n+1,k}_{i-\frac{1}{2}}-D_h x^n_{i-\frac{1}{2}}|\neq 0,\\
\frac{1}{D_h x^{n+1,k}_{i-\frac{1}{2}}}, & |D_h x^{n+1,k}_{i-\frac{1}{2}}-D_h x^n_{i-\frac{1}{2}}|= 0.
\end{array}\right.
\end{equation*}
Let  $\hat{x} := x^{n+1} - X$ and  \begin{equation}\label{Newton}
\omega(\lambda)=\left\{
\begin{array}{lcl}
\frac{1}{\lambda}, &&{\lambda > \lambda'},\\
\frac{1-\lambda}{\lambda(3-\lambda)}, &&{\lambda'\ge\lambda\ge\lambda^{*}},\\
1, && {\lambda < \lambda^{*}},
\end{array}\right.
\end{equation}
where $\lambda^*=2-3^{\frac{1}{2}}$, $\lambda'\in[\lambda^*, 1)$ and 
\begin{equation}\label{lambda}
\lambda^2:=\lambda^2(F,\hat{x}^{n+1,k})=\frac{1}{a}[F'(\hat{x}^{n+1,k})]^T[F''(\hat{x}^{n+1,k})]^{-1}F'(\hat{x}^{n+1,k}),
\end{equation}
with $a:=(h\min\limits_{0\leq i\leq M}f_0(X_i))/2C_{Newton}^2$ (a  constant $C_{Newton}>0$),  the corresponding discrete  functional  $F(\hat{x})$  defined latter in \eqref{solvability-2-1}-\eqref{solvability-2-4},  its   gradient vector $F'$ and Hessian matrix $ F''$.

Then we obtain the numerical solution $f(x^n,t^n):=f^n_i$   by
\begin{equation}\label{eqDen}
f^n_i=\frac{f_0(X)}{\widetilde{D}_h x^n_i},\ 0\leq i\leq M,
\end{equation}
which is the discrete scheme of \eqref{equ:conservationL}.

\section{Unique solvability analysis}\label{sec:uni}

\begin{thm}  \label{thm:solvability}
Given any $x^n, x^{n-1}$, with
\begin{equation}
  0 < Q^{(k),1}  \le D_h x^k \le Q^{(k),2} ,   \quad \mbox{for $k=n, n-1$} .
  \label{assumption-1}
\end{equation}
The proposed numerical scheme~\eqref{scheme-2nd-1} is uniquely solvable, with $D_h x^{n+1} > 0$ at a point-wise level.
\end{thm}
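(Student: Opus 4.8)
The plan is to recast the nonlinear scheme~\eqref{scheme-2nd-1} as the Euler--Lagrange equation of a strictly convex functional over the open convex set $\mathcal{Q}$, and then argue that a minimizer exists in the interior and is unique. First I would introduce the discrete energy functional $F(x^{n+1})$ on $\mathcal{Q}$ whose first variation, with respect to the inner product weighted by $\frac{f_0(X_i)}{m(f_0(X)/S_h(x^n,x^{n-1}))_i^{m-1}}$, reproduces~\eqref{scheme-2nd-1}; concretely $F$ should combine the quadratic ``inertial'' term $\frac{1}{2\dt}\sum_i \frac{f_0(X_i)}{m(f_0(X)/S_h)_i^{m-1}}(x_i-x_i^n)^2$ with potential terms built from the primitive of the flux on the right-hand side. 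The three pieces of that flux are: (i) the entropy-type term $f_0(X)\,\frac{\ln(D_hx^{n+1})-\ln(D_hx^n)}{D_hx^{n+1}-D_hx^n}$, whose potential I expect to be a divided-difference/Bregman-type expression in $D_hx$ that is convex in $x$ because $u\mapsto u\ln u - u$ is convex; (ii) the term $-A_0\dt\,D_h(x^{n+1}-x^n)$, whose potential $\frac{A_0\dt}{2}\|D_h(x-x^n)\|^2$ is manifestly convex; and (iii) $\dt^2\big(\frac{1}{D_hx^{n+1}}-\frac{1}{D_hx^n}\big)$, whose potential involves $-\dt^2\ln(D_hx)$, again convex on $\{D_hx>0\}$. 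Summing, $F$ is strictly convex on $\mathcal{Q}$.

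The key steps, in order, are: (1) write down $F$ explicitly and verify by a direct computation of $\partial F/\partial x_i$ that critical points of $F$ in $\mathcal{Q}$ are exactly solutions of~\eqref{scheme-2nd-1}; (2) show $F$ is strictly convex on $\mathcal{Q}$ — the quadratic terms are convex, and each logarithmic/divided-difference term is convex as a composition of a convex scalar function with the linear map $x\mapsto D_hx$, with strict convexity coming from the inertial quadratic term; (3) establish coercivity and a barrier property: as $x$ approaches $\partial\mathcal{Q}$, i.e. as some $D_hx_{i-1/2}\downarrow 0$, the $-\ln(D_hx)$ contribution forces $F(x)\to+\infty$, so the infimum of $F$ over $\mathcal{Q}$ is attained at an interior point $x^{n+1}$ with all $D_hx^{n+1}_{i-1/2}>0$; (4) conclude uniqueness from strict convexity. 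I would also remark that the boundary conditions $x_0^{n+1},x_M^{n+1}$ are fixed, so the minimization is genuinely over the interior variables $x_1,\dots,x_{M-1}$, consistent with the definition of $\mathcal{Q}$.

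The main obstacle I anticipate is step (1)–(2) for the entropy term: one has to identify the correct antiderivative of the divided difference $\frac{\ln a - \ln b}{a-b}$ (as a function of $a=D_hx^{n+1}_{i-1/2}$ with $b=D_hx^n_{i-1/2}$ frozen) and confirm both that its $a$-derivative is exactly that divided difference and that it is convex in $a$; the removable singularity at $a=b$ must be handled (the divided difference extends smoothly, matching the $W^{n+1,k}$ and $R^{n+1,k}$ definitions in the scheme), and the convexity in $a$ is not entirely obvious and will need a short monotonicity argument, e.g. showing $\frac{\ln a-\ln b}{a-b}$ is decreasing in $a$. A secondary technical point is verifying that the weight $\frac{f_0(X_i)}{m(f_0(X)/S_h)_i^{m-1}}$ is strictly positive and bounded, which follows from the positivity of $f_0$ and the $\max(\cdot,\dt^2)$ truncation in $S_h$; this guarantees the inertial quadratic form is positive definite and hence that $F$ is strictly (not merely) convex, securing uniqueness. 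Once these analytic facts about the scalar convex functions are in hand, the existence/uniqueness argument is the standard convex-minimization-with-barrier pattern.
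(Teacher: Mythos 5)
Your plan is essentially the paper's proof: they, too, shift to $\hat{x}=x^{n+1}-X$, write the scheme as the Euler--Lagrange condition of a strictly convex functional $F=F_1+F_2+F_3+F_4$ on $\mathcal{Q}$, establish convexity of the entropy piece via the concavity of $\ln$ (your observation that $\frac{\ln a-\ln b}{a-b}$ is decreasing in $a$ is exactly the needed fact, rather than the $u\ln u-u$ heuristic), and use the $-\dt^2\ln(1+D_h\hat{x})$ term as the interior barrier, with uniqueness following from the strict convexity supplied by $F_1$. The one step you should not leave implicit is that the other terms remain bounded below uniformly on $\mathcal{Q}$ as some $D_hx^{n+1}_{i-\nicefrac12}\downarrow 0$ — the paper makes this quantitative by comparing the value of $F$ on the $\delta$-boundary of a compact set $\mathcal{Q}_\delta$ to its value at $\hat{x}=0$, showing the minimizer must be interior for $\delta$ small enough.
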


\begin{proof}
With an introduction $\hat{x} = x^{n+1} - X$, it is clear that \eqref{scheme-2nd-1} could be rewritten as
\begin{eqnarray}
   \frac{f_{0}(X_{i})}{m\big(\frac{f_0(X)}{ S_h (x^n , x^{n-1} ) } \big)_{i}^{m-1}}\cdot\frac{X_i + \hat{x}_{i}-x^{n}_{i}}{\dt}  &=&
  -d_{h}\Big[\Big( f_{0}(X) \frac{\ln ( 1+ D_h \hat{x}) - \ln ( D_h x^n) }{1 + D_h \hat{x} - D_h x^n} \Big) \nonumber
\\
  && - A_0 \dt D_h ( \hat{x} - x^n ) + \dt^2 ( \frac{1}{1+ D_h \hat{x}} - \frac{1}{D_h x^n} ) \Big]_{i} .  \label{scheme-2nd-alt-1}
\end{eqnarray}
Because of the fact that $\hat{x}_0 =\hat{x}_M=0$, we see that the solution of~\eqref{scheme-2nd-alt-1} is equivalent to a minimization of the following discrete functional:
\begin{eqnarray}
  F (\hat{x}) = \sum_{j=1}^4 F_j (\hat{x}) ,  \quad \mbox{with}  &&
  F_1 (\hat{x}) = \frac{1}{2 \dt} \Bigl\langle \frac{f_{0}(X)}{m\big(\frac{f_0(X)}{ S_h (x^n , x^{n-1} ) } \big)^{m-1}} , ( X + \hat{x} - x^n )^2 \Bigr\rangle  ,   \label{solvability-2-1}
\\
  &&
  F_2 (\hat{x}) = \Big\langle f_0 (X) G (D_h \hat{x} , D_h x^n ) , {\bf 1} \Big\rangle  ,
  \label{solvability-2-2}
\\
  &&
  F_3 (\hat{x}) = A_0 \dt \Bigl( \frac12 \| D_h \hat{x} \|_2^2
  - \langle D_h \hat{x}, D_h x^n \rangle \Bigr) ,   \label{solvability-2-3}
\\
  &&
  F_4 (\hat{x}) = \dt^2 \Bigl( - \langle \ln (1 + D_h \hat{x} , {\bf 1} \rangle
  + \langle D_h \hat{x}, \frac{1}{D_h x^n} \rangle \Bigr)  ,   \label{solvability-2-4}
\end{eqnarray}
in which $G (x,x_0)$ is given by the primitive function of $- \frac{\ln (1+x) - \ln x_0}{1+ x- x_0}$, for a fixed $x_0$:
\begin{equation}
  G (x, x_0) = \int_x^0  \frac{\ln (1+t) - \ln x_0}{1+t- x_0}  \, d t ,
  \quad  \mbox{for $x \ge -1$} .
  \label{solvability-2-2-b}
\end{equation}
The convexity of $F_1$, $F_3$ and $F_4$ (in terms of $\hat{x}$) is obvious. For the functional $F_2$, we have the following observation, for $x> -1$:
\begin{eqnarray}
  G'' (x, x_0) = \Bigl( - \frac{\ln (1+x) - \ln x_0}{1+x- x_0} \Bigr)'_x
  = \frac{-\frac{1}{1+x} (1+x- x_0) + ( \ln (1+x) - \ln x_0)}{(1+x- x_0)^2}
  \ge 0 ,  \label{solvability-2-2-c}
\end{eqnarray}
in which the convexity of $-\ln (1+x)$ has been used. This fact implies the convexity of $F_2$. Therefore, we conclude that $F$ is convex in terms of $\hat{x}$, provided that $D_h \hat{x} > -1$ at a point-wise level. Furthermore, $F$ is strictly convex, because of the strict convexity of $F_1$.

In the next step, we wish to prove that there exists a minimizer of $F$ at an interior point of $\mathcal{Q}$. To this end, consider the following closed domain: for a given $\delta > 0$,
	\begin{equation}
\mathcal{Q}_{\delta} := \left\{ X + \hat{x} \in \mathcal{Q} \ : \  1 + ( D_h \hat{x} )_{i+\nicefrac12} \ge \delta, \forall 0 \le i \le M-1 \right\} \subset \mathcal{Q} .
	\label{positivity-1}
	\end{equation}
Since $\mathcal{Q}_{\delta}$ is a compact and convex set in $\mathbb{R}^{M-1}$, there exists a (not necessarily unique) minimizer of $F$ over $\mathcal{Q}_{\delta}$. The key point of our positivity analysis is that such a minimizer could not occur on the boundary of $\mathcal{Q}_{\delta}$, if $\delta$ is small enough.

Assume a minimizer of $F$ over $\mathcal{Q}_{\delta}$, denote it by $\hat{x}^\star$, occurs at a boundary point. There is at least one grid point such that $1 + ( D_h \hat{x}^* )_{i_0 +\nicefrac12} = \delta$. Next we estimate the value of $F (\hat{x}^*)$. For the $F_1$ part, the following bound is available, for any $X + \hat{x} \in \mathcal{Q}$:
\begin{eqnarray}
  0 \le F_1 (\hat{x}) &=& \frac{1}{2 \dt} \Bigl\langle \frac{f_{0}(X)}{m\big(\frac{f_0(X)}{ S_h (x^n , x^{n-1} ) } \big)^{m-1}} , ( X + \hat{x} - x^n )^2 \Bigr\rangle  \nonumber
\\
  &\le&
   \frac{1}{2 m \dt} \Bigl( \tilde{C}_1^{|m-2|} ( \frac32 Q^{(n),2}
    + \frac12 Q^{(n-1),2} )^{m-1} \Bigr) := A^{(1)} ,
  \label{positivity-2-1}
\\
  \mbox{with} &&
  \tilde{C}_1 = \max \Bigl( \max_\Omega f_0 (X) , \frac{1}{\min_\Omega f_0 (X)} \Bigr) ,
  \nonumber
\end{eqnarray}
in which the assumption~\eqref{assumption-1} has been recalled, and we have made use of the following fact:
\begin{eqnarray}
  0 \le X + \hat{x} \le 1 ,  \, \, \, \mbox{so that} \, \, \,
   -1 \le X + \hat{x} - x^n \le 1 ,   \quad \mbox{at a point-wise level}.
   \label{positivity-2-2}
\end{eqnarray}
For the $F_2$ part, we observe that $G (x, x_0) \ge 0$ for $-1 \le x \le 0$, and
\begin{eqnarray}
  G (x, x_0) = - \int_0^x  \frac{\ln (1+t) - \ln x_0}{1+t- x_0}  \, d t
  \ge - \int_0^x  \frac{1}{1+t}  \, d t   = - \ln (1+x) ,
  \quad  \mbox{for $x \ge 0$} ,
  \label{positivity-3-1}
\end{eqnarray}
in which the convexity of $\ln (1+t)$ has been applied. Meanwhile, by the fact that $X _+ \hat{x} \in \mathcal{Q}$, we have the following observation:
\begin{equation}
  0 < 1 + ( D_h \hat{x} )_{i+\nicefrac12} \le \frac{1}{h} ,  \, \, \, \forall 0 \le i \le M-1 , \quad
  \mbox{since $0 \le x_i \le 1$, $0 \le x_{i+1} \le 1$} .
  \label{positivity-3-2}
\end{equation}
In turn, its substitution into~\eqref{positivity-3-1} implies that
\begin{eqnarray}
  G ( D_h \hat{x} , D_h x^n ) \ge - \ln \frac{1}{h} = \ln h ,
  \quad  \mbox{at any grid point} .
  \label{positivity-3-3}
\end{eqnarray}
As a consequence, we obtain a lower bound for $F_2$:
\begin{eqnarray}
  F_2 (\hat{x}^*) = \Big\langle f_0 (X) G (D_h \hat{x}^* , D_h x^n ) , {\bf 1} \Big\rangle
  \ge \| f_0 (X) \|_\infty \cdot \ln h ,
  \label{positivity-3-4}
\end{eqnarray}
The derivation for a lower bound of $F_3$ is straightforward:
\begin{eqnarray}
  F_3 (\hat{x}^*) \ge - A_0 \dt \langle D_h \hat{x}, D_h x^n \rangle
  \ge - \frac{A_0 \dt}{h^2}  ,
  \label{positivity-4-1}
\end{eqnarray}
in which the inequality~\eqref{positivity-3-2} has been applied. For the functional $F_4$, we see that the second part has the following lower bound:
\begin{eqnarray}
  \dt^2 \langle D_h \hat{x}, \frac{1}{D_h x^n} \rangle
  \ge - \dt^2 \cdot \frac{1}{h} \cdot \frac{1}{Q^{(n),1} }
  =  - \frac{\dt^2}{Q^{(n),1} h}  ,
  \label{positivity-5-1}
\end{eqnarray}
in which the inequality~\eqref{positivity-3-2} and the assumption~\eqref{assumption-1} have been used. For the first part of $F_4$, we recall that $1 + ( D_h \hat{x}^* )_{i_0 +\nicefrac12} = \delta$, and the following estimate is available:
\begin{eqnarray}
  - \langle \ln (1 + D_h \hat{x}^* , {\bf 1} \rangle
  &=& - h \Bigl(  \ln ( 1 + ( D_h \hat{x}^* )_{i_0 +\nicefrac12} )
  + \sum_{i \ne i_0}  \ln ( 1 + ( D_h \hat{x}^* )_{i +\nicefrac12} )  \Bigr)   \nonumber
\\
  &\ge&
  - h \Bigl(  \ln \delta + (M-1) \ln \frac{1}{h}  \Bigr)
  = h \Bigl(  \ln \frac{1}{\delta} + (M-1) \ln h  \Bigr)    \nonumber
\\
  &\ge&
  h \ln \frac{1}{\delta} + \ln h ,   \label{positivity-5-2}
\end{eqnarray}
in which the inequality~\eqref{positivity-3-2} has been applied in the second step, and we have use the fact that $h \cdot M =1$ in the last step. In turn, we get a lower bound for $F_4 (\hat{x}^*)$:
\begin{eqnarray}
  F_4 (\hat{x}^*) =  \dt^2 \Bigl( - \langle \ln (1 + D_h \hat{x} , {\bf 1} \rangle
  + \langle D_h \hat{x}, \frac{1}{D_h x^n} \rangle \Bigr)
  \ge \dt^2 h \ln \frac{1}{\delta}
  - \frac{\dt^2}{Q^{(n),1} h}  + \dt^2 \ln h .   \label{positivity-5-3}
\end{eqnarray}
Therefore, a combination of~\eqref{positivity-2-1},  \eqref{positivity-3-4}, \eqref{positivity-4-1} and ~\eqref{positivity-5-3} yields a lower bound for $F (\hat{x}^*)$:
\begin{eqnarray}
  F (\hat{x}^*)
  \ge \dt^2 h \ln \frac{1}{\delta} - A_{\dt,h} ,  \quad \mbox{with} \, \, \,
  A_{\dt,h} = \frac{A_0 \dt}{h^2} + \frac{\dt^2}{Q^{(n),1} h}
  - ( \dt^2 + \| f_0 (X) \|_\infty) \ln h .   \label{positivity-6}
\end{eqnarray}

Meanwhile, we observe that, by taking $\hat{x}^0 = \0$, so that $X + \hat{x}^0 \in \mathcal{Q}_{\delta}$, the following estimates are available:
\begin{eqnarray}
  0 \le F_1 (\hat{x}^0)
  \le  A^{(1)} ,   \, \, \, \mbox{(by~\eqref{positivity-2-1})} ,  \quad
  F_2 (\hat{x}^0) = 0 ,  \, \, \,  F_3 (\hat{x}^0) = 0 ,  \, \, \,
   F_4 (\hat{x}^0) = 0  ,   \label{positivity-7-1}
\end{eqnarray}
so that
\begin{eqnarray}
  0 \le F (\hat{x}^0) \le A^{(1)} .   \label{positivity-7-2}
\end{eqnarray}
We also notice that both $A_{\dt,h}$ and $A^{(1)}$ are independent of $\delta$. Consequently, by taking $\delta >0$ sufficiently small so that
\begin{eqnarray}
  \dt^2 h \ln \frac{1}{\delta} - A_{\dt,h}  > A^{(1)} ,   \quad \mbox{i.e.} \, \, \,
  0 < \delta < \exp \Bigl( - \frac{A_{\dt, h} + A^{(1)} }{\dt^2 h} \Bigr) .
    \label{positivity-7-3}
\end{eqnarray}
This yields a contradiction that $F$ takes a global minimum at $\hat{x}^\star$ over $\mathcal{Q}_{\delta}$, because $  F (\hat{x}^\star) > F (\hat{x}^0)$. As a result, the global minimum of $F$ over $\mathcal{Q}_{\delta}$ could only possibly occur at an interior point, with $\delta$ satisfying~\eqref{positivity-7-3}. We conclude that there must be a solution $\hat{x} \in \left( \mathcal{Q}_{\delta} \right)^{\mathrm{o}}$, the interior region of $\mathcal{Q}_{\delta}$, so that for all $\psi\in\mathcal{C}_{\rm per}$,
	\begin{equation}
0 =  d_s F (\hat{x}+s\psi) |_{s=0} ,
	\label{positive-8}
	\end{equation}
which is equivalent to the numerical solution of \eqref{scheme-2nd-alt-1}, provided that $\delta$ satisfies~\eqref{positivity-7-3}. The existence of a numerical solution of~\eqref{scheme-2nd-1}, with ``positive" gradient, is established. In addition, since $F$ is a strictly convex function over $\mathcal{Q}$, the uniqueness analysis for this numerical solution is straightforward.
\end{proof}

\section{Unconditional energy stability}\label{sec:ene}

\begin{thm}  \label{thm:energy stability}
The proposed numerical scheme~\eqref{scheme-2nd-1} is unconditionally energy stable: $E_h (x^{n+1}) \le E_h (x^n)$, with $E_h (x^k) := \langle f_{0}(X) \ln ( D_h x^n) ,  {\bf 1} \rangle$.   
\end{thm}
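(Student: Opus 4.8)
The plan is to carry out, at the fully discrete level, the energy-dissipation computation that underlies Lemma~\ref{lem-dissip}: pair the scheme with the discrete velocity, sum by parts on the mesh, and recover the energy increment from the modified Crank--Nicolson quotient. Throughout I keep the standing positivity in \eqref{assumption-1}, namely $D_h x^n>0$ pointwise, which propagates by induction from $x^0=X$ (so $D_h x^0\equiv 1$).

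\emph{Step 1 (input from solvability).} First I invoke Theorem~\ref{thm:solvability}: for the given $x^n,x^{n-1}$ the update $x^{n+1}$ exists, is unique, and satisfies $D_h x^{n+1}>0$ at every node. This is the ingredient the whole argument leans on, and I expect it — rather than the ensuing algebra — to be the real obstacle: the strict positivity of $D_h x^{n+1}$ is exactly what makes $\ln(D_h x^{n+1})$ (hence $E_h(x^{n+1})$) meaningful, makes the flux quotient $\frac{\ln(D_h x^{n+1})-\ln(D_h x^n)}{D_h x^{n+1}-D_h x^n}$ finite and positive (interpreted as $1/D_h x^{n+1}$ in the removable case, as in $R^{n+1,k}$), controls $1/D_h x^{n+1}$, and — together with $D_h x^n>0$ — keeps the denominators in the quadratic-stabilizer term bounded below.

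\emph{Step 2 (test function, summation by parts, three flux contributions).} Introduce the discrete velocity $v:=(x^{n+1}-x^n)/\dt\in\mathcal{E}_M$, which vanishes at both endpoints since $x_0^{n+1}=x_0^n$ and $x_M^{n+1}=x_M^n$. Pairing \eqref{scheme-2nd-1} with $v$ in the interior discrete $\ell^2$ inner product $\langle\cdot,\cdot\rangle$ (mesh weight $h$), the left side is $\langle\mu^n v,v\rangle=\sum_i h\,\mu^n_i v_i^2\ge 0$, with the strictly positive mobility $\mu^n_i:=f_0(X_i)/\bigl(m\,(f_0(X)/S_h(x^n,x^{n-1}))_i^{m-1}\bigr)$ (positive because $f_0>0$ and $S_h\ge\dt^2>0$). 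On the right side I use the discrete adjointness $\langle d_h\phi,\psi\rangle=-\langle\phi,D_h\psi\rangle$ — with no boundary terms, as $v$ vanishes at the endpoints — to get $\langle\Phi,D_h v\rangle$, where $\Phi\in\mathcal{C}_M$ is the bracketed flux and $D_h v=(D_h x^{n+1}-D_h x^n)/\dt$. The three pieces of $\Phi$ contract cleanly: the logarithmic piece $f_0(X)\,\frac{\ln(D_h x^{n+1})-\ln(D_h x^n)}{D_h x^{n+1}-D_h x^n}$ telescopes — this is the purpose of the modified Crank--Nicolson quotient — into $\tfrac1{\dt}\langle f_0(X),\ln(D_h x^{n+1})-\ln(D_h x^n)\rangle$, i.e.\ $-\tfrac1{\dt}$ times the increment of the discrete analogue $E_h$ of the Lemma~\ref{lem-dissip} energy; the linear stabilizer $-A_0\dt\,D_h(x^{n+1}-x^n)$ contributes $-A_0\|D_h(x^{n+1}-x^n)\|_2^2$; and the quadratic stabilizer $\dt^2\bigl(\tfrac1{D_h x^{n+1}}-\tfrac1{D_h x^n}\bigr)$ contributes $\dt\,\langle\tfrac1{D_h x^{n+1}}-\tfrac1{D_h x^n},\,D_h x^{n+1}-D_h x^n\rangle=-\dt\sum_i h\,\frac{\bigl((D_h x^{n+1})_{i-\frac12}-(D_h x^n)_{i-\frac12}\bigr)^2}{(D_h x^{n+1})_{i-\frac12}(D_h x^n)_{i-\frac12}}$, which is the discrete form of the convexity of $t\mapsto-\ln t$ already used in the proof of Theorem~\ref{thm:solvability}.

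\emph{Step 3 (assemble and conclude).} Collecting these and multiplying through by $\dt$ yields the discrete energy balance
\begin{align*}
  E_h(x^{n+1}) - E_h(x^n)
  &+ \frac1{\dt}\bigl\langle \mu^n (x^{n+1}-x^n),\, x^{n+1}-x^n\bigr\rangle
   + A_0\,\dt\,\|D_h(x^{n+1}-x^n)\|_2^2 \\
  &+ \dt^2 \sum_i h\,\frac{\bigl((D_h x^{n+1})_{i-\frac12}-(D_h x^n)_{i-\frac12}\bigr)^2}{(D_h x^{n+1})_{i-\frac12}(D_h x^n)_{i-\frac12}} = 0 .
\end{align*}
Since $\mu^n>0$, $A_0\ge 0$, and $D_h x^{n+1},D_h x^n>0$, each of the last three terms is nonnegative, hence $E_h(x^{n+1})-E_h(x^n)\le 0$; no smallness of $\dt$ or $h$ was used, so the stability is unconditional. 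Summing the identity over $n$ moreover gives $E_h(x^N)+\sum_{n=0}^{N-1}(\text{nonnegative dissipation})=E_h(x^0)$, the a priori control of $\|D_h(x^{n+1}-x^n)\|_2$ and of $D_h x^n$ that is reused in the convergence analysis of Section~\ref{sec:con}.
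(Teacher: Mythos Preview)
Your argument is essentially identical to the paper's: test the scheme with $x^{n+1}-x^n$ (your $v$ is just this divided by $\dt$), use summation by parts via the homogeneous endpoint values, let the secant quotient telescope into the energy increment, and observe that the mobility term, the $A_0$ stabilizer, and the $\dt^2$ term each have a definite sign thanks to $D_h x^{n+1},D_h x^n>0$ from Theorem~\ref{thm:solvability}. The only cosmetic difference is that the paper writes the computation as a single discrete identity \eqref{eng stab-1} and then bounds each term, whereas you organize it as ``three flux contributions''; the underlying steps and their order are the same.
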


\begin{proof}
Taking a discrete inner product with~\eqref{scheme-2nd-1} by $x^{n+1} - x^n$, making use of the summation by parts formula (because of the boundary condition $(x^{n+1} - x^n)_0 = (x^{n+1} - x^n)_M =0$), we get
\begin{eqnarray}
  &&
   \frac{1}{\dt} \Big\langle \frac{ ( S_h (x^n , x^{n-1} ) )^{m-1} }{m ( f_0(X) )^{m-2} } ,
   (x^{n+1} -x^n )^2 \Big\rangle
  - \Big\langle f_{0}(X) \frac{\ln (D_h x^{n+1}) - \ln ( D_h x^n) }{D_h x^{n+1} - D_h x^n} ,
  D_h x^{n+1} - D_h x^n \Bigr\rangle \nonumber
\\
  &&
  + A_0 \dt \| D_h ( x^{n+1} - x^n ) \|_2^2
  - \dt^2 \Big\langle \frac{1}{D_h x^{n+1}} - \frac{1}{D_h x^n} ,
   D_h x^{n+1} - D_h x^n \Big\rangle = 0 .   \label{eng stab-1}
\end{eqnarray}
The first term on the left hand side turns out to be non-negative, since $S_h (x^n , x^{n-1} ) >0$, $f_0 (X) >0$ at a point-wise level:
\begin{eqnarray}
  \Big\langle \frac{ ( S (x^n , x^{n-1} ) )^{m-1} }{m ( f_0(X) )^{m-2} } ,
   (x^{n+1} -x^n )^2 \Big\rangle  \ge 0 .  \label{eng stab-2}
\end{eqnarray}
The second term exactly gives the difference between the discrete energy values at time steps $t^{n+1}$ and $t^n$:
\begin{eqnarray}
  &&
  - \Big\langle f_{0}(X) \frac{\ln (D_h x^{n+1}) - \ln ( D_h x^n) }{D_h x^{n+1} - D_h x^n} ,
  D_h x^{n+1} - D_h x^n \Bigr\rangle   \nonumber
\\
  &=&
  - \Big\langle f_{0}(X) \ln (D_h x^{n+1}) ,  {\bf 1} \Bigr\rangle
  + \Big\langle f_{0}(X) \ln ( D_h x^n) ,  {\bf 1} \Bigr\rangle  
   = E_h (x^{n+1}) - E_h (x^n) . \label{eng stab-3}
\end{eqnarray}
The third term is clearly non-negative, and the last term turns out to be non-negative as well:
\begin{eqnarray}
  - \Big\langle \frac{1}{D_h x^{n+1}} - \frac{1}{D_h x^n} ,
   D_h x^{n+1} - D_h x^n \Big\rangle
   = \Big\langle \frac{1}{D_h x^{n+1} D_h x^n} ,
   ( D_h x^{n+1} - D_h x^n )^2 \Big\rangle  \ge 0 ,   \label{eng stab-4}
\end{eqnarray}
in which we have made use of the unique solvability result, $D_h x^{n+1} > 0$, $D_h x^n > 0$, at the point-wise level, as given by Theorem~\ref{thm:solvability}. As a consequence, a substitution of~\eqref{eng stab-2}-\eqref{eng stab-4} into \eqref{eng stab-1} reveals an unconditional energy stability of the numerical scheme:
\begin{eqnarray}
   E_h (x^{n+1}) - E_h (x^n)  \le - A_0 \dt \| D_h ( x^{n+1} - x^n ) \|_2^2  \le 0 ,  \quad
   \mbox{so that $E_h (x^{n+1}) \le E_h (x^n)$} .
\end{eqnarray}
This completes the proof of Theorem~\ref{thm:energy stability}.
\end{proof}

\section{Optimal rate convergence analysis}\label{sec:con}

Now we proceed into the convergence analysis. Let $x_e$ be the exact solution for the PME equation~\eqref{eqtra}-\eqref{eqtraini}. With sufficiently regular initial data, we could assume that the exact solution has regularity of class $\mathcal{R}$:
\begin{equation}
x_e \in \mathcal{R} := H^6 \left(0,T; C (\Omega)\right) \cap H^4 \left(0,T; C^2 (\Omega)\right) \cap L^\infty \left(0,T; C^6 (\Omega)\right).
	\label{assumption:regularity.1}
	\end{equation}
In addition, we assume that the following separation property is valid for the exact solution, in terms of its gradient:
\begin{equation}
  \partial_X x_e \ge \epsilon_0 ,  \quad \mbox{for $\epsilon_0 > 0$} ,
    \label{assumption:separation}
	\end{equation}
at a point-wise level. The following theorem is the convergence result of the proposed scheme.

\begin{thm} \label{thm:convergence}
Given initial data $x_e (\, \cdot \, ,t=0) \in C^6 (\Omega)$, suppose the exact solution for
the PME equation~\eqref{eqtra}-\eqref{eqtraini} is of regularity class $\mathcal{R}$. Define the numerical error function as $e^n_j = (x_e )^n_j - x^n_j$, at a point-wise level. Then, provided $\dt$ and $h$ are sufficiently small, and under the linear refinement requirement $C_1 h \le \dt \le C_2 h$, we have
	\begin{equation}
\| e^n \|_2 +  \Bigl( \dt   \sum_{m=0}^{n-1} \| \frac12 D_h (e^m + e^{m+1}) \|_2^2 \Bigr)^{1/2}  \le C ( \dt^2 + h^2 ),
	\label{convergence-0}
	\end{equation}
for all positive integers $n$, such that $t_n=n\dt \le T$, where $C>0$ is independent of $n$, $\dt$, and $h$.
	\end{thm}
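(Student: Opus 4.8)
The plan is to follow the two‑step error estimation strategy outlined in the introduction, combining a higher‑order consistency expansion with a rough‑then‑refined argument. First I would construct a supplementary approximate solution $\hat{x}$ by adding correction terms to the exact solution: $\hat{x} = x_e + \dt^2 x^{(2)}_{\dt} + h^2 x^{(2)}_h + \dt^3 x^{(3)}_{\dt} + \cdots$, where each correction solves an appropriate linear auxiliary problem driven by the leading truncation error of the modified Crank--Nicolson discretization of \eqref{eqtra}. The point of this higher‑order expansion is that, although the raw scheme is only formally second order, the constructed $\hat{x}$ satisfies \eqref{scheme-2nd-1} up to a residual of order $\dt^4 + h^4$ (or high enough to close the argument); this extra margin is what will later be converted into an $L^\infty$‑type control. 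Here I would use the regularity class $\mathcal{R}$ in \eqref{assumption:regularity.1} to guarantee the correction functions are well defined and bounded, and the separation property \eqref{assumption:separation} to ensure $D_h \hat{x}$ stays uniformly bounded away from zero so that all the nonlinear terms $\ln(D_h\hat{x})$, $1/(D_h\hat{x})$, and the divided difference $\frac{\ln(D_h\hat x^{n+1})-\ln(D_h x^n)}{D_h\hat x^{n+1}-D_h x^n}$ are smooth with controlled derivatives.

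Next I would set up the error equation for $\tilde{e} = \hat{x} - x$ (the difference between the constructed approximation and the numerical solution), subtract the scheme from the consistency identity, and organize the nonlinear differences using mean‑value / divided‑difference expansions. The \textbf{rough estimate} comes first: assuming an a priori bound of the form $\|\tilde e^k\|_2 + \dt^{1/2}\|D_h\tilde e^k\|_2 \le 1$ (or $\le \dt^{7/4}+h^{7/4}$, some weaker power), one performs the energy estimate by taking the discrete inner product of the error equation with $\tilde e^{n+1}-\tilde e^n$ or with $\frac12(\tilde e^n + \tilde e^{n+1})$, mirroring the energy‑stability computation in the proof of Theorem~\ref{thm:energy stability}. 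The $F_1$‑type term gives control of $\|\tilde e\|_2$, the logarithmic term gives a nonnegative contribution controlling $\|D_h\tilde e\|_2$ after expansion (using convexity of $-\ln$, as in \eqref{solvability-2-2-c}), the $A_0\dt$ regularization term contributes $A_0\dt\|D_h(\tilde e^{n+1}-\tilde e^n)\|_2^2$, and the $\dt^2$ term is again nonnegative by the identity in \eqref{eng stab-4}. Summing in time and applying discrete Gronwall yields a preliminary bound $\|\tilde e^n\|_2 + (\dt\sum\|\tfrac12 D_h(\tilde e^m+\tilde e^{m+1})\|_2^2)^{1/2} \le C(\dt^2+h^2)$ — but crucially, since the consistency residual was made of order $\dt^4+h^4$, the rough bound actually controls $\|\tilde e^n\|_\infty$ and $\|D_h\tilde e^n\|_\infty$ (via inverse inequalities, using the linear refinement $C_1 h \le \dt \le C_2 h$) by something like $C(\dt^{2}+h^{2})\cdot h^{-1/2} = O(\dt^{3/2})$, which tends to zero. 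This recovers the a priori assumption, and in particular forces $D_h x^{n+1}$ to lie in a fixed neighborhood of $D_h x_e$, hence uniformly positive and bounded — this is the discrete $W^{1,\infty}$ bound promised in the introduction, and it is also what legitimizes $S_h(x^n,x^{n-1}) = \widetilde D_h(\tfrac32 x^n - \tfrac12 x^{n-1})$ (the $\max$ with $\dt^2$ being inactive).

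Finally, with the $W^{1,\infty}$ bound in hand, I would redo the energy estimate as a \textbf{refined estimate}, now treating all nonlinear coefficients (such as $1/(D_h x^{n+1}D_h x^n)$, the Lipschitz constants of $\ln$ and of the divided difference, and the ``$S_h$ versus $f_0/(\partial_X x_e)^{m-1}$'' mismatch which itself involves $\widetilde D_h$ of a second‑order extrapolation of $x$) as bounded quantities with explicit bounds depending only on $\epsilon_0$ and $\|x_e\|_{\mathcal R}$, not on the error. The nonlinear error terms then become genuinely quadratic or higher and are absorbed into the left‑hand side or dominated via Young's inequality by the dissipation terms; the linear residual of order $\dt^2+h^2$ (from the original truncation error, since for the refined step I would keep the expansion only to the needed order) drives the estimate, and a second application of discrete Gronwall gives \eqref{convergence-0} for $e^n = \hat x^n - x^n + (x_e^n - \hat x^n)$, the last piece being $O(\dt^2+h^2)$ by construction of the corrections. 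The main obstacle I anticipate is the careful bookkeeping of the divided‑difference nonlinearity $\frac{\ln(D_h x^{n+1})-\ln(D_h x^n)}{D_h x^{n+1}-D_h x^n}$ together with the extrapolated, truncated coefficient $S_h(x^n,x^{n-1})$: showing that the discrete energy estimate still produces a strictly positive definite form in $D_h\tilde e$ (so that the logarithmic dissipation is not destroyed by the $S_h$ mismatch or the $1/D_hx$ term) requires a delicate expansion, and getting the consistency order of the $S_h$ term right — it is an extrapolation in time paired with the non‑standard one‑sided $\widetilde D_h$ operator at the boundary — is where most of the technical effort will go. The linear refinement condition $C_1 h\le\dt\le C_2 h$ is essential precisely at the step converting the rough $L^2$ bound into an $L^\infty$ bound, since there is no linear diffusion term in \eqref{eqtra} to supply extra regularity.
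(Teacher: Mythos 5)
Your overall strategy matches the paper's: construct a corrected profile $W=x_e+h^2x_{h,1}+\dt^2x_{\dt,1}+\dt^3x_{\dt,2}$ with $O(\dt^4+h^4)$ consistency, set $\tilde x=W-x$, make an a priori assumption, do a rough estimate to obtain a discrete $W^{1,\infty}$ bound on $x^{n+1}$ by inverse inequality, and then a refined estimate with discrete Gronwall, recovering the a priori bound by induction.  The two places where the proposal has genuine gaps are these.

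First, the proposal omits what the paper calls the ``further rough error estimate'' (a separate pass that upgrades the rough $L^2$ estimate for $\tilde x^{n+1}$ to a rough $L^2$ estimate for $D_h\tilde x^{n+1}$, hence to a $W_h^{1,\infty}$ bound on the \emph{second} temporal difference $D_t^2 x^n$).  This is not decorative: in the refined step the expansion of the divided-difference nonlinearity produces a telescoping sum $\dt\langle\gamma^{n+\nicefrac12},(D_h\tilde x^{n+1})^2\rangle-\dt\langle\gamma^{n-\nicefrac12},(D_h\tilde x^{n})^2\rangle$, and controlling $\gamma^{n+\nicefrac12}-\gamma^{n-\nicefrac12}=O(\dt)$ requires precisely a uniform bound on $D_h(D_t^2 x^n)$.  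Without that extra pass, the Gronwall step in the refined estimate does not close.

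Second, the order bookkeeping in the rough step is off in a way that masks the real reason the expansion must go to $O(\dt^4+h^4)$.  You would be right that the consistency margin is ``converted into $L^\infty$ control,'' but the conversion is not the benign $h^{-1/2}$ or $h^{-3/2}$ inverse-inequality loss you cite; there is an intrinsic extra loss of one full order inside the rough energy estimate itself.  The nonlinear divided-difference term contributes a diffusion coefficient with only an $O(h)$ lower bound (this is the $\xi^{(3)}\ge\frac{1}{2\tilde C^*}h$ estimate in the paper's Lemma on ${\cal NLE}$), so absorbing the cross term by Cauchy produces a factor $\dt/h$ on $\|D_h\tilde x^n\|_2^2$; under $C_1h\le\dt\le C_2h$ this degrades a consistency of order $\dt^4+h^4$ down to a rough output of order $\dt^3+h^3$, not $\dt^2+h^2$.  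That one-order loss is exactly why you cannot get away with an expansion stopping at $O(\dt^3+h^3)$, and it is why the test function for the rough pass is $2\tilde x^{n+1}$ (to extract an $L^2$ bound directly) while the refined pass uses $\tilde x^{n+1}+\tilde x^n$; testing against $\tilde x^{n+1}-\tilde x^n$, one of your suggested alternatives, gives the dissipation structure but not the needed $\|\tilde x\|_2$ control.
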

	
\subsection{Higher order consistency analysis of~\eqref{scheme-2nd-1}:  asymptotic expansion of the numerical solution}
\label{subsec-consistency}

By consistency, the exact solution $x_e$ solves the discrete equation~\eqref{scheme-2nd-1} with second order accuracy in both time and space. Meanwhile, it is observed that this leading local truncation error will not be enough to recover an a-priori $W_h^{1,\infty}$ bound for the temporal derivative of the numerical solution, which is needed in the nonlinear error estimate.
To remedy this, we use a higher order consistency analysis, via a perturbation argument, to recover such a bound in later analysis. In more details, we need to construct supplementary fields, $x_{h,1}$, $x_{\dt,1}$, $x_{\dt, 2}$, and $W$, satisfying
	\begin{equation}
 W = x_e + h^2 x_{h,1} + \dt^2 x_{\dt,1} + \dt^3 x_{\dt, 2} ,
	\label{consistency-1}
	\end{equation}
so that a higher $O (\dt^4 + h^4)$ consistency is satisfied with the given numerical scheme~\eqref{scheme-2nd-1}.  The constructed fields $x_{h,1}$, $x_{\dt,1}$, $x_{\dt, 2}$, which will be obtained using a perturbation expansion, will depend solely on the exact solution $x_e$.

The following truncation error analysis for the temporal discretization can be derived by using a straightforward Taylor expansion
\begin{eqnarray}
   \frac{ (S_e (x_e^n , x_e^{n-1} )  )^{m-1} }{m ( f_0(X) )^{m-2} }
   \cdot \frac{x_e^{n+1}-x_e^{n}}{\dt}  &=&
  - \partial_X \Big[\Big( f_{0}(X) \frac{\ln (\partial_X x_e^{n+1}) - \ln ( \partial_X x_e^n) }{\partial_X ( x_e^{n+1} - x_e^n ) } \Big) \nonumber
\\
  && - A_0 \dt \partial_X ( x_e^{n+1} - x_e^n ) + \dt^2 ( \frac{1}{ \partial_X x_e^{n+1}} - \frac{1}{ \partial_X x_e^n} ) \Big] \nonumber
\\
  &&
  + \dt^2 \g_1^{(0)} + \dt^3 \g_1^{(1)} + O (\dt^4) ,  \label{consistency-2}
\\
  &&
  \mbox{with} \quad
  S_e (x_e^n, x_e^{n-1}) = \partial_X ( \frac32 x_e^n - \frac12 x_e^{n-1} )  .
  \nonumber
\end{eqnarray}
Here the spatial function $\g_j^{(0)}$ is smooth enough in the sense that its derivatives are bounded.

The temporal correction function $x_{\dt,1}$ is given by solving the following equation:
\begin{eqnarray}
  &&
   \frac{ (\partial_X x_e)^{m-1} }{m ( f_0(X) )^{m-2} }
   \partial_t x_{\dt,1}
   +  \frac{ (m-1) (\partial_X x_e)^{m-2} \partial_X x_{\dt,1} }{m ( f_0(X) )^{m-2} }
   \partial_t x_e \nonumber
\\
   &&  \quad
  = - \partial_X \Big( - f_{0}(X) \frac{1}{(\partial_X x_e)^2} \partial_X x_{\dt, 1} \Big)
  - \g_1^{(0)} ,  \label{consistency-3-1}
\\
  &&
  x_{\dt,1}  (0) = x_{\dt,1} (1) =0 ,  \quad
  x_{\dt,1}  (t=0) = 0 .   \label{consistency-3-2}
\end{eqnarray}
Existence of a solution of the above linear PDE is a straightforward, and the solution depends only on the exact solution $x_e$. In addition, the derivatives of $x_{\dt, 1}$ of various orders are bounded. Of course, an application of the semi-implicit discretization to~\eqref{consistency-3-1}-\eqref{consistency-3-2} implies that
\begin{eqnarray}
  &&
   \frac{ (S_e (x_e^n , x_e^{n-1} )  )^{m-1}  }{m ( f_0(X) )^{m-2} }
   \cdot \frac{x_{\dt,1}^{n+1} - x_{\dt, 1}^n}{\dt}
   +  \frac{ (m-1) (S_e (x_e^n , x_e^{n-1} )  )^{m-2}
   S_e (x_{\dt, 1}^n , x_{\dt, 1}^{n-1} )  }{m ( f_0(X) )^{m-2} }
   \cdot \frac{x_e^{n+1} - x_e^n}{\dt}  \nonumber
\\
   &&  \quad
  = - \partial_X \Big( - f_{0}(X) \frac{1}{(\partial_X (\frac12 x_e^{n+1} + x_e^n))^2}
   \cdot \frac12 \partial_X ( x_{\dt, 1}^{n+1} + x_{\dt,1}^n)  \Big)
  - (\g_1^{(0)} )^{n+1/2} + O (\dt^2) ,  \label{consistency-4-1}
\\
  &&
  \mbox{with} \quad
  S_e (x_{\dt, 1}^n, x_{\dt, 1}^{n-1}) = \partial_X ( \frac32 x_{\dt, 1}^n
  - \frac12 x_{\dt, 1}^{n-1} )  .   \label{consistency-4-2}
\end{eqnarray}
Therefore, a combination of~\eqref{consistency-2} and \eqref{consistency-4-1} leads to the third order temporal truncation error for $W_1 = x_e + \dt^2x_{\dt,1}$:
\begin{eqnarray}
   \frac{ (S_e (W_1^n , W_1^{n-1} )  )^{m-1} }{m ( f_0(X) )^{m-2} }
   \cdot \frac{W_1^{n+1} - W_1^{n}}{\dt}  &=&
  - \partial_X \Big[\Big( f_{0}(X) \frac{\ln (\partial_X W_1^{n+1}) - \ln ( \partial_X W_1^n) }{\partial_X ( W_1^{n+1} - W_1^n ) } \Big) \nonumber
\\
  && - A_0 \dt \partial_X ( W_1^{n+1} - W_1^n )
  + \dt^2 ( \frac{1}{ \partial_X W_1^{n+1}} - \frac{1}{ \partial_X W_1^n} ) \Big] \nonumber
\\
  &&
  + \dt^3 \g_1^{(1)} + O (\dt^4) ,  \label{consistency-5-1}
\\
  &&
  \mbox{with} \quad
  S_e (W_1^n, W_1^{n-1}) = \partial_X ( \frac32 W_1^n - \frac12 W_1^{n-1} )  .
  \nonumber
\end{eqnarray}
In the derivation of~\eqref{consistency-5-1}, the following linearized expansions have been utilized:
\begin{eqnarray}
  &&
  \frac{W_1^{n+1} - W_1^n}{\dt} = \frac{x_e^{n+1} - x_e^n}{\dt}  + O (\dt^2) ,
  \label{consistency-6-1}
\\
  &&
   (S_e (x_e^n , x_e^{n-1} )  )^{m-1}
   +   (m-1) (S_e (x_e^n , x_e^{n-1} )  )^{m-2}
   S_e (x_{\dt, 1}^n , x_{\dt, 1}^{n-1} )   \cdot \dt^2 x_{\dt,1}  \nonumber
\\
  &&   \quad
   = S_e (W_1^n, W_1^{n-1})  + O (\dt^4) ,  \label{consistency-6-2}
\\
  &&
  \frac{\ln (\partial_X x_e^{n+1}) - \ln ( \partial_X x_e^n) }{\partial_X ( x_e^{n+1} - x_e^n ) }
  - \dt^2 \frac{1}{(\partial_X (\frac12 x_e^{n+1} + x_e^n))^2}
   \cdot \frac12 \partial_X ( x_{\dt, 1}^{n+1} + x_{\dt,1}^n)    \nonumber
\\
  &&  \quad
  = \frac{\ln (\partial_X W_1^{n+1}) - \ln ( \partial_X W_1^n) }{\partial_X ( W_1^{n+1} - W_1^n ) }  + O (\dt^4) .   \label{consistency-6-3}
\end{eqnarray}

  Similarly, the temporal correction function $x_{\dt,2}$ is given by solving the following equation:
\begin{eqnarray}
  &&
   \frac{ (\partial_X x_e)^{m-1} }{m ( f_0(X) )^{m-2} }
   \partial_t x_{\dt,2}
   +  \frac{ (m-1) (\partial_X x_e)^{m-2} \partial_X x_{\dt,2} }{m ( f_0(X) )^{m-2} }
   \partial_t x_e \nonumber
\\
   &&  \quad
  = - \partial_X \Big( - f_{0}(X) \frac{1}{(\partial_X x_e)^2} \partial_X x_{\dt, 2} \Big)
  - \g_1^{(1)} ,  \label{consistency-7-1}
\\
  &&
  x_{\dt,2}  (0) = x_{\dt,2} (1) =0 ,  \quad
  x_{\dt,2}  (t=0) = 0 ,   \label{consistency-7-2}
\end{eqnarray}
and the solution depends only on the exact solution $x_e$, with derivatives of various orders stay bounded. In turn, an application of the semi-implicit discretization to~\eqref{consistency-7-1}-\eqref{consistency-7-2} implies that
\begin{eqnarray}
  &&
   \frac{ (S_e (x_e^n , x_e^{n-1} )  )^{m-1}  }{m ( f_0(X) )^{m-2} }
   \cdot \frac{x_{\dt,2}^{n+1} - x_{\dt, 2}^n}{\dt}
   +  \frac{ (m-1) (S_e (x_e^n , x_e^{n-1} )  )^{m-2}
   S_e (x_{\dt, 2}^n , x_{\dt, 2}^{n-1} )  }{m ( f_0(X) )^{m-2} }
   \cdot \frac{x_e^{n+1} - x_e^n}{\dt}  \nonumber
\\
   &&  \quad
  = - \partial_X \Big( - f_{0}(X) \frac{1}{(\partial_X (\frac12 x_e^{n+1} + x_e^n))^2}
   \cdot \frac12 \partial_X ( x_{\dt, 2}^{n+1} + x_{\dt,2}^n)  \Big)
  - (\g_1^{(1)} )^{n+1/2} + O (\dt^2) ,  \label{consistency-8-1}
\\
  &&
  \mbox{with} \quad
  S_e (x_{\dt, 2}^n, x_{\dt, 2}^{n-1}) = \partial_X ( \frac32 x_{\dt, 2}^n
  - \frac12 x_{\dt, 2}^{n-1} )  .   \label{consistency-8-2}
\end{eqnarray}
Subsequently, a combination of~\eqref{consistency-5-1} and \eqref{consistency-8-1} yields the fourth order temporal truncation error for $W_2 = W_1 + \dt^3 x_{\dt,2} = x_e + \dt^2 x_{\dt,1} + \dt^3 x_{\dt,2}$:
\begin{eqnarray}
   \frac{ (S_e (W_2^n , W_2^{n-1} )  )^{m-1} }{m ( f_0(X) )^{m-2} }
   \cdot \frac{W_2^{n+1} - W_2^{n}}{\dt}  &=&
  - \partial_X \Big[\Big( f_{0}(X) \frac{\ln (\partial_X W_2^{n+1}) - \ln ( \partial_X W_2^n) }{\partial_X ( W_2^{n+1} - W_2^n ) } \Big) \nonumber
\\
  && - A_0 \dt \partial_X ( W_2^{n+1} - W_2^n )
  + \dt^2 ( \frac{1}{ \partial_X W_2^{n+1}} - \frac{1}{ \partial_X W_2^n} ) \Big] \nonumber
\\
  &&
  + O (\dt^4) ,  \label{consistency-9-1}
\\
  &&
  \mbox{with} \quad
  S_e (W_2^n, W_2^{n-1}) = \partial_X ( \frac32 W_2^n - \frac12 W_2^{n-1} )  ,
  \nonumber
\end{eqnarray}
in which the linearized expansions have been extensively applied.

Next, we construct the spatial correction term $x_{h,1}$ to upgrade the spatial accuracy order. The following truncation error analysis for the spatial discretization can be obtained by using a straightforward Taylor expansion for the constructed profile $W_2$, and exact solution $x_e$:
\begin{eqnarray}
   \frac{ (S_h (W_2^n , W_2^{n-1} )  )^{m-1} }{m ( f_0(X) )^{m-2} }
   \cdot \frac{W_2^{n+1} - W_2^{n}}{\dt}  &=&
  - d_h \Big[\Big( f_{0}(X) \frac{\ln (D_h W_2^{n+1}) - \ln (D_h W_2^n) }{D_h ( W_2^{n+1} - W_2^n ) } \Big) \nonumber
\\
  && - A_0 \dt D_h ( W_2^{n+1} - W_2^n )
  + \dt^2 ( \frac{1}{ D_h W_2^{n+1}} - \frac{1}{ D_h W_2^n} ) \Big] \nonumber
\\
  &&
  + h^2 \h^{(0)} + O (h^4) + O (\dt^4) ,  \label{consistency-10-1}
\\
  &&
  \mbox{with} \quad
  S_h (W_2^n, W_2^{n-1}) = \widetilde{D}_h ( \frac32 W_2^n - \frac12 W_2^{n-1} )  .
  \nonumber
\end{eqnarray}
The spatially discrete function $\h^{(0)}$ is smooth enough in the sense that its discrete derivatives are bounded. We also notice that there is no $O(h^3)$ truncation error term, due to the fact that the  centered difference used in the spatial discretization gives local truncation errors with only even order terms,  $O(h^2)$, $O(h^4)$, etc. Subsequently, the spatial correction function $x_{h, 1}$ is given by solving the following linear PDE:
\begin{eqnarray}
  &&
   \frac{ (\partial_X x_e)^{m-1} }{m ( f_0(X) )^{m-2} }
   \partial_t x_{h,1}
   +  \frac{ (m-1) (\partial_X x_e)^{m-2} \partial_X x_{h,1} }{m ( f_0(X) )^{m-2} }
   \partial_t x_e \nonumber
\\
   &&  \quad
  = - \partial_X \Big( - f_{0}(X) \frac{1}{(\partial_X x_e)^2} \partial_X x_{h,1} \Big)
  - \h^{(0)} ,  \label{consistency-11-1}
\\
  &&
  x_{h,1}  (0) = x_{h,1} (1) =0 ,  \quad
  x_{h,1}  (t=0) = 0 ,   \label{consistency-11-2}
\end{eqnarray}
and the solution depends only on the exact solution $x_e$, with  the  divided differences of various orders stay bounded. In turn, an application of a full discretization to~\eqref{consistency-11-1} implies that
\begin{eqnarray}
  &&
   \frac{ (S_h (x_e^n , x_e^{n-1} )  )^{m-1}  }{m ( f_0(X) )^{m-2} }
   \cdot \frac{x_{h,1}^{n+1} - x_{h,1}^n}{\dt}
   +  \frac{ (m-1) (S_h (x_e^n , x_e^{n-1} )  )^{m-2}
   S_h (x_{h,1}^n , x_{h,1}^{n-1} )  }{m ( f_0(X) )^{m-2} }
   \cdot \frac{x_e^{n+1} - x_e^n}{\dt}  \nonumber
\\
   &&  \quad
  = - d_h \Big( - f_{0}(X) \frac{1}{(D_h (\frac12 x_e^{n+1} + x_e^n))^2}
   \cdot \frac12 D_h ( x_{h,1}^{n+1} + x_{h,1}^n)  \Big)
  - (\h^{(0)} )^{n+1/2} + O (h^2) ,  \label{consistency-12-1}
\\
  &&
  \mbox{with} \quad
  S_h (x_{h,1}^n, x_{h,1}^{n-1}) = \widetilde{D}_h ( \frac32 x_{h,1}^n
  - \frac12 x_{h,1}^{n-1} )  .   \label{consistency-12-2}
\end{eqnarray}
Finally, a combination of~\eqref{consistency-9-1} and \eqref{consistency-12-1} yields the fourth order temporal truncation error for $W = W_2 + h^2 x_{h,1} = x_e + h^2 x_{h,1} + \dt^2 x_{\dt,1} + \dt^3 x_{\dt,2}$ (as given by~\eqref{consistency-1}):
\begin{eqnarray}
   \frac{ (S_h (W^n , W^{n-1} )  )^{m-1} }{m ( f_0(X) )^{m-2} }
   \cdot \frac{W^{n+1} - W^{n}}{\dt}  &=&
  - d_h \Big[\Big( f_{0}(X) \frac{\ln (D_h W^{n+1}) - \ln ( D_h W^n) }{D_h ( W^{n+1} - W^n ) } \Big) \nonumber
\\
  && - A_0 \dt D_h ( W^{n+1} - W^n )
  + \dt^2 ( \frac{1}{ D_h W^{n+1}} - \frac{1}{D_h W^n} ) \Big] \nonumber
\\
  &&
  + \tau^n ,  \quad \mbox{with $\| \tau^n \|_2 \le C (\dt^4 + h^4)$},
  \label{consistency-13-1}
\\
  &&
  \mbox{and} \, \, \,
  S_h (W^n, W^{n-1}) = \widetilde{D}_h ( \frac32 W^n - \frac12 W^{n-1} )  .
  \nonumber
\end{eqnarray}
Again, the linearized expansions have been extensively applied.

\begin{rem}
Since the temporal and spatial correction functions, namely $x_{\dt, 1}$, $x_{\dt, 2}$ and $x_{h,1}$, are bounded, we recall the separation property~\eqref{assumption:separation} for the exact solution, and obtain a similar property for the constructed profile $W$:
\begin{equation}
  D_h W  \ge \epsilon_0^* , \quad \mbox{for $\epsilon_0^* > 0$} .
    \label{assumption:separation-2}
	\end{equation}
Such a uniform bound will be used in the convergence analysis.

For the the constructed profile $W$, we also assume its discrete $W^{2,\infty}$ bound, as well as the $W^{1,\infty}$ bound for its temporal derivatives:
\begin{eqnarray}
  \hspace {-0.4in} &&
  \| D_h W \|_\infty + \| D_h^2 W \|_\infty \le C^* ,  \quad \
  \nrm{ D_t W^n }_\infty + \nrm{ D_h D_t W^n}_\infty
  + \nrm{ D_h ( D_t^2 W^n) }_\infty \le C^* ,
    \label{assumption:constructed bound}
\\
  \hspace{-0.4in} &&
  \mbox{with} \quad D_t W^n := \frac{W^{n+1} - W^n}{\dt} ,  \, \, \,
  D_t^2 W^n = \frac{W^{n+1} - 2 W^n + W^{n-1}}{\dt^2}  ,   \nonumber
	\end{eqnarray}
which comes from the regularity of the exact solution $x_e$ and the correction functions.
\end{rem}

\begin{rem}
The aim for such a higher order asymptotic expansion and truncation error estimate is to justify an a-priori $W_h^{1,\infty}$ bound of the numerical solution, which is needed to obtain the phase separation property, similarly formulated as~\eqref{assumption:separation-2} for the constructed approximate solution. In addition, a discrete $W_h^{1,\infty}$ bound for the temporal derivatives of the numerical solution is also needed in the nonlinear analysis, which turns out to be the key reason to derive a fourth order consistency estimate for the constructed solution.
\end{rem}

\subsection{A preliminary rough error estimate}

Instead of a direct analysis for the error function defined as $e^m = x_e^m - x^m$, we introduce an alternate numerical error function:
	\begin{equation}
\tilde{x}^m := W^m - x^m .
	\label{error function-2}
	\end{equation}
The advantage of such a numerical error function is associated with its higher order accuracy, which comes from the higher order consistency estimate~\eqref{consistency-13-1}. Moreover, the following notations are introduced, for the convenience of the analysis presented later.
\begin{eqnarray}
  &&
  x^{n+\nicefrac12} = \frac12 (x^{n+1} + x^n) ,  \, \, \,
  W^{n+\nicefrac12} = \frac12 (W^{n+1} + W^n)  , \label{notation-1}
\\
  &&
  \breve{x}^{n+\nicefrac12} = \frac32 x^n - \frac12 x^{n-1} ,  \, \, \,
  \breve{W}^{n+\nicefrac12} = \frac32 W^n - \frac12 W^{n-1} ,  \label{notation-2}
\\
  &&
  \tilde{x}^{n+\nicefrac12} =  W^{n+\nicefrac12} - x^{n+\nicefrac12}
  = \frac12 (\tilde{x}^{n+1} + \tilde{x}^n) ,  \, \, \,
 \breve{\tilde{x}}^{n+\nicefrac12}
 = \breve{W}^{n+\nicefrac12} - \breve{x}^{n+\nicefrac12}
 = \frac32 \tilde{x}^n - \frac12 \tilde{x}^{n-1} .  \label{error function-2}
\end{eqnarray}

In turn, subtracting the numerical scheme~\eqref{scheme-2nd-1} from the consistency estimate~\eqref{consistency-13-1} yields
\begin{eqnarray}
  &&
   \frac{ (S_h (x^n , x^{n-1} )  )^{m-1} }{m ( f_0(X) )^{m-2} }
   \cdot \frac{\tilde{x}^{n+1} - \tilde{x}^{n}}{\dt}
   + \frac{ (S_h (W^n , W^{n-1} )  )^{m-1} - (S_h (x^n , x^{n-1} )  )^{m-1} }{m ( f_0(X) )^{m-2} }  \cdot \frac{W^{n+1} - W^{n}}{\dt}    \nonumber
\\
   &=&
  - d_h \Big[ f_{0}(X) \Big(
    \frac{\ln (D_h W^{n+1}) - \ln ( D_h W^n) }{D_h ( W^{n+1} - W^n ) }
  - \frac{\ln (D_h x^{n+1}) - \ln ( D_h x^n) }{D_h ( x^{n+1} - x^n ) }  \Big) \nonumber
\\
  && - A_0 \dt D_h ( \tilde{x}^{n+1} - \tilde{x}^n )
  - \dt^2 ( \frac{D_h \tilde{x}^{n+1} }{ D_h W^{n+1} D_h x^{n+1}}
  - \frac{D_h \tilde{x}^n}{D_h W^n D_h x^n} )  \Big] \nonumber
\\
  &&
  + \tau^n ,  \quad \mbox{with $\| \tau^n \|_2 \le C (\dt^4 + h^4)$} .
  \label{error equation-1}
\end{eqnarray}

To proceed with the nonlinear analysis, we make the following a-priori assumption at the previous time steps, for $k=n, n-1, n-2$:
\begin{eqnarray}
  \| \tilde{x}^k \|_2 \le {\cal C} ( \dt^4 + h^4)  ,   \quad
  \mbox{with ${\cal C}$ uniform for a fixed final time $T$} .
   \label{a priori-1}
\end{eqnarray}
Such an a-priori assumption will be recovered by the optimal rate convergence analysis at the next time step, as will be demonstrated later. With this assumption, the following bounds for the numerical solution are obtained, with the help of inverse inequality:
\begin{eqnarray}
  &&
  \| D_h \tilde{x}^k \|_2 \le \frac{C \| \tilde{x}^k \|_2}{h}
  \le C {\cal C} (\dt^3 + h^3) ,   \label{prelim bound-1}
\\
  &&
  \| D_h \tilde{x}^k \|_\infty \le \frac{C \| \tilde{x}^k \|_2}{h^\frac32}
  \le C {\cal C} (\dt^\frac52 + h^\frac52) \le \frac{\epsilon_0^*}{2} ,   \label{prelim bound-2}
\\
  &&
  \mbox{so that} \, \,
   \frac{\epsilon_0^*}{2} \le D_h x^k = D_h W^k - D_h \tilde{x}^k
   \le C^* + \frac{\epsilon_0^*}{2} := \tilde{C}^* ,
  \label{prelim bound-3}
\end{eqnarray}
for $k=n, n-1, n-2$, in which the lower and upper bounds~\eqref{assumption:separation-2}, \eqref{assumption:constructed bound}, for the constructed profile $W$, have been used. In addition, the following observation is made, motived by the preliminary estimate~\eqref{prelim bound-2}:
\begin{eqnarray}
  \| D_h x^k - D_h x^{k-1} \|_\infty
  &\le& \| D_h W^k - D_h W^{k-1} \|_\infty
  +  \| D_h \tilde{x}^k - D_h \tilde{x}^{k-1} \|_\infty  \nonumber
\\
  &\le&
  C^* \dt + C {\cal C} (\dt^\frac52 + h^\frac52)
  \le (C^*+ 1) \dt ,   \label{prelim bound-4}
\end{eqnarray}
for $k=n, n-1$, in which the regularity requirement~\eqref{assumption:constructed bound} for the constructed solution has been applied again. In turn, we conclude that
\begin{eqnarray}
  &&
  \widetilde{D}_h (\frac32 x^k - \frac12 x^{k-1} )
  = \widetilde{D}_h x^k + \frac12 \widetilde{D}_h ( x^k - x^{k-1} )
  \ge \frac{\epsilon_0^*}{2} - \frac{\epsilon_0^*}{4}
  = \frac{\epsilon_0^*}{4}   \ge \dt^2 ,  \label{prelim bound-5}
\\
  &&
  \mbox{so that} \, \, \, S_h (x^k, x^{k-1}) = \widetilde{D}_h \breve{x}^{n+\nicefrac12}
  = \widetilde{D}_h (\frac32 x^k - \frac12 x^{k-1} )  ,  \, \, \,
  \mbox{for $k=n, n-1$} .  \label{prelim bound-5-b}
\end{eqnarray}

The following preliminary estimates are needed in the nonlinear error analysis for the left hand side two terms of~\eqref{error equation-1}.

\begin{lem}   \label{lem:prelim est}
For the constructed profile $W$ satisfying~\eqref{assumption:constructed bound}, and the numerical error function with a discrete $W^{1,\infty}$ bound given by~\eqref{prelim bound-2}, for $k=n, n-1, n-2$, we have
\begin{eqnarray}
  \hspace{-0.4in}  &&
  \| (S_h (x^n , x^{n-1} )  )^{m-1}  -  (S_h (x^{n-1} , x^{n-2} )  )^{m-1} \|_\infty
  \le  \tilde{C}_1 \dt ,
  \label{prelim est-1}
\\
  \hspace{-0.4in}  &&
  (S_h (W^n , W^{n-1} )  )^{m-1} - (S_h (x^n , x^{n-1} )  )^{m-1}
  = {\cal N}^{n+\nicefrac12}  \widetilde{D}_h \breve{\tilde{x}}^{n+\nicefrac12} ,
  \quad \mbox{with} \, \, \,
  \| {\cal N}^{n+\nicefrac12} \|_\infty \le \tilde{C}_2 ,   \label{prelim est-2}
\\
  \hspace{-0.4in}  &&   \mbox{and} \, \, \,
  \| D_h  {\cal N}^{n+\nicefrac12} \|_\infty \le \tilde{C}_3 ,   \, \, \,
  \mbox{if we define} \, \,
  ({\cal N}^{n+\nicefrac12} )_0  = ({\cal N}^{n+\nicefrac12} )_1 , \,
  ({\cal N}^{n+\nicefrac12} )_M  = ({\cal N}^{n+\nicefrac12} )_{M-1} , 
  \label{prelim est-3}
 \end{eqnarray}
in which $\tilde{C}_1$, $\tilde{C}_2$ and $\tilde{C}_3$ is only dependent on the exact solution, independent on $\dt$ and $h$.
\end{lem}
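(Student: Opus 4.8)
The plan is to handle all three estimates through one elementary device: for $a,b>0$,
\[
a^{m-1} - b^{m-1} = (m-1) \Bigl( \int_0^1 ( \theta a + (1-\theta) b )^{m-2} \, d\theta \Bigr) (a-b) ,
\]
together with the remark that, once $a$ and $b$ are confined to a fixed interval $[c_1,c_2]$ with $c_1>0$, the bracketed coefficient is bounded by a constant depending only on $c_1,c_2,m$ --- irrespective of the sign of $m-2$. So the preliminary task is to secure such uniform two-sided bounds for the $S_h$ quantities involved. Using the a-priori assumption~\eqref{a priori-1} at the steps $k=n,n-1,n-2$, the inverse inequality estimate~\eqref{prelim bound-2}, the lower bound~\eqref{prelim bound-3}, and the fact~\eqref{prelim bound-5}--\eqref{prelim bound-5-b} that the maximum defining $S_h$ is not activated, we obtain $\epsilon_0^*/4 \le S_h(x^k,x^{k-1}) \le C$ for $k=n,n-1$; combining the separation property~\eqref{assumption:separation-2} and the smoothness bounds~\eqref{assumption:constructed bound} for $W$ (again forcing the maximum to be inactive for small $\dt$) yields $\epsilon_0^*/2 \le S_h(W^n,W^{n-1}) \le C$. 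All three $S_h$-quantities thus lie in a common interval $[c_1,c_2]$ with $c_1>0$.

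For~\eqref{prelim est-2}, I would write $S_h(W^n,W^{n-1}) - S_h(x^n,x^{n-1}) = \widetilde{D}_h \breve{W}^{n+\nicefrac12} - \widetilde{D}_h \breve{x}^{n+\nicefrac12} = \widetilde{D}_h \breve{\tilde{x}}^{n+\nicefrac12}$, and then define, pointwise, $ {\cal N}^{n+\nicefrac12} := (m-1) \int_0^1 \bigl( \theta\, S_h(W^n,W^{n-1}) + (1-\theta) S_h(x^n,x^{n-1}) \bigr)^{m-2}\, d\theta$; the displayed identity gives exactly the claimed factorization, and the interval bound gives $\| {\cal N}^{n+\nicefrac12} \|_\infty \le \tilde{C}_2$. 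For~\eqref{prelim est-1}, the same representation with $a = S_h(x^n,x^{n-1})$, $b = S_h(x^{n-1},x^{n-2})$ reduces the claim to $\| \widetilde{D}_h \bigl( \tfrac32 (x^n - x^{n-1}) - \tfrac12 (x^{n-1} - x^{n-2}) \bigr) \|_\infty \le C\dt$. That bound follows by splitting $x^k - x^{k-1} = (W^k - W^{k-1}) - (\tilde{x}^k - \tilde{x}^{k-1})$: the $W$-part is $O(\dt)$ by the temporal-derivative bounds in~\eqref{assumption:constructed bound}, while the $\tilde{x}$-part is controlled by the inverse inequality and~\eqref{a priori-1}, namely $\| \widetilde{D}_h (\tilde{x}^k - \tilde{x}^{k-1}) \|_\infty \le C h^{-3/2} {\cal C}(\dt^4 + h^4) \le C {\cal C}(\dt^{5/2} + h^{5/2})$, which is $\le C\dt$ under the linear refinement $C_1 h \le \dt \le C_2 h$.

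For~\eqref{prelim est-3}, first extend ${\cal N}^{n+\nicefrac12}$ to the two boundary nodes as prescribed, so that $(D_h {\cal N}^{n+\nicefrac12})_{\nicefrac12} = (D_h {\cal N}^{n+\nicefrac12})_{M-\nicefrac12} = 0$ trivially, and it remains to bound the interior differences. Writing $A = S_h(W^n,W^{n-1})$, $B = S_h(x^n,x^{n-1})$ and differentiating the integral representation, $ {\cal N}^{n+\nicefrac12}_i - {\cal N}^{n+\nicefrac12}_{i-1} = (m-1)\int_0^1 \bigl[ (\theta A_i + (1-\theta) B_i)^{m-2} - (\theta A_{i-1} + (1-\theta) B_{i-1})^{m-2} \bigr] d\theta$; since $s \mapsto s^{m-2}$ is Lipschitz on $[c_1,c_2]$, this is bounded by $C( |A_i - A_{i-1}| + |B_i - B_{i-1}| )$, whence $\| D_h {\cal N}^{n+\nicefrac12} \|_\infty \le C( \| D_h S_h(W^n,W^{n-1}) \|_\infty + \| D_h S_h(x^n,x^{n-1}) \|_\infty )$. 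The first term is the second divided difference $D_h \widetilde{D}_h \breve{W}^{n+\nicefrac12}$, bounded by the discrete $W^{2,\infty}$ bound on $W$ in~\eqref{assumption:constructed bound}; for the second term, split $\breve{x}^{n+\nicefrac12} = \breve{W}^{n+\nicefrac12} - \breve{\tilde{x}}^{n+\nicefrac12}$, treat the $W$-part as before, and estimate the $\tilde{x}$-part by two inverse inequalities, $\| D_h \widetilde{D}_h \breve{\tilde{x}}^{n+\nicefrac12} \|_\infty \le C h^{-5/2} \| \breve{\tilde{x}}^{n+\nicefrac12} \|_2 \le C h^{-5/2} {\cal C}(\dt^4 + h^4)$, which stays bounded under linear refinement. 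This produces $\tilde{C}_3$, and all three constants depend only on the exact solution (through $\epsilon_0$, $C^*$, $m$), not on $\dt$ or $h$.

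\textbf{Main obstacle.} The genuinely load-bearing step is securing the uniform positive lower bounds on the $S_h$-quantities: because $m-2$ may be negative, $s^{m-2}$ and its derivative blow up as $s \to 0^+$, so every constant above degenerates unless $S_h$ stays bounded away from zero. This is precisely where the degeneracy of the PME would bite, and it is averted only through the combination of the a-priori error bound~\eqref{a priori-1} and the inverse inequality, which forces $D_h \tilde{x}^k$ (hence the gap between $S_h(x^k,x^{k-1})$ and its smooth counterpart) to be small in $\| \cdot \|_\infty$. Everything else is routine mean-value/Lipschitz bookkeeping, the only nuisance being the careful accounting of the powers of $h$ arising from the inverse inequalities and of the non-standard one-sided $\widetilde{D}_h$ stencils near $X_0$ and $X_M$.
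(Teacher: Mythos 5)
Your proof is correct and follows essentially the same route as the paper: both reduce the three estimates to a mean-value representation for $a^{m-1}-b^{m-1}$ with $a,b$ confined to a uniformly positive interval, which is secured exactly as you describe from~\eqref{prelim bound-3}, \eqref{prelim bound-4}, \eqref{prelim bound-5}--\eqref{prelim bound-5-b} and~\eqref{assumption:constructed bound}. The one substantive difference is technical: the paper uses the classical intermediate-value form, so ${\cal N}^{n+\nicefrac12}_i = (m-1)(\xi^{(2)}_i)^{m-2}$ with a pointwise intermediate $\xi^{(2)}_i$, and for~\eqref{prelim est-3} it must track how $\xi^{(2)}_i$ varies between adjacent cells (bounding $|\xi^{(2)}_{i+1}-\xi^{(2)}_i|$ via triangle inequality before applying another mean-value step), whereas your integral representation $ {\cal N}^{n+\nicefrac12} = (m-1)\int_0^1 (\theta\, S_h(W^n,W^{n-1}) + (1-\theta)\, S_h(x^n,x^{n-1}))^{m-2}\, d\theta$ makes ${\cal N}$ a canonical functional of its two arguments, so the derivative bound collapses to a one-line Lipschitz estimate for $s\mapsto s^{m-2}$ on $[c_1,c_2]$. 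That buys a cleaner~\eqref{prelim est-3}; the price is that your version invokes the $h^{-5/2}$ double inverse inequality on $\breve{\tilde{x}}^{n+\nicefrac12}$, where the paper avoids the second divided difference of $\tilde{x}$ altogether by bounding $|\widetilde{D}_h\breve{\tilde{x}}^{n+\nicefrac12}_i|$ and $|\widetilde{D}_h\breve{\tilde{x}}^{n+\nicefrac12}_{i+1}|$ separately via~\eqref{prelim bound-2} (costing only $h^{-3/2}$); under the linear refinement hypothesis both are harmless.
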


\begin{proof}
Based on the representation identity~\eqref{prelim bound-5-b}, we apply the intermediate value theorem and see that
\begin{eqnarray}
  &&
  (S_h (x^n , x^{n-1} )  )^{m-1}  -  (S_h (x^{n-1} , x^{n-2} )  )^{m-1}    \nonumber
\\
  &=&
  (m-1) ( \xi^{(1)} )^{m-2}
  \widetilde{D}_h \Big(  \frac32 (x^n - x^{n-1} )  - \frac12 (x^{n-1} - x^{n-2} )   \Bigr)  ,
  \label{prelim est-4}
\end{eqnarray}
with $\xi^{(1)}$ between $\widetilde{D}_h \Big(  \frac32 x^n - \frac12 x^{n-1} \Big)$ and
$\widetilde{D}_h \Big(  \frac32 x^{n-1} - \frac12 x^{n-2} \Big)$. Meanwhile, by the upper estimate~\eqref{prelim bound-3} and the lower bound~\eqref{prelim bound-5}, we get
\begin{eqnarray}
  \frac{\epsilon_0^*}{4}  \le  \widetilde{D}_h (\frac32 x^k - \frac12 x^{k-1} )
  \le \frac{3 \tilde{C}^*}{2}  ,  \, \, \, \mbox{for $k=n, n-1$, so that}  \,  \, \,
  \frac{\epsilon_0^*}{4}  \le  \xi^{(1)}  \le \frac{3 \tilde{C}^*}{2}  .
  \label{prelim est-5}
\end{eqnarray}
A substitution into~\eqref{prelim est-4}, combined with the estimate~\eqref{prelim bound-4}, indicates the desired inequality, with $\tilde{C}_1 = 2 (m-1) \max ( \frac{4}{\epsilon_0^*}, \frac{3 \tilde{C}^*}{2} )^{|m-2|} (C^*+1)$.

A similar application of intermediate value theorem reveals that
\begin{eqnarray}
  (S_h (W^n , W^{n-1} )  )^{m-1} - (S_h (x^n , x^{n-1} )  )^{m-1}
   =  (m-1) ( \xi^{(2)} )^{m-2} \widetilde{D}_h \breve{\tilde{x}}^{n+ \nicefrac12} ,
      \label{prelim est-6}
\end{eqnarray}
and $\xi^{(2)}$ between $\widetilde{D}_h \Big(  \frac32 W^n - \frac12 W^{n-1} \Big)$ and
$\widetilde{D}_h \Big(  \frac32 x^n - \frac12 x^{n-1} \Big)$. Using the same argument as in~\eqref{prelim est-5}, we see that $\frac{\epsilon_0^*}{4}  \le  \xi^{(2)}  \le \frac{3 \tilde{C}^*}{2}$, so that
\begin{eqnarray}
  \| {\cal N}^{n+\nicefrac12} \|_\infty  = \| (m-1) ( \xi^{(2)} )^{m-2}  \|_\infty
  \le \tilde{C}_2 :=(m-1) \max ( \frac{4}{\epsilon_0^*}, \frac{3 \tilde{C}^*}{2} )^{|m-2|} ,
    \label{prelim est-7}
\end{eqnarray}
which completes the proof of~\eqref{prelim est-2}. Moreover, for two adjacent grid points $x_i$ and $x_{i+1}$ (with $1 \le i \le M-2$), motivated by the fact that
\begin{eqnarray}
  \xi^{(2)}_i \, \mbox{is between} \, \widetilde{D}_h \breve{W}^{n+\nicefrac12}_i \, \mbox{and} \, \widetilde{D}_h \breve{x}^{n+\nicefrac12}_i ,  \, \, \,
  \xi^{(2)}_{i+1} \, \mbox{is between} \, \widetilde{D}_h \breve{W}^{n+\nicefrac12}_{i+1} \, \mbox{and} \, \widetilde{D}_h \breve{x}^{n+\nicefrac12}_{i+1} ,
  \label{prelim est-8-1}
\end{eqnarray}
we have the following observation:
\begin{eqnarray}
  | \xi^{(2)}_{i+1} - \xi^{(2)}_i |  &\le&
  \Big| \widetilde{D}_h \breve{W}^{n+\nicefrac12}_{i+1}
  - \widetilde{D}_h \breve{W}^{n+\nicefrac12}_i  \Big|
  + \Big| \widetilde{D}_h  \breve{\tilde{x}}^{n+\nicefrac12}_i \Big|
  + \Big| \widetilde{D}_h  \breve{\tilde{x}}^{n+\nicefrac12}_{i+1} \Big|  \nonumber
\\
  &\le&
  \frac12 h \Big( \Big| D_h^2 \breve{W}^{n+\nicefrac12}_{i+1} \Big|
     + \Big| D_h^2 \breve{W}^{n+\nicefrac12}_i \Big| \Bigr)
     + C {\cal C} (\dt^\frac52 + h^\frac52)   \nonumber
\\
  &\le&
  C^* h + h = (C^* +1) h ,  \label{prelim est-8-2}
\end{eqnarray}
in which the preliminary estimate~\eqref{prelim bound-2} and the regularity assumption~\eqref{assumption:constructed bound} for the constructed profile have been used. On the other hand, by another application of intermediate value theorem:
\begin{eqnarray}
  {\cal N}^{n+\nicefrac12}_{i+1} - {\cal N}^{n+\nicefrac12}_i
  &=&  (m-1) ( \xi^{(2)}_{i+1} )^{m-2}  -  (m-1) ( \xi^{(2)}_i )^{m-2}  \nonumber
\\
  &=&
  (m-1) (m-2) ( \eta^{(1)} )^{m-3}  ( \xi^{(2)}_{i+1} - \xi^{(2)}_i  ) ,
    \label{prelim est-9-1}
\end{eqnarray}
with $\eta^{(1)}$ between $\xi^{(2)}_i$ and $\xi^{(2)}_{i+1}$, we get the desired estimate
\begin{eqnarray}
  \Big| {\cal N}^{n+\nicefrac12}_{i+1} - {\cal N}^{n+\nicefrac12}_i  \Big|
  \le (m-1) |m-2| \max ( \frac{4}{\epsilon_0^*}, \frac{3 \tilde{C}^*}{2} )^{|m-3|}
  (C^* +1 )  h .  \label{prelim est-9-2}
\end{eqnarray}
This completes the proof of~\eqref{prelim est-3}, by taking $\tilde{C}_3 = (m-1) |m-2| \max ( \frac{4}{\epsilon_0^*}, \frac{3 \tilde{C}^*}{2} )^{|m-3|} (C^* +1 )$.
\end{proof}

Now we proceed with a rough error estimate. Taking a discrete inner product with~\eqref{error equation-1} by $2 \tilde{x}^{n+1}$ leads to
\begin{eqnarray}
  &&
   \Bigl\langle \frac{ (S_h (x^n , x^{n-1} )  )^{m-1} }{m ( f_0(X) )^{m-2} }
   \cdot \frac{\tilde{x}^{n+1} - \tilde{x}^n}{\dt} , 2 \tilde{x}^{n+1} \Bigr\rangle
   +  2 A_0 \dt \langle D_h ( \tilde{x}^{n+1} - \tilde{x}^n ) ,
   D_h \tilde{x}^{n+1}  \rangle
   \nonumber
\\
  &&
  - 2 \Bigl\langle  f_{0}(X) \Big(
    \frac{\ln (D_h W^{n+1}) - \ln ( D_h W^n) }{D_h ( W^{n+1} - W^n ) }
  - \frac{\ln (D_h x^{n+1}) - \ln ( D_h x^n) }{D_h ( x^{n+1} - x^n ) }  \Big) ,
   \tilde{x}^{n+1}  \Bigr\rangle \nonumber
\\
  &=&
   - 2 \Bigl\langle \frac{ (S_h (W^n , W^{n-1} )  )^{m-1} - (S_h (x^n , x^{n-1} )  )^{m-1} }{m ( f_0(X) )^{m-2} }  \cdot \frac{W^{n+1} - W^{n}}{\dt}  ,
     \tilde{x}^{n+1} \Bigr\rangle  \nonumber
\\
  &&
  - 2 \dt^2 \Bigl\langle \frac{D_h \tilde{x}^{n+1} }{ D_h W^{n+1} D_h x^{n+1}}
  - \frac{D_h \tilde{x}^n}{D_h W^n D_h x^n} ,
   D_h \tilde{x}^{n+1} \Bigr\rangle
  + 2 \langle \tau^n ,  \tilde{x}^{n+1}  \rangle .
  \label{convergence-rough-1}
\end{eqnarray}

For the temporal derivative term, we make use of the equality
\begin{eqnarray}
   2 \tilde{x}^{n+1}  (\tilde{x}^{n+1} - \tilde{x}^n)
   = (\tilde{x}^{n+1})^2 - (\tilde{x}^n)^2  + (\tilde{x}^{n+1} - \tilde{x}^n)^2
   \ge (\tilde{x}^{n+1})^2 - (\tilde{x}^n)^2 ,   \label{convergence-rough-2-1}
\end{eqnarray}
and get
\begin{eqnarray}
  &&
  \Bigl\langle \frac{ (S_h (x^n , x^{n-1} )  )^{m-1} }{m ( f_0(X) )^{m-2} }
   \cdot \frac{\tilde{x}^{n+1} - \tilde{x}^n}{\dt} ,
   2 \tilde{x}^{n+1} \Bigr\rangle   \nonumber
 \\
   &\ge&
   \frac{1}{\dt}  \Bigl(
   \Bigl\langle \frac{ (S_h (x^n , x^{n-1} )  )^{m-1} }{m ( f_0(X) )^{m-2} }  ,
    ( \tilde{x}^{n+1} )^2 \Bigr\rangle
   - \Bigl\langle \frac{ (S_h (x^n , x^{n-1} )  )^{m-1} }{m ( f_0(X) )^{m-2} }  ,
    ( \tilde{x}^n )^2 \Bigr\rangle  \Bigr)   \nonumber
\\
  &=&
  \frac{1}{\dt}  \Bigl(
   \Bigl\langle \frac{ (S_h (x^n , x^{n-1} )  )^{m-1} }{m ( f_0(X) )^{m-2} }  ,
    ( \tilde{x}^{n+1} )^2 \Bigr\rangle
   - \Bigl\langle \frac{ (S_h (x^{n-1} , x^{n-2} )  )^{m-1} }{m ( f_0(X) )^{m-2} }  ,
    ( \tilde{x}^n )^2 \Bigr\rangle  \Bigr)   \nonumber
\\
  &&
  - \frac{1}{\dt}
   \Bigl\langle \frac{ (S_h (x^n , x^{n-1} )  )^{m-1} - (S_h (x^{n-1} , x^{n-2} )  )^{m-1} }{m ( f_0(X) )^{m-2} }  ,   ( \tilde{x}^n )^2 \Bigr\rangle  .    \label{convergence-rough-2-2}
\end{eqnarray}
Furthermore, with an application of the preliminary estimate~\eqref{prelim est-1}, the last term of~\eqref{convergence-rough-2-2} could be bounded as
\begin{eqnarray}
  &&
     \frac{1}{\dt}
   \Bigl\langle \frac{ (S_h (x^n , x^{n-1} )  )^{m-1} - (S_h (x^{n-1} , x^{n-2} )  )^{m-1} }{m ( f_0(X) )^{m-2} }  ,   ( \tilde{x}^n )^2 \Bigr\rangle
   \le \tilde{C}_4 \| \tilde{x}^n \|_2^2 ,  \label{convergence-rough-2-3}
\\
  &&  \mbox{with} \, \, \,
  \tilde{C}_4 =  \tilde{C}_1 \max \Big( \frac{1}{\min_\Omega f_0 (X)} , \max_\Omega f_0 (X) \Big)^{|m-2|}  .   \nonumber
\end{eqnarray}

The second term on the left hand side of~\eqref{convergence-rough-1} could be controlled by a standard inequality:
\begin{eqnarray}
   2 A_0 \dt \langle D_h ( \tilde{x}^{n+1} - \tilde{x}^n ) ,
   D_h \tilde{x}^{n+1}  \rangle
   \ge  A_0 \dt ( \| D_h  \tilde{x}^{n+1} \|_2^2  - \| D_h \tilde{x}^n \|_2 ) .
     \label{convergence-rough-3}
\end{eqnarray}
The Cauchy inequality could be applied to bound the term associated with the local truncation error:
\begin{eqnarray}
   2 \langle \tau^n ,  \tilde{x}^{n+1}  \rangle
   \le \| \tau^n \|_2^2 +  \| \tilde{x}^{n+1} \|_2^2 .  \label{convergence-rough-4}
\end{eqnarray}

For the second term on the right hand side of~\eqref{convergence-rough-1}, we make the following observations:
\begin{eqnarray}
  - 2 \dt^2 \Bigl\langle \frac{D_h \tilde{x}^{n+1} }{ D_h W^{n+1} D_h x^{n+1}}  ,
     D_h \tilde{x}^{n+1} \Bigr\rangle  &\le& 0 ,   \label{convergence-rough-5-1}
\\
   2 \dt^2 \Bigl\langle \frac{D_h \tilde{x}^n}{D_h W^n D_h x^n}  ,
     D_h \tilde{x}^{n+1} \Bigr\rangle
     &\le& 2 \dt^2 \cdot \frac{1}{\frac12 (\epsilon_0^*)^2}
     \| D_h \tilde{x}^n \|_2 \cdot \| D_h \tilde{x}^{n+1} \|_2   \nonumber
\\
  &\le&
  \dt^3 \| D_h \tilde{x}^{n+1} \|_2^2
  + 4 \dt (\epsilon_0^*)^{-4}  \| D_h \tilde{x}^n \|_2^2    \nonumber
\\
  &\le&
  \tilde{C}_5 \dt \| \tilde{x}^{n+1} \|_2^2
  + 4 \dt (\epsilon_0^*)^{-4}  \| D_h \tilde{x}^n \|_2^2 ,
   \label{convergence-rough-5-2}
\end{eqnarray}
in which~\eqref{convergence-rough-5-1} is based on the fact that $D_h W^{n+1} >0$, $D_h x^{n+1} >0$ (as given by the unique solvability analysis in Theorem~\ref{thm:solvability}), the first step of~\eqref{convergence-rough-5-2} comes from the separation property~\eqref{assumption:separation-2} (for the constructed profile $W$) and the preliminary estimate~\eqref{prelim bound-3}, and an inverse inequality has been applied at the last step.

For the first term on the right hand side of~\eqref{convergence-rough-1}, the preliminary estimate~\eqref{prelim est-2} and the regularity assumption~\eqref{assumption:constructed bound} have to be applied:
\begin{eqnarray}
  &&
   - 2 \Bigl\langle \frac{ (S_h (W^n , W^{n-1} )  )^{m-1} - (S_h (x^n , x^{n-1} )  )^{m-1} }{m ( f_0(X) )^{m-2} }  \cdot \frac{W^{n+1} - W^{n}}{\dt}  ,
     \tilde{x}^{n+1} \Bigr\rangle   \nonumber
\\
  &=&
  - 2 \Bigl\langle \frac{ {\cal N}^{n+\nicefrac12}  \widetilde{D}_h \breve{\tilde{x}}^{n+\nicefrac12} }{m ( f_0(X) )^{m-2} }  \cdot \frac{W^{n+1} - W^{n}}{\dt}  ,
     \tilde{x}^{n+1} \Bigr\rangle    \nonumber
\\
  &\le&
     \tilde{C}_6     \| \widetilde{D}_h \breve{\tilde{x}}^{n+\nicefrac12}  \|_2
     \cdot \| \tilde{x}^{n+1} \|_2
     \le \frac{\tilde{C}_6}{2} (
     \| D_h \breve{\tilde{x}}^{n+\nicefrac12}  \|_2^2
     + \| \tilde{x}^{n+1} \|_2^2 ) ,
   \label{convergence-rough-6-1}
\\
  &&   \mbox{with} \, \, \,
  \tilde{C}_6 = \frac{2}{m}  \tilde{C}_2 C^* \cdot \max \Big( \frac{1}{\min_\Omega f_0 (X)} , \max_\Omega f_0 (X) \Big)^{|m-2|}  .  \nonumber
\end{eqnarray}
Notice that the inequality $\| \widetilde{D}_h f \|_2 \le \| D_h f \|_2$ has been used in the last step.

The rest analysis is focused on the term associated with the nonlinear diffusion part:  \begin{eqnarray}
  {\cal NLE}^{n+\nicefrac12} :=
  - \frac{\ln (D_h W^{n+1}) - \ln ( D_h W^n) }{D_h ( W^{n+1} - W^n ) }
  + \frac{\ln (D_h x^{n+1}) - \ln ( D_h x^n) }{D_h ( x^{n+1} - x^n ) }  .
  \label{notation-3}
\end{eqnarray}
The following lemma is needed in the nonlinear estimate; its proof will be provided in the Appendix~\ref{sec:proof lemma 2}.

\begin{lem} \label{lem:convexity-1}
Fix $x_0 > 0$, and we define $q_1 (x) := - \frac{\ln x - \ln x_0}{x - x_0}$ for $x >0$. The following properties are valid:
\begin{eqnarray}
  &&
  q'_1 (x) > 0 , \, \, \mbox{for any $x>0$} ,    \label{q convexity-1}
\\
  &&
  q_1 (y) - q_1 (x) = q'_1 (\eta) (y-x) ,  \, \, \mbox{with $q'_1 (\eta)$ between $\frac{1}{2 y^2}$, $\frac{1}{2 x^2}$ and $\frac{1}{2 x_0^2}$,  $\forall x>0, \, y>0$} ,
  \label{q convexity-2}
\\
  &&
  q''_1 (x) \le 0 , \, \, \mbox{for any $x>0$} ,    \label{q convexity-3}
\\
  &&
  \frac{q_1 (y) - q_1 (x)}{y -x} \, \, \mbox{is a decreasing function of $x$, for any fixed $y>0$} .  \label{q convexity-4}
\end{eqnarray}
\end{lem}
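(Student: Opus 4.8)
The plan is to replace the divided difference defining $q_1$ by an integral representation that is manifestly smooth across the removable singularity at $x = x_0$. First I would record that, for every $x > 0$,
\[
 q_1 (x) = - \int_0^1 \frac{dt}{(1-t) x_0 + t x} ,
\]
which follows from the substitution $u = (1-t) x_0 + t x$ in the elementary identity $\int_0^1 \big( (1-t) x_0 + t x \big)^{-1} \, dt = \frac{\ln x - \ln x_0}{x - x_0}$, and which reduces to $1/x_0$ at $x = x_0$, consistently with the removable singularity. Since the integrand and all of its $x$-derivatives are bounded uniformly in $t \in [0,1]$ as long as $x$ stays in a compact subset of $(0,\infty)$, differentiation under the integral sign is legitimate and gives
\[
 q_1' (x) = \int_0^1 \frac{t \, dt}{\big( (1-t) x_0 + t x \big)^2} , \qquad
 q_1'' (x) = - 2 \int_0^1 \frac{t^2 \, dt}{\big( (1-t) x_0 + t x \big)^3} .
\]

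From these formulas, \eqref{q convexity-1} and \eqref{q convexity-3} are immediate, since both integrands are strictly positive on $(0,1)$; hence $q_1' > 0$ and $q_1'' < 0 \le 0$ on all of $(0,\infty)$. For the bound behind \eqref{q convexity-2}, I would use that the convex combination $(1-t) x_0 + t x$ always lies between $\min(x,x_0)$ and $\max(x,x_0)$; inserting these bounds into the formula for $q_1'$ and using $\int_0^1 t \, dt = \tfrac12$ yields
\[
 \frac{1}{2 \max(x,x_0)^2} \le q_1' (x) \le \frac{1}{2 \min(x,x_0)^2} ,
\]
i.e., $q_1'(x)$ always lies between $\tfrac{1}{2x^2}$ and $\tfrac{1}{2x_0^2}$. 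Then \eqref{q convexity-2} follows from the ordinary mean value theorem applied to $q_1$ on the interval with endpoints $x$ and $y$: it produces an $\eta$ strictly between $x$ and $y$ with $q_1(y) - q_1(x) = q_1'(\eta)(y-x)$, and since $\eta$ lies between $x$ and $y$, $\tfrac{1}{2\eta^2}$ lies between $\tfrac{1}{2x^2}$ and $\tfrac{1}{2y^2}$, so the displayed two-sided bound places $q_1'(\eta)$ within the interval spanned by $\tfrac{1}{2x^2}$, $\tfrac{1}{2y^2}$ and $\tfrac{1}{2x_0^2}$ (the degenerate case $x = y$ being trivial).

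Finally, \eqref{q convexity-4} is a direct consequence of \eqref{q convexity-3}: strict concavity of $q_1$ gives the tangent-line inequality $q_1(y) \le q_1(x) + q_1'(x)(y-x)$, whence
\[
 \frac{\partial}{\partial x} \Bigl( \frac{q_1(y) - q_1(x)}{y-x} \Bigr)
 = \frac{\big( q_1(y) - q_1(x) \big) - q_1'(x)(y-x)}{(y-x)^2} \le 0 ,
\]
so the divided difference is decreasing in $x$ for each fixed $y > 0$. I expect the only genuine subtlety to be the bookkeeping at $x = x_0$; adopting the integral representation at the outset is precisely what dissolves it, since $x \mapsto \int_0^1 \big( (1-t) x_0 + t x \big)^{-1} \, dt$ is a smooth function of $x > 0$ with no special behaviour at $x_0$. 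An alternative would be to differentiate $q_1$ directly and reduce \eqref{q convexity-1}, \eqref{q convexity-3} and the bounds on $q_1'$ to one-variable inequalities such as $t - 1 - \ln t \ge 0$ and $t \ln t - t + 1 \ge 0$, handling $x = x_0$ by separate Taylor expansions; but the integral route is shorter and uniform.
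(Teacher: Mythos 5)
Your proof is correct, and it reaches the same conclusions by a genuinely different route.  The paper's argument computes $q_1'$ and $q_1''$ by direct differentiation of the divided difference, then extracts signs and the identity $q_1'(x) = \tfrac{1}{2\zeta^2}$ (with $\zeta$ between $x_0$ and $x$) from second- and third-order Taylor expansions of $\ln$ with Lagrange remainder; the final monotonicity statement is obtained, as in your proof, from the tangent-line inequality for concave functions.  You instead start from the integral representation
\[
q_1(x) = -\int_0^1 \frac{dt}{(1-t)x_0 + tx},
\]
differentiate under the integral, and read off $q_1' > 0$, $q_1'' < 0$, and the two-sided bound $\tfrac{1}{2\max(x,x_0)^2} \le q_1'(x) \le \tfrac{1}{2\min(x,x_0)^2}$ all at once from positivity of the integrands and the fact that $(1-t)x_0+tx$ is a convex combination.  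The integral route is cleaner on two counts: it makes the removable singularity at $x=x_0$ invisible from the start (whereas the paper's Taylor expansions are implicitly for $x\neq x_0$ and the point $x=x_0$ requires a limiting argument), and it delivers the bounds on $q_1'$ as a single inequality rather than via nested applications of the mean value theorem.  The paper's approach is more elementary in that it uses nothing beyond one-variable calculus, but the proofs are logically equivalent, and your step from the $\eta$-between-$x$-and-$y$ bound $q_1'(\eta)\in[\tfrac{1}{2\eta^2}\wedge\tfrac{1}{2x_0^2},\tfrac{1}{2\eta^2}\vee\tfrac{1}{2x_0^2}]$ to the stated three-point enclosure is exactly the same argument the paper leaves implicit.
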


Subsequently, the following point-wise nonlinear estimate becomes available for ${\cal NLE}^{n+\nicefrac12}$.

\begin{lem} \label{lem:nonlinear-1}
At each numerical mesh cell $(x_i, x_{i+1})$, we have the following estimate
\begin{eqnarray}
  &&
  {\cal NLE}^{n+\nicefrac12}_{i+\nicefrac12}
  = \xi^{(3)}_{i+\nicefrac12} D_h \tilde{x}^{n+1}_{i+\nicefrac12}
  + \xi^{(4)}_{i+\nicefrac12} D_h \tilde{x}^n_{i+\nicefrac12}  ,
  \label{nonlinear est-0-1}
\\
  &&   \mbox{with} \, \, \,
  \xi^{(3)}_{i+\nicefrac12} \ge \frac{1}{2 \tilde{C}^*} h ,  \, \,
  \frac{1}{2 (\tilde{C}^*)^2} \le \xi^{(4)}_{i+\nicefrac12} \le 2 ( \epsilon_0^*)^{-2} .
  \label{nonlinear est-0-2}
\end{eqnarray}
\end{lem}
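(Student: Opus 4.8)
The plan is to linearize ${\cal NLE}^{n+\nicefrac12}_{i+\nicefrac12}$ in the two quantities $D_h\tilde{x}^{n+1}_{i+\nicefrac12}$ and $D_h\tilde{x}^n_{i+\nicefrac12}$ by writing each of the two divided differences of the logarithm in integral form. Using the elementary identity $\frac{\ln\alpha - \ln\beta}{\alpha-\beta} = \int_0^1 \frac{ds}{(1-s)\beta + s\alpha}$, which holds for $\alpha,\beta>0$ and reduces to $1/\alpha$ when $\alpha=\beta$ (so the degenerate branch built into the scheme is covered automatically), I would set, on the cell $(x_i,x_{i+1})$,
\[
  a_s := (1-s)\, D_h x^n_{i+\nicefrac12} + s\, D_h x^{n+1}_{i+\nicefrac12} , \qquad
  b_s := (1-s)\, D_h W^n_{i+\nicefrac12} + s\, D_h W^{n+1}_{i+\nicefrac12} ,
\]
so that ${\cal NLE}^{n+\nicefrac12}_{i+\nicefrac12} = \int_0^1 \bigl( \frac{1}{a_s} - \frac{1}{b_s} \bigr)\, ds = \int_0^1 \frac{b_s - a_s}{a_s b_s}\, ds$. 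Since $b_s - a_s = (1-s)\, D_h\tilde{x}^n_{i+\nicefrac12} + s\, D_h\tilde{x}^{n+1}_{i+\nicefrac12}$, this is exactly the claimed decomposition, with
\[
  \xi^{(3)}_{i+\nicefrac12} = \int_0^1 \frac{s}{a_s b_s}\, ds , \qquad
  \xi^{(4)}_{i+\nicefrac12} = \int_0^1 \frac{1-s}{a_s b_s}\, ds ,
\]
both of which are manifestly positive. (Equivalently one may iterate Lemma~\ref{lem:convexity-1}: telescope through $q_1(D_h x^{n+1};D_h W^n)$, apply~\eqref{q convexity-2} to the change of the first argument and, after invoking the symmetry $q_1(\alpha;\beta)=q_1(\beta;\alpha)$, to the change of base point; positivity of the two coefficients then comes from~\eqref{q convexity-1}.)

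It remains to bound these two weights with the pointwise information already available at this stage. Theorem~\ref{thm:solvability} gives $D_h x^{n+1}>0$ pointwise, while $0\le x^{n+1}_i\le 1$ yields the crude upper bound $D_h x^{n+1}\le 1/h$; the preliminary estimate~\eqref{prelim bound-3} gives $\epsilon_0^*/2 \le D_h x^n \le \tilde{C}^*$; and~\eqref{assumption:separation-2} together with~\eqref{assumption:constructed bound} gives $\epsilon_0^* \le D_h W \le C^* \le \tilde{C}^*$ at every time level. Hence $a_s$ is a convex combination lying in $(0,1/h]$ with $a_s \ge (1-s)\, D_h x^n \ge (1-s)\,\epsilon_0^*/2$, while $\epsilon_0^* \le b_s \le \tilde{C}^*$. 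For $\xi^{(3)}$ I would use $a_s b_s \le \tilde{C}^*/h$, so $\xi^{(3)}_{i+\nicefrac12} \ge \frac{h}{\tilde{C}^*}\int_0^1 s\,ds = \frac{h}{2\tilde{C}^*}$; the surviving factor $h$ is precisely the price of having only the weak bound $D_h x^{n+1}\le 1/h$ at this pre-convergence stage. For the upper bound on $\xi^{(4)}$ the decisive cancellation is $\frac{1-s}{a_s} \le \frac{1-s}{(1-s)\, D_h x^n} = \frac{1}{D_h x^n} \le \frac{2}{\epsilon_0^*}$, so $\xi^{(4)}_{i+\nicefrac12} \le \int_0^1 \frac{2}{(\epsilon_0^*)^2}\, ds = 2(\epsilon_0^*)^{-2}$; the lower bound $\xi^{(4)}_{i+\nicefrac12} \ge \frac{1}{2(\tilde{C}^*)^2}$ follows from $a_s b_s \le (\tilde{C}^*)^2$ and $\int_0^1 (1-s)\,ds = \frac12$. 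All constants involved depend only on $\epsilon_0^*$, $C^*$, $\tilde{C}^*$ and $m$, hence only on the exact solution.

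The step I expect to be the main obstacle is the base-point mismatch: the two divided differences making up ${\cal NLE}^{n+\nicefrac12}$ are anchored at $D_h W^n$ and $D_h x^n$ respectively, so a single mean value theorem will not cleanly separate a $D_h\tilde{x}^{n+1}$ part from a $D_h\tilde{x}^n$ part; the integral representation (equivalently, the symmetry of $q_1$ used inside a telescoped application of Lemma~\ref{lem:convexity-1}) is what resolves this. The second delicate point, and the reason a crude triangle-inequality bound does not suffice, is that at the rough-estimate stage no lower bound on $D_h x^{n+1}$ is yet available, so $1/a_s$ may be large near $s=1$; the estimate for $\xi^{(4)}$ survives only because the weight $1-s$ in the numerator vanishes there at exactly the compensating rate, which is why the split must be performed with the integral weights rather than with absolute values. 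Everything else is routine bookkeeping of constants.
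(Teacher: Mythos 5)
Your integral representation of the divided log-difference is correct and gives a clean, coordinate-free decomposition: writing $a_s=(1-s)D_h x^n+s\,D_h x^{n+1}$ and $b_s=(1-s)D_h W^n+s\,D_h W^{n+1}$, one indeed has ${\cal NLE}^{n+\nicefrac12}=\int_0^1\frac{b_s-a_s}{a_s b_s}\,ds$, so $\xi^{(3)}=\int_0^1\frac{s}{a_s b_s}\,ds$ and $\xi^{(4)}=\int_0^1\frac{1-s}{a_s b_s}\,ds$. The bound $\xi^{(3)}\ge\frac{h}{2\tilde C^*}$ and the upper bound $\xi^{(4)}\le 2(\epsilon_0^*)^{-2}$ both follow as you say, the latter via the cancellation $\frac{1-s}{a_s}\le\frac{1}{D_h x^n}$.

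However, the lower bound $\xi^{(4)}\ge\frac{1}{2(\tilde C^*)^2}$ does \emph{not} follow from your decomposition, and the step you cite in support of it is false at this stage of the argument. You invoke $a_s b_s\le(\tilde C^*)^2$, but $a_s$ is a convex combination of $D_h x^n$ (which is $\le\tilde C^*$) and $D_h x^{n+1}$, and at the pre-rough-estimate stage all that is known about $D_h x^{n+1}$ is $0<D_h x^{n+1}\le 1/h$: the two-sided bound~\eqref{convergence-rough-10} is only established \emph{after} the rough estimate that this very lemma serves. Hence $a_s b_s$ is only bounded by $\tilde C^*/h$ for $s$ near $1$, and the resulting lower bound from your formula is $\xi^{(4)}\ge\frac{h}{2\tilde C^*}$, the same $O(h)$ order as $\xi^{(3)}$, not the stated $O(1)$ constant. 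Indeed, a direct computation with $D_h x^{n+1}=1/h$ shows $\int_0^1\frac{1-s}{a_s}\,ds=O(h|\ln h|)\to 0$, so your $\xi^{(4)}$ can be made arbitrarily small; your own remark about the weight $1-s$ ``compensating at exactly the right rate'' is correct for the upper bound but cuts the other way for the lower bound.

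The paper sidesteps this by using a \emph{different} splitting: it writes ${\cal NLE}^{n+\nicefrac12}$ as a telescoping sum in which the $D_h\tilde x^{n+1}$-difference is taken with $q_1$ anchored at $x_0=D_h x^n$ (giving $\xi^{(3)}$, which therefore may degenerate to $O(h)$), while the $D_h\tilde x^n$-difference is taken with $q_1$ anchored at $x_0=D_h W^{n+1}$ and arguments $D_h W^n$, $D_h x^n$ (giving $\xi^{(4)}$). With that anchoring, $\xi^{(4)}=q_1'(\eta)$ with $q_1'(\eta)$ squeezed between $\frac{1}{2(D_h W^n)^2}$, $\frac{1}{2(D_h x^n)^2}$, and $\frac{1}{2(D_h W^{n+1})^2}$ by Lemma~\ref{lem:convexity-1}\eqref{q convexity-2}, and \emph{none} of those three quantities involves $D_h x^{n+1}$; all three lie in $[\epsilon_0^*/2,\tilde C^*]$, which yields the two-sided $O(1)$ bound immediately. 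The design decision is precisely to push the not-yet-controlled quantity $D_h x^{n+1}$ entirely into $\xi^{(3)}$, where only a one-sided $O(h)$ bound is required. Your symmetric integral form, by contrast, smears $D_h x^{n+1}$ across both coefficients and loses the $O(1)$ lower bound on $\xi^{(4)}$. To repair your proof you would either need to switch to the paper's asymmetric telescoping, or state the lemma with the weaker conclusion $\xi^{(4)}\ge\frac{h}{2\tilde C^*}$ (which, one may check, is in fact all that the subsequent rough estimate~\eqref{convergence-rough-7-1}--\eqref{convergence-rough-7-2} actually uses — but then you would be proving a different statement than the one given).
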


\begin{proof}
The following decomposition is performed for ${\cal NLE}$:
\begin{eqnarray}
  {\cal NLE}^{n+\nicefrac12}
  &=&  {\cal NLE}^{n+\nicefrac12,(1)}  + {\cal NLE}^{n+\nicefrac12,(2)}, \, \, \,
  \mbox{with}   \nonumber
\\
  {\cal NLE}^{n+\nicefrac12,(1)}
  &=& - \frac{\ln (D_h W^{n+1}) - \ln ( D_h x^n) }{D_h W^{n+1} - D_h x^n }
  + \frac{\ln (D_h x^{n+1}) - \ln ( D_h x^n) }{D_h x^{n+1} - D_h x^n }   \nonumber
\\
  &=&
    q_1 (D_h W^{n+1}) - q_1 (D_h x^{n+1}) ,   \, \, \,
  \mbox{with fixed $x_0 = D_h x^n$} ,   \label{nonlinear est-1-1}
\\
  {\cal NLE}^{n+\nicefrac12,(2)}
  &=& - \frac{\ln (D_h W^{n+1}) - \ln ( D_h W^n) }{D_h W^{n+1} - D_h W^n }
  + \frac{\ln (D_h W^{n+1}) - \ln ( D_h x^n) }{D_h W^{n+1} - D_h x^n }   \nonumber
\\
  &=&
    q_1 (D_h W^n) - q_1 (D_h x^n) ,   \, \, \,
  \mbox{with fixed $x_0 = D_h W^{n+1}$} .   \label{nonlinear est-1-2}
\end{eqnarray}
For the first part ${\cal NLE}^{n+\nicefrac12,(1)}$, we make use of the following bound for $D_h x^{n+1}$:
\begin{eqnarray}
  0 < (D_h x^{n+1} )_{i+ \nicefrac12} \le \frac{1}{h} ,   \quad
  \mbox{since} \, \, 0 \le  x_k^{n+1} \le 1 ,   \, \, \forall  0 \le k \le M ,
   \label{nonlinear est-2}
\end{eqnarray}
so that an application of property~\eqref{q convexity-4} implies that
\begin{eqnarray}
   \frac{q_1 (D_h W^{n+1}) - q_1 (D_h x^{n+1}) }{ D_h W^{n+1} - D_h x^{n+1} }
   &\ge&  \frac{q_1 (D_h W^{n+1}) - q_1 (\frac{1}{h}) }{ D_h W^{n+1} - \frac{1}{h} }
   \nonumber
\\
  &=&
  \frac{-\frac{\ln \frac{1}{h} - \ln ( D_h x^n) }{\frac{1}{h} - D_h x^n }
  - q_1 (D_h W^{n+1}) }{ \frac{1}{h} - D_h W^{n+1} }
  \ge  \frac{1}{2 \tilde{C}^*} h ,  \label{nonlinear est-3-1}
\end{eqnarray}
in which the last step is based on the preliminary estimate~\eqref{prelim bound-3}, as well as the fact that, the value of $-q_1 (D_h W^{n+1})$ is between $\frac{1}{D_h W^{n+1}}$ and $\frac{1}{D_h x^n}$. This inequality is equivalent to
\begin{eqnarray}
   {\cal NLE}^{n+\nicefrac12,(1)}_{i+\nicefrac12}
   = \xi^{(3)}_{i+\nicefrac12} D_h \tilde{x}^{n+1}_{i+\nicefrac12} ,   \quad
   \mbox{with} \, \, \, \xi^{(3)}_{i+\nicefrac12} \ge \frac{1}{2 \tilde{C}^*} h .
    \label{nonlinear est-3-2}
\end{eqnarray}
For the first part ${\cal NLE}^{n+\nicefrac12,(2)}$, we apply property~\eqref{q convexity-2} so that
\begin{eqnarray}
  &&
   {\cal NLE}^{n+\nicefrac12,(2)}
  = q_1 (D_h W^n) - q_1 (D_h x^n)
  = q'_1 (\eta) D_h \tilde{x}^n ,  \label{nonlinear est-4-1}
\\
  &&  \mbox{with} \, \, \, q'_1 (\eta)  \, \mbox{between} \, \,
  \frac{1}{2 (D_h W^n)^2} , \, \, \frac{1}{2 (D_h x^n)^2} , \, \,  \mbox{and} \, \,
   \frac{1}{2 (D_h W^{n+1})^2} .
    \label{nonlinear est-4-2}
\end{eqnarray}
On the other hand, by the separation property~\eqref{assumption:separation-2}, the regularity assumption~\eqref{assumption:constructed bound} for $W$, combined with the preliminary estimate~\eqref{prelim bound-3}, we obtain the desired estimate:
\begin{eqnarray}
  \frac{1}{2 (\tilde{C}^*)^2} \le q'_1 (\eta) \le 2 ( \epsilon_0^*)^{-2} .
  \label{nonlinear est-4-3}
\end{eqnarray}
In other words, the second estimate in~\eqref{nonlinear est-0-1} becomes available:
\begin{eqnarray}
  {\cal NLE}^{n+\nicefrac12,(2)}_{i+\nicefrac12}
   = \xi^{(4)}_{i+\nicefrac12} D_h \tilde{x}^n_{i+\nicefrac12}  ,  \quad
   \mbox{with} \, \, \,
  \frac{1}{2 (\tilde{C}^*)^2} \le \xi^{(4)}_{i+\nicefrac12} \le 2 ( \epsilon_0^*)^{-2} .
  \label{nonlinear est-4-4}
\end{eqnarray}
This completes the proof of Lemma~\ref{lem:nonlinear-1}.
\end{proof}

As a consequence of this lemma, we analyze the nonlinear product at each cell $(x_i, x_{i+1})$:
\begin{eqnarray}
  &&
  f_0 (X_{i+\nicefrac12}) {\cal NLE}^{n+\nicefrac12}_{i+\nicefrac12}
  \cdot 2 D_h \tilde{x}^{n+1}_{i+\nicefrac12}
   =   2 f_0 (X_{i+\nicefrac12}) \Big( \xi^{(3)}_{i+\nicefrac12}
   ( D_h \tilde{x}^{n+1}_{i+\nicefrac12}  )^2
  + \xi^{(4)}_{i+\nicefrac12} D_h \tilde{x}^{n+1}_{i+\nicefrac12}
  \cdot D_h \tilde{x}^n_{i+\nicefrac12}  \Big)  \nonumber
\\
  &\ge&
  2 f_0 (X_{i+\nicefrac12}) \Big( \frac{1}{2 \tilde{C}^*} h
     ( D_h \tilde{x}^{n+1}_{i+\nicefrac12}  )^2
  -  2 ( \epsilon_0^*)^{-2} D_h \tilde{x}^{n+1}_{i+\nicefrac12}
  \cdot D_h \tilde{x}^n_{i+\nicefrac12}  \Big)  \nonumber
\\
  &\ge&
  2 f_0 (X_{i+\nicefrac12}) \cdot \Big(
  -  \frac{2 \tilde{C}^* ( \epsilon_0^*)^{-4} }{h}
  \cdot ( D_h \tilde{x}^n_{i+\nicefrac12} )^2  \Big)  ,
   \label{convergence-rough-7-1}
\end{eqnarray}
in which the Cauchy inequality has been applied at the last step. A summation of this inequality yields
\begin{eqnarray}
  \Big\langle f_0 (X) {\cal NLE}^{n+\nicefrac12} ,  2 D_h \tilde{x}^{n+1}  \Big\rangle
  \ge - \tilde{C}_7 h^{-1} \| D_h \tilde{x}^n \|_2^2 ,   \quad \mbox{with} \, \, \,
  \tilde{C}_7 = 4 \tilde{C}^* ( \epsilon_0^*)^{-4} \| f_0 (X) \|_\infty .
   \label{convergence-rough-7-2}
\end{eqnarray}

Finally, a substitution of~\eqref{convergence-rough-2-2}, \eqref{convergence-rough-2-3}, \eqref{convergence-rough-3}, \eqref{convergence-rough-4}, \eqref{convergence-rough-5-1}, \eqref{convergence-rough-5-2}, \eqref{convergence-rough-6-1} and \eqref{convergence-rough-7-2} into~\eqref{convergence-rough-1} leads to
\begin{eqnarray}
   &&
   \Bigl\langle \frac{ (S_h (x^n , x^{n-1} )  )^{m-1} }{m ( f_0(X) )^{m-2} }
   - \dt \Big( \frac{\tilde{C}_6}{2}  + 1 + \tilde{C}_5 \dt  \Big) ,
    ( \tilde{x}^{n+1} )^2 \Bigr\rangle  \nonumber
\\
  &\le&
     \Bigl\langle \frac{ (S_h (x^{n-1} , x^{n-2} )  )^{m-1} }{m ( f_0(X) )^{m-2} }  ,
    ( \tilde{x}^n )^2 \Bigr\rangle  \Bigr)
    + \dt \tilde{C}_4 \| \tilde{x}^n \|_2^2
    + ( A_0 + 4 (\epsilon_0^*)^{-4} ) \dt^2 \| D_x \tilde{x}^n \|_2^2  \nonumber
\\
  &&
    + \dt \| \tau^n \|_2^2
    + \frac{\tilde{C}_6}{2} \dt \| D_h \breve{\tilde{x}}^{n+\nicefrac12} \|_2^2
    + \tilde{C}_7 \cdot \frac{\dt}{h} \| D_h \tilde{x}^n \|_2^2 .
    \label{convergence-rough-8-1}
\end{eqnarray}
On the left hand side, we observe the following point-wise lower bound, which comes from the preliminary estimate~\eqref{prelim bound-5}:
\begin{eqnarray}
  &&
  \frac{ (S_h (x^n , x^{n-1} )  )^{m-1} }{m ( f_0(X) )^{m-2} }
  \ge \tilde{C}_8 := \frac1m (\frac{\epsilon_0^*}{4})^{m-1}
   \min \Big( \frac{1}{\max_\Omega f_0 (X)} , \min_\Omega f_0 (X) \Big)^{|m-2|} ,
   \label{convergence-rough-8-2}
\\
  &&
  \dt \Big( \frac{\tilde{C}_6}{2}  + 1 + \tilde{C}_5 \dt  \Big) \le \frac{\tilde{C}_8}{2} ,
  \quad \mbox{provided that $\dt$ is sufficiently small} ,
    \label{convergence-rough-8-3}
\end{eqnarray}
which in turn indicates that
\begin{eqnarray}
  \Bigl\langle \frac{ (S_h (x^n , x^{n-1} )  )^{m-1} }{m ( f_0(X) )^{m-2} }
   - \dt \Big( \frac{\tilde{C}_6}{2}  + 1 + \tilde{C}_5 \dt  \Big) ,
    ( \tilde{x}^{n+1} )^2 \Bigr\rangle
   \ge \frac{\tilde{C}_8}{2} \| \tilde{x}^{n+1} \|_2^2 .
    \label{convergence-rough-8-4}
\end{eqnarray}
On the right hand side, the following estimates are available, based on the a-priori assumption~\eqref{a priori-1} and the preliminary estimate~\eqref{prelim bound-1}:
\begin{eqnarray}
  &&
  \Bigl\langle \frac{ (S_h (x^{n-1} , x^{n-2} )  )^{m-1} }{m ( f_0(X) )^{m-2} }  ,
    ( \tilde{x}^n )^2 \Bigr\rangle  \Bigr)
    = O ( (\dt^4 + h^4)^2) ,   \label{convergence-rough-8-5}
\\
  &&
  \dt \tilde{C}_4 \| \tilde{x}^n \|_2^2 + \dt \| \tau^n \|_2^2
  = O ( \dt (\dt^4 + h^4)^2) ,   \label{convergence-rough-8-6}
\\
  &&
  ( A_0 + 4 (\epsilon_0^*)^{-4} ) \dt^2 \| D_x \tilde{x}^n \|_2^2
  = O ( \dt^2 (\dt^3 + h^3)^2) ,   \label{convergence-rough-8-7}
\\
  &&
  \frac{\tilde{C}_6}{2} \dt \| D_h \breve{\tilde{x}}^{n+\nicefrac12} \|_2^2
  = O ( \dt (\dt^3 + h^3)^2) ,   \label{convergence-rough-8-8}
\\
  &&
  \tilde{C}_7 \cdot \frac{\dt}{h} \| D_h \tilde{x}^n \|_2^2
  \le   C \tilde{C}_7 C_2 {\cal C}^2 ( \dt^3 + h^3 )^2 .
       \label{convergence-rough-8-9}
\end{eqnarray}
Then we arrive at a rough estimate for the numerical error function at time step $t^{n+1}$:
\begin{eqnarray}
   &&
   \frac{\tilde{C}_8}{2} \| \tilde{x}^{n+1} \|_2^2
   \le \Big( C \tilde{C}_7 C_2 {\cal C}^2 + 1 \Big) ( \dt^3 + h^3 )^2 ,  \quad
   \mbox{provided that $\dt$ and $h$ are sufficiently small} ,  \nonumber
 \\
   &&  \mbox{i.e.} \, \, \,
   \| \tilde{x}^{n+1} \|_2
   \le \hat{C} ( \dt^3 + h^3 ) ,  \quad \mbox{with} \, \, \,
   \hat{C} :=
   \Big( \frac{2 ( C \tilde{C}_7 C_2 {\cal C}^2 + 1)}{\tilde{C}_8} \Big)^\frac12 , 
   \label{convergence-rough-8-10}
\end{eqnarray}
under the linear refinement requirement $C_1 h \le \dt \le C_2 h$. Subsequently, an application of 1-D inverse inequality implies that
\begin{eqnarray}
  &&
   \| D_h \tilde{x}^{n+1} \|_\infty
  \le \frac{C \| \tilde{x}^{n+1} \|_2 }{h^\frac32}
  \le \hat {C}_1 ( \dt^\frac32 + h^\frac32 )  ,   \quad
  \mbox{with $\hat{C}_1 = C \hat{C}$} , \label{convergence-rough-9-1}
\end{eqnarray}
under the same linear refinement requirement. Because of the accuracy order, we could take $\dt$ and $h$ sufficient small so that $\hat {C}_1 ( \dt^\frac32 + h^\frac32 ) \le \frac{\epsilon_0^*}{2}$, which in turn gives
\begin{eqnarray}
  \frac{\epsilon_0^*}{2}  \le D_h x^{n+1}  =  D_h W^{n+1} -  D_h \tilde{x}^{n+1}
  \le C^* + \frac{\epsilon_0^*}{2} = \tilde{C}^* ,  \label{convergence-rough-10}
\end{eqnarray}
in which the lower and upper bounds~\eqref{assumption:separation-2}, \eqref{assumption:constructed bound} (for the constructed profile $W$) have been used again. Such a uniform $\| \cdot \|_{W_h^{1,\infty}}$ bound will play a very important role in the refined error estimate.

\begin{rem}
In the rough error estimate~\eqref{convergence-rough-8-10}, we see that the accuracy order is lower than the one given by the a-priori-assumption~\eqref{a priori-1}. Therefore, such a rough estimate could not be used for a global induction analysis. Instead, the purpose of such an estimate is to establish a uniform $\| \cdot \|_{W_h^{1,\infty}}$ bound for the numerical solution at time step $t^{n+1}$, as well as its temporal derivative, via the technique of inverse inequality. With these bounds established for the numerical solution, the refined error analysis will yield much sharper estimates.
\end{rem}

\subsection{A further rough error estimate}

Meanwhile, we have to derive a discrete $W_h^{1,\infty}$ bound for the second order temporal derivative of the numerical solution at time step $t^{n+1}$, which will be needed in the refined error estimate. In fact, such a bound could not be obtained by~\eqref{convergence-rough-9-1}. To obtain such a bound, we have to perform a further rough error estimate.

  We revisit the proof of Lemma~\ref{lem:nonlinear-1} and discover that, $\xi^{(3)}_{i+\nicefrac12}$ has to be between $D_h W^{n+1}$ and $D_h x^{n+1}$, based on the representation~\eqref{nonlinear est-1-1} and the property~\eqref{q convexity-2}. In more details, the regularity assumption~\eqref{assumption:constructed bound} (for $W$) and the $W_h^{1\infty}$ bound~\eqref{convergence-rough-10} imply a similar bound for $\xi^{(3)}_{i+\nicefrac12}$:
\begin{eqnarray}
  \frac{1}{2 (\tilde{C}^*)^2} \le \xi^{(4)}_{i+\nicefrac12} \le 2 ( \epsilon_0^*)^{-2}  .  \label{convergence-rough-11}
\end{eqnarray}
With such a bound at hand, we are able to rewrite the inner product in a more precise way:
\begin{eqnarray}
  &&
  f_0 (X_{i+\nicefrac12}) {\cal NLE}^{n+\nicefrac12}_{i+\nicefrac12}
  \cdot 2 D_h \tilde{x}^{n+1}_{i+\nicefrac12}
   =   2 f_0 (X_{i+\nicefrac12}) \Big( \xi^{(3)}_{i+\nicefrac12}
   ( D_h \tilde{x}^{n+1}_{i+\nicefrac12}  )^2
  + \xi^{(4)}_{i+\nicefrac12} D_h \tilde{x}^{n+1}_{i+\nicefrac12}
  \cdot D_h \tilde{x}^n_{i+\nicefrac12}  \Big)  \nonumber
\\
  &\ge&
  2 f_0 (X_{i+\nicefrac12}) \Big( \frac{1}{2 (\tilde{C}^*)^2}
     ( D_h \tilde{x}^{n+1}_{i+\nicefrac12}  )^2
  -  2 ( \epsilon_0^*)^{-2} D_h \tilde{x}^{n+1}_{i+\nicefrac12}
  \cdot D_h \tilde{x}^n_{i+\nicefrac12}  \Big)  \nonumber
\\
  &\ge&
  2 f_0 (X_{i+\nicefrac12}) \cdot \Big(
  \frac{1}{4 (\tilde{C}^*)^2}   ( D_h \tilde{x}^{n+1}_{i+\nicefrac12}  )^2
  -  4 (\tilde{C}^*)^2 ( \epsilon_0^*)^{-4}
  \cdot ( D_h \tilde{x}^n_{i+\nicefrac12} )^2  \Big)  ,   \quad
  \mbox{which in turn gives}   \nonumber 
\\
  &&
  \Big\langle f_0 (X) {\cal NLE}^{n+\nicefrac12} ,  2 D_h \tilde{x}^{n+1}  \Big\rangle
  \ge \tilde{C}_9 \| \tilde{x}^{n+1} \|_2^2 - \tilde{C}_{10} \| D_h \tilde{x}^n \|_2^2 ,      \label{convergence-rough-12-1}
\\
  &&
  \mbox{with} \, \, \,
  \tilde{C}_9 = \frac{1}{2 (\tilde{C}^*)^2}  \min_\Omega ( f_0 (X)) ,  \, \, \,
  \tilde{C}_{10} = 8 (\tilde{C}^*)^2 ( \epsilon_0^*)^{-4} \| f_0 (X) \|_\infty .
   \nonumber
\end{eqnarray}
A substitution of this updated estimate yields a rewritten inequality for~\eqref{convergence-rough-8-1}:
\begin{eqnarray}
   &&
   \Bigl\langle \frac{ (S_h (x^n , x^{n-1} )  )^{m-1} }{m ( f_0(X) )^{m-2} }
   - \dt \Big( \frac{\tilde{C}_6}{2}  + 1 + \tilde{C}_5 \dt  \Big) ,
    ( \tilde{x}^{n+1} )^2 \Bigr\rangle
    + \tilde{C}_9 \dt \| D_h  \tilde{x}^{n+1} \|_2^2  \nonumber
\\
  &\le&
     \Bigl\langle \frac{ (S_h (x^{n-1} , x^{n-2} )  )^{m-1} }{m ( f_0(X) )^{m-2} }  ,
    ( \tilde{x}^n )^2 \Bigr\rangle  \Bigr)
    + \dt \tilde{C}_4 \| \tilde{x}^n \|_2^2
    + ( A_0 + 4 (\epsilon_0^*)^{-4} ) \dt^2 \| D_x \tilde{x}^n \|_2^2  \nonumber
\\
  &&
    + \dt \| \tau^n \|_2^2
    + \frac{\tilde{C}_6}{2} \dt \| D_h \breve{\tilde{x}}^{n+\nicefrac12} \|_2^2
    + \tilde{C}_{10} \dt \| D_h \tilde{x}^n \|_2^2 .
    \label{convergence-rough-13-1}
\end{eqnarray}
Meanwhile, all other estimates~\eqref{convergence-rough-8-2}-\eqref{convergence-rough-8-8} are still valid, then we arrive at
\begin{eqnarray}
  \frac{\tilde{C}_8}{2} \| \tilde{x}^{n+1} \|_2^2
  + \tilde{C}_9 \dt \| D_h  \tilde{x}^{n+1} \|_2^2
  &\le& \frac{\tilde{C}_6}{2} \dt \| D_h \breve{\tilde{x}}^{n+\nicefrac12} \|_2^2
  + \tilde{C}_{10} \dt \| D_h \tilde{x}^n \|_2^2  + O (\dt^2 (\dt^3 + h^3)^2)
  \nonumber
\\
  &\le&
  \tilde{C}_{11} \dt ( \| D_h \tilde{x}^n \|_2^2  + \| D_h \tilde{x}^{n-1} \|_2^2 )
  + O (\dt^2 (\dt^3 + h^3)^2)   \nonumber
\\
  &\le&
  \tilde{C}_{12} \dt (\dt^3 + h^3)^2 , \label{convergence-rough-13-1}
\end{eqnarray}
with $\tilde{C}_{11} = \tilde{C}_9 + \frac{9 \tilde{C}_6}{4}$, $\tilde{C}_{12} = C \tilde{C}_{11} {\cal C}^2 + 1$, provided that $\dt$ and $h$ are sufficiently small. This in turn results in a further rough estimate for $\tilde{x}^{n+1}$:
\begin{eqnarray}
    \| D_h  \tilde{x}^{n+1} \|_2   \le \hat{C}_2 ( \dt^3 + h^3) ,  \quad \mbox{with} \, \, \,
   \hat{C}_2 :=  \Big( \frac{\tilde{C}_{12} }{\tilde{C}_9} \Big)^\frac12 .
   \label{convergence-rough-13-2}
\end{eqnarray}
As a consequence, an application of 1-D inverse inequality gives a sharper estimate for $\| D_h \tilde{x}^{n+1} \|_\infty$: 
\begin{eqnarray}
  &&
   \| D_h \tilde{x}^{n+1} \|_\infty
  \le \frac{C \| D_h \tilde{x}^{n+1} \|_2 }{h^\frac12}
  \le \hat {C}_3 ( \dt^\frac52 + h^\frac52 )  ,   \quad
  \mbox{with $\hat{C}_3 = C \hat{C}_2$} , \label{convergence-rough-13-3}
\\
  &&
   \| D_h D_t^2 \tilde{x}^n  \|_\infty
  \le ( \hat {C}_3 + 1) ( \dt^\frac32 + h^\frac32 )
  \le \dt ,   \, \, \, D_t^2 \tilde{x}^n := \frac{\tilde{x}^{n+1}
  - 2 \tilde{x}^n + \tilde{x}^{n-1}}{\dt^2}   , \label{convergence-rough-13-4}
\end{eqnarray}
in combination with~\eqref{prelim bound-2} , under the same linear refinement requirement. This $\| \cdot \|_{W_h^{1,\infty}}$ bound for the second order temporal derivative will play a very important role in the refined error estimate.

\subsection{The refined error estimate}

Now we proceed with the refined error estimate. Taking a discrete inner product with~\eqref{error equation-1} by $2 \tilde{x}^{n+\nicefrac12} = \tilde{x}^{n+1} + \tilde{x}^n$ leads to
\begin{eqnarray}
  &&
   \Bigl\langle \frac{ (S_h (x^n , x^{n-1} )  )^{m-1} }{m ( f_0(X) )^{m-2} }
   \cdot \frac{\tilde{x}^{n+1} - \tilde{x}^n}{\dt} , \tilde{x}^{n+1} + \tilde{x}^n \Bigr\rangle
   +  A_0 \dt \langle D_h ( \tilde{x}^{n+1} - \tilde{x}^n ) ,
   D_h ( \tilde{x}^{n+1}  + \tilde{x}^n ) \rangle
   \nonumber
\\
  &&
  - \Bigl\langle  f_{0}(X) \Big(
    \frac{\ln (D_h W^{n+1}) - \ln ( D_h W^n) }{D_h ( W^{n+1} - W^n ) }
  - \frac{\ln (D_h x^{n+1}) - \ln ( D_h x^n) }{D_h ( x^{n+1} - x^n ) }  \Big) ,
  D_h ( \tilde{x}^{n+1} + \tilde{x}^n ) \Bigr\rangle \nonumber
\\
  &=&
   - \Bigl\langle \frac{ (S_h (W^n , W^{n-1} )  )^{m-1} - (S_h (x^n , x^{n-1} )  )^{m-1} }{m ( f_0(X) )^{m-2} }  \cdot \frac{W^{n+1} - W^{n}}{\dt}  ,
     2 \tilde{x}^{n+\nicefrac12} \Bigr\rangle  \nonumber
\\
  &&
  - \dt^2 \Bigl\langle \frac{D_h \tilde{x}^{n+1} }{ D_h W^{n+1} D_h x^{n+1}}
  - \frac{D_h \tilde{x}^n}{D_h W^n D_h x^n}  ,
   D_h ( \tilde{x}^{n+1} + \tilde{x}^n ) \Bigr\rangle
  + \langle \tau^n ,  \tilde{x}^{n+1} + \tilde{x}^n  \rangle .
  \label{convergence-1}
\end{eqnarray}

For the temporal derivative term, the equality $(\tilde{x}^{n+1} + \tilde{x}^n ) (\tilde{x}^{n+1} - \tilde{x}^n) = (\tilde{x}^{n+1})^2 - (\tilde{x}^n)^2$ implies a similar estimate as in~\eqref{convergence-rough-2-2}-\eqref{convergence-rough-2-3}:
\begin{eqnarray}
  &&
  \Bigl\langle \frac{ (S_h (x^n , x^{n-1} )  )^{m-1} }{m ( f_0(X) )^{m-2} }
   \cdot \frac{\tilde{x}^{n+1} - \tilde{x}^n}{\dt} ,
   \tilde{x}^{n+1} + \tilde{x}^n \Bigr\rangle   \nonumber
 \\
   &=&
   \frac{1}{\dt}  \Bigl(
   \Bigl\langle \frac{ (S_h (x^n , x^{n-1} )  )^{m-1} }{m ( f_0(X) )^{m-2} }  ,
    ( \tilde{x}^{n+1} )^2 \Bigr\rangle
   - \Bigl\langle \frac{ (S_h (x^n , x^{n-1} )  )^{m-1} }{m ( f_0(X) )^{m-2} }  ,
    ( \tilde{x}^n )^2 \Bigr\rangle  \Bigr)   \nonumber
\\
  &\ge&
  \frac{1}{\dt}  \Bigl(
   \Bigl\langle \frac{ (S_h (x^n , x^{n-1} )  )^{m-1} }{m ( f_0(X) )^{m-2} }  ,
    ( \tilde{x}^{n+1} )^2 \Bigr\rangle
   - \Bigl\langle \frac{ (S_h (x^{n-1} , x^{n-2} )  )^{m-1} }{m ( f_0(X) )^{m-2} }  ,
    ( \tilde{x}^n )^2 \Bigr\rangle  \Bigr)
    - \tilde{C}_4 \| \tilde{x}^n \|_2^2  .    \label{convergence-2}
\end{eqnarray}

For the second term on the left hand side, the second term on the right hand side of~\eqref{convergence-1}, and the local truncation error terms, the following bounds could be similarly derived:
\begin{eqnarray}
  &&
   A_0 \dt \langle D_h ( \tilde{x}^{n+1} - \tilde{x}^n ) ,
   D_h ( \tilde{x}^{n+1} + \tilde{x}_n )  \rangle
   =  A_0 \dt ( \| D_h  \tilde{x}^{n+1} \|_2^2  - \| D_h \tilde{x}^n \|_2 ) ,
     \label{convergence-3}
\\
  &&
  2 \langle \tau^n ,  \tilde{x}^{n+1}  + \tilde{x}^n  \rangle
   \le \| \tau^n \|_2^2 +  \| \tilde{x}^{n+\nicefrac12} \|_2^2
   \le \| \tau^n \|_2^2 +  \frac12 ( \| \tilde{x}^{n+1} \|_2^2
   + \| \tilde{x}^n \|_2^2 ) ,   \label{convergence-4}
\\
  &&
  - \dt^2 \Bigl\langle \frac{D_h \tilde{x}^{n+1} }{ D_h W^{n+1} D_h x^{n+1}}
  - \frac{D_h \tilde{x}^n}{D_h W^n D_h x^n}  ,
   D_h ( \tilde{x}^{n+1} + \tilde{x}^n ) \Bigr\rangle  \nonumber
\\
        &\le& 2 \dt^2 \cdot \frac{1}{\frac12 (\epsilon_0^*)^2}
     \| D_h \tilde{x}^n \|_2 \cdot \| D_h \tilde{x}^{n+1} \|_2
     - \dt^2 \cdot \frac{1}{\frac12 (\epsilon_0^*)^2}
     \| D_h \tilde{x}^n \|_2^2   \nonumber
\\
  &\le&
  2 \dt^2 (\epsilon_0^*)^{-2} ( \| D_h \tilde{x}^{n+1} \|_2^2
  + \| D_h \tilde{x}^n \|_2^2 )
  + 2 \dt^2 (\epsilon_0^*)^{-2}  \| D_h \tilde{x}^n \|_2^2   \nonumber
\\
  &\le&
    2 \dt^2 (\epsilon_0^*)^{-2} ( \| D_h \tilde{x}^{n+1} \|_2^2
  + 2 \| D_h \tilde{x}^n \|_2^2 ) ,
   \label{convergence-5}
\end{eqnarray}
in which the separation property~\eqref{assumption:separation-2} (for the constructed profile $W$), the preliminary estimates~\eqref{prelim bound-3}, \eqref{convergence-rough-10} (for $x^n$ and $x^{n+1}$, respectively), have been used in~\eqref{convergence-5}.

For the first term on the right hand side of~\eqref{convergence-1}, we cannot count on the estimate~\eqref{convergence-rough-6-1}, since there is no stability control for  $\| D_h \breve{\tilde{x}}^{n+\nicefrac12}  \|_2^2 = \| D_h ( \frac32 \tilde{x}^n - \frac12 \tilde{x}^{n-1} ) \|_2^2$.   
To overcome this difficulty, a summation by parts formula is applied, due to the fact that $\tilde{x}^{n+\nicefrac12}_0 = \tilde{x}^{n+\nicefrac12}_M =0$:
\begin{eqnarray}
  &&
   - 2 \Bigl\langle \frac{ (S_h (W^n , W^{n-1} )  )^{m-1} - (S_h (x^n , x^{n-1} )  )^{m-1} }{m ( f_0(X) )^{m-2} }  \cdot \frac{W^{n+1} - W^{n}}{\dt}  ,
     \tilde{x}^{n+\nicefrac12} \Bigr\rangle   \nonumber
\\
  &=&
   - 2 \Bigl\langle \frac{ {\cal N}^{n+\nicefrac12}  \widetilde{D}_h \breve{\tilde{x}}^{n+\nicefrac12} }{m ( f_0(X) )^{m-2} }  \cdot \frac{W^{n+1} - W^{n}}{\dt}  ,
     \tilde{x}^{n+\nicefrac12} \Bigr\rangle    \nonumber
\\
  &=&
    2 \Bigl\langle \widetilde{D}_h \Bigl( \frac{ {\cal N}^{n+\nicefrac12}  \frac{W^{n+1} - W^{n}}{\dt} }{m ( f_0(X) )^{m-2} }  \cdot \tilde{x}^{n+\nicefrac12}  \Bigr)  ,
       \breve{\tilde{x}}^{n+\nicefrac12}   \Bigr\rangle .   \label{convergence-6-1}
\end{eqnarray}
Meanwhile, the following observation is made in the finite difference space:
\begin{eqnarray}
  \hspace{-0.4in} &&
  \nrm{ \widetilde{D}_h \Bigl( \frac{ {\cal N}^{n+\nicefrac12}  \frac{W^{n+1} - W^{n}}{\dt} }{m ( f_0(X) )^{m-2} }  \cdot \tilde{x}^{n+\nicefrac12}  \Bigr)  }_2
  \le \nrm{ D_h \Bigl( \frac{ {\cal N}^{n+\nicefrac12}  \frac{W^{n+1} - W^{n}}{\dt} }{m ( f_0(X) )^{m-2} }  \cdot \tilde{x}^{n+\nicefrac12} \Bigr) }_2   \nonumber
\\
  \hspace{-0.4in} &\le&
   \frac1m  ( \|  \tilde{x}^{n+\nicefrac12}  \|_2 +  \| D_h \tilde{x}^{n+\nicefrac12}  \|_2 )
  \cdot (  \|  {\cal N}^{n+\nicefrac12} \|_\infty
  +  \| D_h {\cal N}^{n+\nicefrac12}  \|_\infty ) \nonumber
\\
  \hspace{-0.4in} &&
  \cdot \Big(  \| \frac{W^{n+1} - W^{n}}{\dt}  \|_\infty
  + \| \frac{D_h ( W^{n+1} - W^{n}) }{\dt}  \|_\infty \Big)   \nonumber
\\
   \hspace{-0.4in}  &&
  \cdot \Bigl(  \| \frac{1}{( f_0 (X) )^{m-2} }  \|_\infty
  + \|  D_h \Big( \frac{1}{( f_0 (X) )^{m-2} }  \Big) \|_\infty \Big)   \nonumber
\\
  \hspace{-0.4in} &\le&
   \frac1m  (\tilde{C}_2 + \tilde{C}_3 ) \tilde{C}_{13} C^*
   ( \|  \tilde{x}^{n+\nicefrac12}  \|_2 +  \| D_h \tilde{x}^{n+\nicefrac12}  \|_2 ) ,
   \label{convergence-6-2}
\\
  &&
   \mbox{with} \, \, \, \tilde{C}_{13} := \| \frac{1}{( f_0 (X) )^{m-2} }  \|_\infty
  + \| D_h \Big( \frac{1}{( f_0 (X) )^{m-2} }  \Big) \|_\infty ,  \nonumber
\end{eqnarray}
in which the preliminary $W_h^{1,\infty}$ estimates~\eqref{prelim est-2}, \eqref{prelim est-3}, and the regularity assumption~\eqref{assumption:constructed bound} have been repeatedly used in the derivation. Then we arrive at
\begin{eqnarray}
  &&
   - 2 \Bigl\langle \frac{ (S_h (W^n , W^{n-1} )  )^{m-1} - (S_h (x^n , x^{n-1} )  )^{m-1} }{m ( f_0(X) )^{m-2} }  \cdot \frac{W^{n+1} - W^{n}}{\dt}  ,
     \tilde{x}^{n+\nicefrac12} \Bigr\rangle   \nonumber
\\
  &\le&
   2 \tilde{C}_{14}
   ( \|  \tilde{x}^{n+\nicefrac12}  \|_2 +  \| D_h \tilde{x}^{n+\nicefrac12}  \|_2 )
   \| \breve{\tilde{x}}^{n+\nicefrac12} \|_2 ,   \quad
   (\mbox{with} \, \, \,
   \tilde{C}_{14} =  \frac1m  (\tilde{C}_2 + \tilde{C}_3 ) \tilde{C}_9 C^*) \nonumber
\\
  &\le&
  \Bigl( \tilde{C}_{14} + \tilde{C}_{14}^2 ( \tilde{C}^* )^2  \Big)
  \| \breve{\tilde{x}}^{n+\nicefrac12} \|_2^2
  + \tilde{C}_{14} \|  \tilde{x}^{n+\nicefrac12}  \|_2^2
  +  (\tilde{C}^*)^{-2} \| D_h \tilde{x}^{n+\nicefrac12}  \|_2^2 .
     \label{convergence-6-3}
\end{eqnarray}

Again, the rest work is focused on the error analysis associated with the nonlinear diffusion part, as given by~\eqref{notation-3}. However, the point-wise estimate~\eqref{nonlinear est-0-1}, \eqref{nonlinear est-0-2} is not useful in the refined analysis any more. Instead, we begin with the following application of higher order Taylor expansion for $\ln x$, around $\frac{x + x_0}{2}$:
\begin{eqnarray}
  &&
   \frac{\ln x - \ln x_0 }{x - x_0 }
   = \frac{1}{\frac{x + x_0}{2} }
   + \frac{2}{3  ( \frac{x + x_0}{2} )^3 } \cdot \frac{(x - x_0)^2}{8}
   +  \Big( \frac{1}{5  ( \xi^{(5)} )^5 } + \frac{1}{5  ( \xi^{(6)} )^5 }  \Big)
   \cdot \frac{(x - x_0)^4}{32}  ,  \label{Taylor-1}
\end{eqnarray}
with $\xi^{(5)}$ between $\frac{x+x_0}{2}$ and $x$, $\xi^{(6)}$ between $\frac{x+x_0}{2}$ and $x_0$. This in turn gives
\begin{eqnarray}
   \frac{\ln (D_h W^{n+1}) - \ln ( D_h W^n) }{D_h ( W^{n+1} - W^n ) }
   &=&  \frac{1}{D_h ( \frac{W^{n+1} + W^n}{2} ) }
   + \frac{2}{3  ( D_h \frac{ W^{n+1} + W^n}{2} )^3 }
   \cdot \frac{(D_h ( W^{n+1} - W^n) )^2}{8}   \nonumber
\\
  &&
   +  \Big( \frac{1}{5  ( \eta^{(1)} )^5 } + \frac{1}{5  ( \eta^{(2)} )^5 }  \Big)
   \cdot \frac{(D_h ( W^{n+1} - W^n) )^4}{32} ,
    \label{convergence-7-1}
\\
  \frac{\ln (D_h x^{n+1}) - \ln ( D_h x^n) }{D_h ( x^{n+1} - x^n ) }
   &=&  \frac{1}{D_h ( \frac{x^{n+1} + x^n}{2} ) }
   + \frac{2}{3  ( D_h \frac{ x^{n+1} + x^n}{2} )^3 }
   \cdot \frac{(D_h ( x^{n+1} - x^n) )^2}{8}   \nonumber
\\
  &&
   +  \Big( \frac{1}{5  ( \eta^{(3)} )^5 } + \frac{1}{5  ( \eta^{(4)} )^5 }  \Big)
   \cdot \frac{(D_h ( x^{n+1} - x^n) )^4}{32} ,
    \label{convergence-7-2}
\end{eqnarray}
with
\begin{eqnarray}
  &&
  \eta^{(1)} \, \, \mbox{between} \, \, D_h ( \frac{W^{n+1} + W^n}{2} ) \, \,
  \mbox{and} \, \, D_h W^{n+1} ,  \quad
  \eta^{(2)} \, \, \mbox{between} \, \, D_h ( \frac{W^{n+1} + W^n}{2} ) \, \,
  \mbox{and} \, \, D_h W^n ,   \nonumber
\\
  &&
  \eta^{(3)} \, \, \mbox{between} \, \, D_h ( \frac{x^{n+1} + x^n}{2} ) \, \,
  \mbox{and} \, \, D_h x^{n+1} ,  \quad
  \eta^{(4)} \, \, \mbox{between} \, \, D_h ( \frac{x^{n+1} + x^n}{2} ) \, \,
  \mbox{and} \, \, D_h x^n .   \nonumber
\end{eqnarray}
Then we arrive at a decomposition for ${\cal NLE}^{n+\nicefrac12}$:
\begin{eqnarray}
  {\cal NLE}^{n+\nicefrac12} &=&
  {\cal NL}^{(1)} + {\cal NL}^{(2)} + {\cal NL}^{(3)} ,  \quad \mbox{with} \nonumber
\\
  {\cal NL}^{(1)} &=&  - \frac{1}{D_h ( \frac{W^{n+1} + W^n}{2} ) }
  +  \frac{1}{D_h ( \frac{x^{n+1} + x^n}{2} ) }
  =  \frac{D_h \tilde{x}^{n+\nicefrac12} }{D_h ( \frac{W^{n+1} + W^n}{2} ) \cdot D_h ( \frac{x^{n+1} + x^n}{2} ) }  ,  \label{convergence-7-3}
\\
  {\cal NL}^{(2)} &=&  \frac{1}{12} \Big(
  - \frac{(D_h ( W^{n+1} - W^n) )^2}{( D_h \frac{ W^{n+1} + W^n}{2} )^3 }
  + \frac{(D_h ( x^{n+1} - x^n) )^2}{( D_h \frac{ x^{n+1} + x^n}{2} )^3 }  \Big) ,
  \label{convergence-7-4}
\\
  {\cal NL}^{(3)} &=&  \frac{1}{160} \Big(
  - \Big( \frac{1}{( \eta^{(1)} )^5 } + \frac{1}{( \eta^{(2)} )^5 }  \Big)
  (D_h ( W^{n+1} - W^n) )^4    \nonumber
\\
  &&
  + \Big( \frac{1}{( \eta^{(3)} )^5 } + \frac{1}{( \eta^{(4)} )^5 }  \Big)
  (D_h ( x^{n+1} - x^n) )^4 \Big) .
  \label{convergence-7-5}
\end{eqnarray}
For the leading expansion ${\cal NL}^{(1)}$, the following nonlinear estimate is available:
\begin{eqnarray}
  \langle  {\cal NL}^{(1)} , 2 D_h \tilde{x}^{n+\nicefrac12} \rangle
  &=& \Big\langle \frac{ 2 }{D_h ( \frac{W^{n+1} + W^n}{2} ) \cdot D_h ( \frac{x^{n+1} + x^n}{2} ) }  ,  ( D_h \tilde{x}^{n+\nicefrac12} )^2 \Big\rangle   \nonumber
\\
  &\ge&
  \frac{2}{( \tilde{C}^*)^2 }  \|  D_h \tilde{x}^{n+\nicefrac12}  \|_2^2 ,
  \label{convergence-7-6}
\end{eqnarray}
with repeated applications of~\eqref{prelim est-2}, \eqref{prelim est-3} and~\eqref{assumption:constructed bound}. The second expansion ${\cal NL}^{(2)}$ could have a further decomposition:  ${\cal NL}^{(2)} = {\cal NL}^{(2),1}  + {\cal NL}^{(2),2}$, with
\begin{eqnarray}
  {\cal NL}^{(2),1} &=& - \frac{ D_h ( W^{n+1} - W^n + x^{n+1} - x^n)
  \cdot D_h (\tilde{x}^{n+1} - \tilde{x}^n ) }{( D_h  W^{n+\nicefrac12}  )^3 }  ,
  \label{convergence-7-7}
\\
  {\cal NL}^{(2),2} &=&  {\cal NLC}^{(2)}
  \cdot D_h \tilde{x}^{n+\nicefrac12} \cdot ( D_h (x^{n+1} - x^n ) )^2 ,
  \label{convergence-7-8}
\\
  {\cal NLC}^{(2)} &=& \frac{ ( D_h W^{n+\nicefrac12} )^2 + ( D_h W^{n+\nicefrac12} ) ( D_h x^{n+\nicefrac12} ) + ( D_h  x^{n+\nicefrac12} )^2 ) }{( D_h W^{n+\nicefrac12} )^3
  ( D_h x^{n+\nicefrac12} )^3} .   \label{convergence-7-9}
\end{eqnarray}
We notice a bound for the nonlinear coefficient ${\cal NLC}^{(2)}$:
\begin{eqnarray}
  |  {\cal NLC}^{(2)} |  \le  \frac{3 ( \tilde{C}^* )^2 }{\frac12 ( \epsilon_0^*)^2 }
  = 6 ( \tilde{C}^* )^2 ( \epsilon_0^*)^{-2} := \tilde{C}_{15} ,  \quad
  \mbox{by~\eqref{assumption:separation-2}, \eqref{assumption:constructed bound}, \eqref{prelim bound-3}, \eqref{convergence-rough-10} } .  \label{convergence-7-10}
\end{eqnarray}
This in turn yields an estimate for the term associated with ${\cal NL}^{(2),2}$:
\begin{eqnarray}
   \langle  {\cal NL}^{(2), 1} , 2 D_h \tilde{x}^{n+\nicefrac12}  \rangle
   &\ge&  - 2 \tilde{C}_{16} \dt \| D_h \tilde{x}^{n+\nicefrac12}  \|_2^2  ,
   \quad \mbox{with $\tilde{C}_{16} := \tilde{C}_{15} (C^* + 1)^2$} ,
   \label{convergence-7-11}
\end{eqnarray}
in which a point-wise bound $\| D_x (x^{n+1} - x^n) \|_\infty   \le (C^* +1) \dt$ comes from
\eqref{assumption:constructed bound}, \eqref{prelim bound-3} and \eqref{convergence-rough-9-1}. For ${\cal NL}^{(2),1}$, we introduce the following discrete function
\begin{eqnarray}
  \gamma^{n+\nicefrac12} &:=& - \frac{ \frac{ D_h ( W^{n+1} - W^n ) }{\dt}
  + \frac{ D_h ( x^{n+1} - x^n) }{\dt} }{( D_h  W^{n+\nicefrac12}  )^3 }  ,   \quad
  \mbox{so that}
  \label{convergence-7-12}
\\
   \langle  {\cal NL}^{(2), 1} , D_h ( \tilde{x}^{n+1} + \tilde{x}^n )  \rangle
   &=& \dt \langle  \gamma^{n+\nicefrac12} ,
   ( D_h \tilde{x}^{n+1} )^2 - ( D_h \tilde{x}^n )  )^2 \rangle \nonumber
\\
  &=&
  \dt ( \langle  \gamma^{n+\nicefrac12} , ( D_h \tilde{x}^{n+1} )^2  \rangle
   - \langle  \gamma^{n-\nicefrac12} , ( D_h \tilde{x}^n )  )^2 \rangle )   \nonumber
\\
  &&
  - \dt \langle  \gamma^{n+\nicefrac12} - \gamma^{n-\nicefrac12} ,
  ( D_h \tilde{x}^n )  )^2 \rangle .  \label{convergence-7-13}
\end{eqnarray}
For the last correction term, the following observation is made:
\begin{eqnarray}
  \gamma^{n+\nicefrac12} - \gamma^{n-\nicefrac12}
  &=&   - \frac{ \dt D_h ( D_t^2 W^n)
  + \dt D_h ( D_t^2 x^n) }{( D_h  W^{n+\nicefrac12}  )^3 }
  \nonumber
\\
  &&
  + {\cal NLC}^{(2)} \cdot \Big( \frac{D_h ( W^{n+1} - W^n ) }{\dt}
  + \frac{ D_h ( x^{n+1} - x^n)}{\dt}  \Big)
  \cdot D_h \tilde{x}^{n+\nicefrac12}   .  \label{convergence-7-14}
\end{eqnarray}
Meanwhile, a $\| \cdot \|_{W_h^{1,\infty}}$ bound for $D_t^2 x^n$ is available, as a result of the further rough estimate~\eqref{convergence-rough-13-4} and the regularity assumption~\eqref{assumption:constructed bound} for W:
\begin{eqnarray}
  \| D_h ( D_t^2 x^n ) \|_\infty
  \le  \| D_h ( D_t^2 W^n ) \|_\infty  +   \| D_h ( D_t^2 \tilde{x}^n ) \|_\infty
  \le C^* + \frac{\epsilon_0^*}{2} = \tilde{C}^* .
  \label{convergence-7-15}
\end{eqnarray}
This in turn implies an $O(\dt)$ estimate for the first part, in combination with~\eqref{assumption:separation-2}
\begin{eqnarray}
    \dt \Big| \frac{ D_h ( D_t^2 W^n)
  + D_h ( D_t^2 x^n) }{( D_h  W^{n+\nicefrac12}  )^3 }   \Big|
  \le  \frac{C^* + \tilde{C}^*}{(\epsilon_0^*)^3 } \dt .
  \label{convergence-7-16}
\end{eqnarray}
A similar bound for the second part is also available, which comes from~\eqref{assumption:constructed bound}, \eqref{prelim bound-2}, \eqref{convergence-rough-13-3} and \eqref{convergence-7-10}:
\begin{eqnarray}
  &&
    \Big| {\cal NLC}^{(2)} \cdot \Big( \frac{D_h ( W^{n+1} - W^n ) }{\dt}
  + \frac{ D_h ( x^{n+1} - x^n)}{\dt}  \Big)
  \cdot D_h \tilde{x}^{n+\nicefrac12}  \Big|   \nonumber
\\
  &\le&
   (2 C^* +1 ) \tilde{C}_{15} ( \hat{C} + \hat{C}_3 ) (\dt^\frac52 + h^\frac52)
  \le \dt .   \label{convergence-7-17}
\end{eqnarray}
Therefore, an $O (\dt)$ bound for $\gamma^{n+\nicefrac12} - \gamma^{n-\nicefrac12}$
is obtained:
\begin{eqnarray}
  \|  \gamma^{n+\nicefrac12} - \gamma^{n-\nicefrac12} \|_\infty
  \le \tilde{C}_{17} \dt ,   \quad \mbox{with} \, \, \,
  \tilde{C}_{17}:= ( C^* + \tilde{C}^* )  (\epsilon_0^*)^{-3} + 1 ,
   \label{convergence-7-18}
\end{eqnarray}
so that the nonlinear inner product associated with ${\cal NL}^{(2),1}$ could be analyzed as follows:
\begin{eqnarray}
    \langle  {\cal NL}^{(2), 1} , D_h ( \tilde{x}^{n+1} + \tilde{x}^n )  \rangle
  &\ge&
  \dt ( \langle  \gamma^{n+\nicefrac12} , ( D_h \tilde{x}^{n+1} )^2  \rangle
   - \langle  \gamma^{n-\nicefrac12} , ( D_h \tilde{x}^n )  )^2 \rangle )    \nonumber
\\
  &&
  - \tilde{C}_{17} \dt^2 \| D_h \tilde{x}^n \|_2^2 .    \label{convergence-7-18}
\end{eqnarray}
Its combination with~\eqref{convergence-7-11} yields the nonlinear estimate for ${\cal NL}^{(2)}$:
\begin{eqnarray}
  \hspace{-0.4in}
    \langle  {\cal NL}^{(2)} , D_h ( \tilde{x}^{n+1} + \tilde{x}^n )  \rangle
  &\ge&
  \dt ( \langle  \gamma^{n+\nicefrac12} , ( D_h \tilde{x}^{n+1} )^2  \rangle
   - \langle  \gamma^{n-\nicefrac12} , ( D_h \tilde{x}^n )  )^2 \rangle )    \nonumber
\\
  \hspace{-0.4 in} &&
  - \tilde{C}_{18} \dt^2 ( \| D_h \tilde{x}^{n+1} \|_2^2 +  \| D_h \tilde{x}^n \|_2^2 ) ,   \quad
  \mbox{with $\tilde{C}_{18} = \tilde{C}_{16} + \tilde{C}_{17}$ }.
     \label{convergence-7-19}
\end{eqnarray}
The analysis for ${\cal NL}^{(3)}$ is similar for that of ${\cal NL}^{(2),2}$. We are able to obtain the following estimate; the technical details are skipped for the sake of brevity.
\begin{eqnarray}
    \langle  {\cal NL}^{(3)} , D_h ( \tilde{x}^{n+1} + \tilde{x}^n )  \rangle
  \ge
  - \tilde{C}_{19} \dt^2 ( \| D_h \tilde{x}^{n+1} \|_2^2 +  \| D_h \tilde{x}^n \|_2^2 )
  - ( \dt^4 + h^4 )^2 .
     \label{convergence-7-20}
\end{eqnarray}
A combination of~\eqref{convergence-7-6}, \eqref{convergence-7-19} and \eqref{convergence-7-20} results in an estimate for ${\cal NLE}^{n+\nicefrac12}$:
\begin{eqnarray}
    \langle  {\cal NLE}^{n+\nicefrac12} , 2 D_h \tilde{x}^{n+\nicefrac12}   \rangle
  &\ge&
  \frac{2}{( \tilde{C}^*)^2 }  \|  D_h \tilde{x}^{n+\nicefrac12}  \|_2^2
  +  \dt ( \langle  \gamma^{n+\nicefrac12} , ( D_h \tilde{x}^{n+1} )^2  \rangle
   - \langle  \gamma^{n-\nicefrac12} , ( D_h \tilde{x}^n )  )^2 \rangle )  \nonumber
\\
  &&
  - \tilde{C}_{20} \dt^2 ( \| D_h \tilde{x}^{n+1} \|_2^2 +  \| D_h \tilde{x}^n \|_2^2 )
  - ( \dt^4 + h^4 )^2 ,
     \label{convergence-7-21}
\end{eqnarray}
with $\tilde{C}_{20} = \tilde{C}_{18} + \tilde{C}_{19}$.

Finally, a substitution of~\eqref{convergence-2}-\eqref{convergence-5}, \eqref{convergence-6-3} and \eqref{convergence-7-21} into~\eqref{convergence-1} results in
\begin{eqnarray}
   &&
   \Bigl\langle \frac{ (S_h (x^n , x^{n-1} )  )^{m-1} }{m ( f_0(X) )^{m-2} } ,
    ( \tilde{x}^{n+1} )^2 \Bigr\rangle
    +  \frac{1}{( \tilde{C}^*)^2 }  \dt \|  D_h \tilde{x}^{n+\nicefrac12}  \|_2^2  \nonumber
\\
  &&
    + A_0 \dt^2 ( \| D_h \tilde{x}^{n+1} \|_2^2 -  \| D_h \tilde{x}^n \|_2^2 )
    + \dt^2 ( \langle  \gamma^{n+\nicefrac12} , ( D_h \tilde{x}^{n+1} )^2  \rangle
   - \langle  \gamma^{n-\nicefrac12} , ( D_h \tilde{x}^n )  )^2 \rangle )   \nonumber
\\
  &\le&
     \Bigl\langle \frac{ (S_h (x^{n-1} , x^{n-2} )  )^{m-1} }{m ( f_0(X) )^{m-2} }  ,
    ( \tilde{x}^n )^2 \Bigr\rangle  \Bigr)
    + \tilde{C}_{22} \dt \| \tilde{x}^{n+1} \|_2^2
    + \tilde{C}_{23} \dt  \| \tilde{x}^n \|_2^2
    + \frac{\tilde{C}_{21}}{2}  \dt \| \tilde{x}^{n-1} \|_2^2  \nonumber
\\
  &&
  +  \tilde{C}_{24} \dt^3 ( \| D_h \tilde{x}^{n+1} \|_2^2 +  \| D_h \tilde{x}^n \|_2^2 )
  + \dt ( \| \tau^n \|_2^2 + ( \dt^4 + h^4 )^2  )  ,
  \label{convergence-8-1}
\end{eqnarray}
with $\tilde{C}_{21} = \tilde{C}_{14} + \tilde{C}_{14}^2 ( \tilde{C}^* )^2$,
$\tilde{C}_{22} = \frac{\tilde{C}_{14}}{2} + \frac{\tilde{C}_6}{2}  + 2$, $\tilde{C}_{23} = \tilde{C}_4 + \frac12 + \frac{\tilde{C}_{14}}{2} + \frac{9 \tilde{C}_{21}}{2}$, $\tilde{C}_{24} = \tilde{C}_{20} + 4 (\epsilon_0^*)^{-2}$. Subsequently, a summation in time gives
\begin{eqnarray}
   &&
   \Bigl\langle \frac{ (S_h (x^n , x^{n-1} )  )^{m-1} }{m ( f_0(X) )^{m-2} } ,
    ( \tilde{x}^{n+1} )^2 \Bigr\rangle
    +  \frac{1}{( \tilde{C}^*)^2 }  \dt \sum_{k=0}^n \|  D_h \tilde{x}^{k+\nicefrac12}  \|_2^2  \nonumber
\\
  &&
    + \dt^2 \left( A_0 \| D_h \tilde{x}^{n+1} \|_2^2
    + \langle  \gamma^{n+\nicefrac12} , ( D_h \tilde{x}^{n+1} )^2  \rangle \right)    \nonumber
\\
  &\le&
      \tilde{C}_{25} \dt \sum_{k=0}^{n+1} \| \tilde{x}^k \|_2^2
  +  2 \tilde{C}_{24} \dt^3 \sum_{k=0}^{n+1} \| D_h \tilde{x}^k \|_2^2
  + C ( \dt^4 + h^4 )^2   ,
  \label{convergence-8-2}
\end{eqnarray}
with $\tilde{C}_{25} = \tilde{C}_{22}  + \tilde{C}_{23} + \frac{\tilde{C}_{21}}{2}$. Meanwhile, by the definition of $\gamma^{n+\nicefrac12}$ ~\eqref{convergence-7-12}, we have
\begin{eqnarray}
  \|  \gamma^{n+\nicefrac12} \|_\infty \le \frac{2 C^* +1}{(\epsilon_0^*)^3} , \quad
  \mbox{by~\eqref{assumption:constructed bound}, \eqref{prelim bound-3}, \eqref{convergence-rough-9-1} } .   \label{convergence-8-3}
  \end{eqnarray}
In turn, by taking $A_0 = (2 C^* +1) (\epsilon_0^*)^{-3} +1$, and making use of the inequality~\eqref{convergence-rough-8-2}, we obtain
\begin{eqnarray}
   &&
   \tilde{C}_8 \| \tilde{x}^{n+1} \|_2^2
    +  \frac{1}{( \tilde{C}^*)^2 }  \dt \sum_{k=0}^n \|  D_h \tilde{x}^{k+\nicefrac12}  \|_2^2
    + \dt^2  \| D_h \tilde{x}^{n+1} \|_2^2
      \nonumber
\\
  &\le&
      \tilde{C}_{25} \dt \sum_{k=0}^{n+1} \| \tilde{x}^k \|_2^2
  +  2 \tilde{C}_{24} \dt^3 \sum_{k=0}^{n+1} \| D_h \tilde{x}^k \|_2^2
  + C ( \dt^4 + h^4 )^2   .
  \label{convergence-8-4}
\end{eqnarray}
Therefore, an application of discrete Gronwall inequality (in the integral form) leads to the desired higher order convergence estimate
\begin{equation}
 \| \tilde{x}^{n+1} \|_2 +  \Bigl( (\tilde{C}^*)^{-2} \dt   \sum_{m=0}^{n} \| \frac12 D_h ( \tilde{x}^{m+1} + \tilde{x}^m ) \|_2^2 \Bigr)^{1/2}  \le \hat{C}_4 ( \dt^4 + h^4 ) .
	\label{convergence-8-5}
	\end{equation}
This completes the refined error estimate.

\subsection{Recovery of the a-priori assumption~\eqref{a priori-1}}

With the higher order error estimate~\eqref{convergence-8-5} at hand, we conclude that the a-priori assumption in~\eqref{a priori-1} is satisfied at the next time step $t^{n+1}$, since $\hat{C}_4$ takes a following form:
\begin{eqnarray}
  \hat{C}_4 \le C \exp
  \Big( \frac{(\tilde{C}_{25} + 2 \tilde{C}_{24} ) t^{n+1} }{\min (\tilde{C}_8, 1 ) }  \Big)
  \le \hat{C} := C \exp
  \Big( \frac{(\tilde{C}_{25} + 2 \tilde{C}_{24} ) T }{\min (\tilde{C}_8, 1 ) }  \Big) .
   \label{a priori-2}
\end{eqnarray}
We also notice that $\tilde{C}_8$, $\tilde{C}_{24}$ and $\tilde{C}_{25}$ are independent of ${\cal C}$. Therefore, the a-priori assumption in~\eqref{a priori-1} is satisfied, so that an induction analysis could be applied. This finishes the higher order convergence analysis.

Finally, the convergence estimate~\eqref{convergence-0} is a direct consequence of~\eqref{convergence-8-5}, combined with the definition~\eqref{consistency-1} of the  constructed approximate solution $W$. This completes the proof of Theorem~\ref{thm:convergence}.

\section{Convergence analysis of Newton's iteration}\label{sec:Newton}
In this section,
we prove the convergence of damped Newton's iteration \eqref{scheme-2nd-Newton} in the convex set $\mathcal{Q}$, based on $\emph{self-concordant}$ \cite{S.Boyd(2004),Y. Nesterov(1994)}.
The definition of $\emph{self-concordant}$ is given as following:
\begin{defn}\label{def:self-concordant}
Let $\mathcal{G}$ be a finite-dimensional real vector space, $\mathcal{Q}$ be an open nonempty convex subset of $\mathcal{G}$, $\Lambda : \mathcal{Q}\rightarrow\mathbb{R}$ be a function, $a>0$. $\Lambda$ is called self-concordant on $\mathcal{Q}$ with the parameter value $a$, if $\Lambda \in C^3$ is a convex function on $\mathcal{Q}$, and, for all $x\in \mathcal{Q}$ and all $u\in\mathcal{G}$, the following inequality holds:
$$|D^{3}\Lambda(x)[u,u,u]|\leq 2a^{-1/2}(D^2\Lambda(x)[u,u])^{3/2}$$
($D^{k}\Lambda(x)[u_1,\cdots,u_k]$ henceforth denotes the value of the kth differential of $\Lambda$ taken at $x$ along the collection of directions $u_1,\cdots,u_k$) \cite{S.Boyd(2004),Y. Nesterov(1994)}. 
\end{defn}
 The $\emph{self-concordant}$ function has two typical characteristic  \cite{S.Boyd(2004),Y. Nesterov(1994)}:
 \begin{itemize}
 \item Linear and (convex) quadratic functions  are evidently self-concordant, since they have zero third derivative.
 \item  A function $f:\mathcal{R}^n\rightarrow R$
is self-concordant if it is self-concordant along every line in its domain. \end{itemize}

\begin{thm}
Suppose $f_0(X)\in\mathcal{E}_{N}$ is the initial distribution with a positive lower bound for $X\in \mathcal{Q}$ and $a:=(h\min\limits_{0\leq i\leq M}f_0(X_i))/2C_{Newton}^2$ with a positive constant $C_{Newton}$,  then $F(\hat{x})$, defined in \eqref{solvability-2-1}-\eqref{solvability-2-4}, is a self-concordant function  and Newton's iteration \eqref{scheme-2nd-Newton} is convergent in $\mathcal{Q}$.
\label{thm:Newton}
\end{thm}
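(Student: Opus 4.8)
The plan is to reduce Theorem~\ref{thm:Newton} to a single statement: that the discrete functional $F(\hat x) = \sum_{j=1}^4 F_j(\hat x)$ of \eqref{solvability-2-1}--\eqref{solvability-2-4} is self-concordant on $\mathcal Q$ (identified, via the fixed endpoints, with an open convex subset of $\mathbb R^{M-1}$) with a parameter of the form $a = h\min_i f_0(X_i)/(2C_{Newton}^2)$ as in \eqref{lambda}. Once this is known, the desired convergence is exactly the classical behaviour of the damped Newton method on a self-concordant function \cite{S.Boyd(2004),Y. Nesterov(1994)}: with the step length \eqref{Newton} and the Newton decrement \eqref{lambda}, every iterate $\hat x^{n+1,k}$ of \eqref{scheme-2nd-Newton} stays in $\mathcal Q$ (the damping keeps the update inside the Dikin ellipsoid, on which $F$ is finite), $F$ decreases by at least a fixed positive amount at each step while $\lambda \ge \lambda^* = 2-\sqrt 3$, and the undamped steps ($\omega = 1$) converge quadratically once $\lambda < \lambda^*$. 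Since $F$ has a positive definite Hessian ($F_1$ is a strictly convex quadratic, so $F'' \succeq F_1'' \succ 0$) and attains an interior minimizer over $\mathcal Q$ by Theorem~\ref{thm:solvability}, it is bounded below on $\mathcal Q$; hence the damped phase terminates after finitely many steps and $\hat x^{n+1,k}$ converges to the unique critical point of $F$, i.e.\ the unique solution of \eqref{scheme-2nd-alt-1}, equivalently of \eqref{scheme-2nd-1}, all within $\mathcal Q$.

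The self-concordance claim is handled with the standard calculus of self-concordant functions. Affine functions and convex quadratics have vanishing third differential and are self-concordant for every parameter; self-concordance is invariant under precomposition with an affine map; and a finite sum is self-concordant with parameter equal to the minimum of the summands' parameters. This disposes of $F_1$, $F_3$ and the linear term $\dt^2 \langle D_h \hat x, 1/D_h x^n \rangle$ inside $F_4$. What remains is the ``nonlinear plus barrier'' part, which I would write, after pulling $f_0(X_{i-\nicefrac12})$ out of the $i$-th cell contribution, as
\begin{align*}
  \Phi &:= F_2 - \dt^2 \langle \ln ( 1 + D_h \hat x ) , {\bf 1} \rangle
  = h \sum_{i=1}^{M} f_0(X_{i-\nicefrac12}) \, \tilde\Psi_i \big( ( D_h x^{n+1} )_{i-\nicefrac12} \big) , \\
  \tilde\Psi_i(u) &:= G \big( u-1, (D_h x^n)_{i-\nicefrac12} \big) - \frac{\dt^2}{f_0(X_{i-\nicefrac12})} \ln u ,
\end{align*}
using $D_h x^{n+1} = 1 + D_h \hat x$ and the fact that $\hat x \mapsto (D_h x^{n+1})_{i-\nicefrac12}$ is affine. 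Summing the cell-wise third differentials, applying $\sum_i b_i^{3/2} \le (\sum_i b_i)^{3/2}$ for $b_i \ge 0$, and using $h f_0(X_{i-\nicefrac12}) \ge h\min_j f_0(X_j)$, one gets: if every one-variable function $\tilde\Psi_i$ is self-concordant with a common parameter $\gamma > 0$, then $\Phi$ — hence $F$ — is self-concordant with parameter $\gamma\, h\min_j f_0(X_j)$. Choosing $C_{Newton} \ge (2\gamma)^{-1/2}$ then delivers exactly the parameter $a$ of \eqref{lambda}, because self-concordance with a given parameter implies it for every smaller one. So the problem collapses to a single scalar inequality.

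The scalar estimate for $\tilde\Psi_i$ is where the real work — and the main obstacle — lies. Setting $\kappa := \dt^2/f_0(X_{i-\nicefrac12})$ (which ranges over a compact interval bounded away from $0$) and recalling $q_1(u;x_0) = -\frac{\ln u - \ln x_0}{u - x_0}$ from Lemma~\ref{lem:convexity-1}, one has $\tilde\Psi_i''(u) = q_1'(u;x_0) + \kappa u^{-2}$ and $\tilde\Psi_i'''(u) = q_1''(u;x_0) - 2\kappa u^{-3}$, with $x_0 = (D_h x^n)_{i-\nicefrac12} \in [Q^{(n),1}, Q^{(n),2}]$. By \eqref{q convexity-1} and \eqref{q convexity-3}, $q_1' > 0$ and $q_1'' \le 0$, so the two pieces of $\tilde\Psi_i'''$ carry the same sign and $|\tilde\Psi_i'''(u)| = |q_1''(u;x_0)| + 2\kappa u^{-3}$; one must show $|\tilde\Psi_i'''(u)| \le 2\gamma^{-1/2}(\tilde\Psi_i''(u))^{3/2}$ on $u \in (0, h^{-1}]$, uniformly in $x_0$ and $\kappa$. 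The scaling identity $q_1(u;x_0) = x_0^{-1} p(u/x_0)$, with $p(v) = -\ln v/(v-1)$, strips the $x_0$-dependence from the $q_1$-terms, and on any $u$-interval $[u_*, h^{-1}]$ with $u_* > 0$ the ratio $|\tilde\Psi_i'''|/(\tilde\Psi_i'')^{3/2}$ is a continuous function of $(u, x_0, \kappa)$ on a compact set, hence bounded. The delicate regime is $u \to 0^+$: there $q_1'(u;x_0) \sim (x_0 u)^{-1}$ and $|q_1''(u;x_0)| \sim (x_0 u)^{-1} u^{-1}$, so $|q_1''|/(q_1')^{3/2} \to \infty$ — the functional $F_2$ is \emph{not} self-concordant by itself, and this is precisely why the logarithmic barrier of $F_4$ must be carried inside $\tilde\Psi_i$. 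With it, the barrier term dominates as $u \to 0^+$: $\tilde\Psi_i''(u) \sim \kappa u^{-2}$ and $|\tilde\Psi_i'''(u)| \sim 2\kappa u^{-3}$, so the ratio tends to $2\kappa^{-1/2}$, bounded uniformly over the compact ranges of $\kappa$ and $x_0$. Combining the two regimes yields a single $\gamma = \gamma(\dt, h, f_0, Q^{(n),1}, Q^{(n),2}) > 0$ valid for all $i$, and all the resulting (finitely many) constants are absorbed into $C_{Newton}$.

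With self-concordance with parameter $a$ in hand, the convergence of \eqref{scheme-2nd-Newton} in $\mathcal Q$ follows from the first paragraph, once one observes that \eqref{scheme-2nd-Newton} is precisely the Newton system $F''(\hat x^{n+1,k})\delta_x = -F'(\hat x^{n+1,k})$ for $F$ (the right-hand side being the residual of \eqref{scheme-2nd-1}), that \eqref{lambda} is the associated normalized Newton decrement, and that \eqref{Newton} is the admissible damping. To emphasize where the difficulty sits: everything is routine bookkeeping or a direct consequence of Lemma~\ref{lem:convexity-1} and Theorem~\ref{thm:solvability}, except the scalar third-derivative inequality for $\tilde\Psi_i$ near $u = 0$, where one must recognize that $F_2$ alone degenerates and that the $\dt^2$-weighted $-\ln$ barrier coming from $F_4$ is exactly what restores the self-concordance bound.
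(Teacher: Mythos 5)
Your proof is correct, but it takes a genuinely different route from the paper's, and your route is in fact the one that actually closes the argument. The paper proves self-concordance of $F_2$ and $F_4$ \emph{separately}: it computes the first three derivatives of the cell contribution of $F_2$, substitutes $\xi = D_h x^n / (1 + D_h\hat x)$, and asserts (invoking the Taylor expansion of $\ln t$ at $t=1$) that there is a constant $C_{Newton}$ with
\[
\bigl[(1-\xi)+\ln\xi+\tfrac12(1-\xi)^2\bigr]^2 \;\le\; C_{Newton}\,\bigl[\xi-1-\ln\xi\bigr]^3
\]
for all $\xi>0$. The Taylor argument only controls $\xi$ near $1$; as $\xi\to\infty$ (i.e.\ $D_h x^{n+1}\to 0^+$, the degenerate boundary of $\mathcal Q$) the left side scales as $\tfrac14\xi^4$ while the right side scales as $C_{Newton}\xi^3$, so no finite $C_{Newton}$ works. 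Equivalently, in your notation: for the cell potential $G(u-1,x_0)$ alone one has $G''(u)=q_1'(u)\sim (x_0 u)^{-1}$ and $|G'''(u)|=|q_1''(u)|\sim (x_0 u^2)^{-1}$ as $u\to 0^+$, so $|G'''|/(G'')^{3/2}\sim (x_0/u)^{1/2}\to\infty$, and $F_2$ is not self-concordant on $\mathcal Q$. Your observation that this failure is exactly compensated by the $\dt^2$-weighted logarithmic barrier carried inside $F_4$ — yielding, in the cell variable $u$, $\tilde\Psi_i''(u)\sim \kappa u^{-2}$ and $|\tilde\Psi_i'''(u)|\sim 2\kappa u^{-3}$ with ratio $|\tilde\Psi_i'''|^2/(\tilde\Psi_i'')^3\to 4/\kappa$ as $u\to 0^+$ — is precisely the missing ingredient. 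The rest of your argument (affine invariance, the $\sum b_i^{3/2}\le(\sum b_i)^{3/2}$ summation rule, recognition of \eqref{scheme-2nd-Newton} as the Newton system for $F$ with decrement \eqref{lambda} and damping \eqref{Newton}, and the standard two-phase convergence of damped Newton on a self-concordant function with a lower bound) is routine and correct. In short: the paper's decomposition (self-concordance of $F_2$ and $F_4$ separately) does not go through because $F_2$ alone degenerates at $D_h x^{n+1}=0$; your decomposition (fold the barrier term of $F_4$ into $F_2$ before taking cell-wise third derivatives) is the one that does, and it simultaneously explains why the barrier term $-\dt^2\langle\ln(1+D_h\hat x),\mathbf 1\rangle$ was built into the scheme in the first place.
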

\noindent\textbf{Proof.} Since  linear and quadratic functions  have zero third derivative, $F_1(\hat{x})$ and $F_3(\hat{x})$ are  self-concordant.  We  just need to prove $F_2(\hat{x})$ and $F_4(\hat{x})$ are   self-concordant  along every line in $\mathcal{Q}$.

Suppose  $ x^n\in\mathcal{Q}$ and let  $$\xi_{i-\frac{1}{2}}:=\frac{D_h x^n_{i-\frac{1}{2}}}{1+D_h \hat{x}_{i-\frac{1}{2}}}, i=1,\cdots,M.$$
Then $\xi_{i-\frac{1}{2}}>0,\  i=1,\cdots,M$.
For $\forall i=1,\cdots,M$,  we can obtain  
\begin{equation*}
\begin{split}
\frac{\partial F_2(\hat{x}_i)}{\partial \hat{x}_i}&=\frac{f_0(X_{i-\frac{1}{2}})}{1+D_h \hat{x}_{i-\frac{1}{2}}-D_h x^n_{i-\frac{1}{2}}}\ln(\xi_{i-\frac{1}{2}})\\
&-\frac{f_0(X_{i+\frac{1}{2}})}{1+D_h \hat{x}_{i+\frac{1}{2}}-D_h x^n_{i+\frac{1}{2}}}\ln(\xi_{i+\frac{1}{2}}),
\end{split}
\end{equation*}

\begin{equation*}
\begin{split}
\frac{\partial^2 F_2(\hat{x}_i)}{\partial\hat{x}_{i}^2}=&-\frac{f_0(X_{i-\frac{1}{2}})}{h\Big(1+D_h \hat{x}_{i-\frac{1}{2}}-D_h x^n_{i-\frac{1}{2}}\Big)^2}[(1-\xi_{i-\frac{1}{2}})+\ln(\xi_{i-\frac{1}{2}})]\\
&-\frac{f_0(X_{i+\frac{1}{2}})}{h\Big(1+D_h \hat{x}_{i+\frac{1}{2}}-D_h x^n_{i+\frac{1}{2}}\Big)^2}[(1-\xi_{i+\frac{1}{2}})+\ln(\xi_{i+\frac{1}{2}})],
\end{split}
\end{equation*}
\begin{equation*}
\begin{split}
\Big|\frac{\partial^3 F_2(\hat{x}_i)}{\partial\hat{x}_i^3}\Big|&=\Big|\frac{2f_0(X_{i-\frac{1}{2})}}{h^2\Big(1+D_h \hat{x}_{i-\frac{1}{2}}-D_h x^n_{i-\frac{1}{2}}\Big)^3}\left[(1-\xi_{i-\frac{1}{2}})+\ln(\xi_{i-\frac{1}{2}})+\frac{1}{2}(1-\xi_{i-\frac{1}{2}})^2\right]\\
&-\frac{2f_0(X_{i+\frac{1}{2})}}{h^2\Big(1+D_h \hat{x}_{i+\frac{1}{2}}-D_h x^n_{i+\frac{1}{2}}\Big)^3}\left[(1-\xi_{i+\frac{1}{2}})+\ln(\xi_{i+\frac{1}{2}})+\frac{1}{2}(1-\xi_{i+\frac{1}{2}})^2\right]\Big|\\
&\leq \Big|\frac{2f_0(X_{i-\frac{1}{2})}}{h^2\Big(1+D_h \hat{x}_{i-\frac{1}{2}}-D_h x^n_{i-\frac{1}{2}}\Big)^3}\left[(1-\xi_{i-\frac{1}{2}})+\ln(\xi_{i-\frac{1}{2}})+\frac{1}{2}(1-\xi_{i-\frac{1}{2}})^2\right]\\
&+\frac{2f_0(X_{i+\frac{1}{2})}}{h^2\Big(1+D_h \hat{x}_{i+\frac{1}{2}}-D_h x^n_{i+\frac{1}{2}}\Big)^3}\left[(1-\xi_{i+\frac{1}{2}})+\ln(\xi_{i+\frac{1}{2}})+\frac{1}{2}(1-\xi_{i+\frac{1}{2}})^2\right]\Big|.
\end{split}
\end{equation*}
Note that  $$\ln t= \ln (1+(t-1))=(t-1)-\frac{1}{2}(t-1)^2+O(t-1)^3, \ \forall t>0,$$  
hence there exists a constant $C_{Newton}>0$ such that 
\begin{equation*}
\left[(1-\xi_{i-\frac{1}{2}})+\ln(\xi_{i-\frac{1}{2}})+\frac{1}{2}(1-\xi_{i-\frac{1}{2}})^2\right]^2\leq C_{Newton}\left[-(1-\xi_{i-\frac{1}{2}})-\ln(\xi_{i-\frac{1}{2}})\right]^3.
\end{equation*}
If  the parameter $a:=(h\min\limits_{0\leq i\leq M}f_0(X_i))/2C_{Newton}^2$, we obtain 
\begin{equation}\label{equ:selfcon}
\begin{split}
&\left|\frac{2f_0(X_{i-\frac{1}{2})}}{h^2\Big(1+D_h \hat{x}_{i-\frac{1}{2}}-D_h x^n_{i-\frac{1}{2}}\Big)^3}\left[(1-\xi_{i-\frac{1}{2}})+\ln(\xi_{i-\frac{1}{2}})+\frac{1}{2}(1-\xi_{i-\frac{1}{2}})^2\right]\right|^2\\
&\leq 2a^{-\frac{1}{2}}\left(-\frac{f_0(X_{i-\frac{1}{2}})}{h\Big(1+D_h \hat{x}_{i-\frac{1}{2}}-D_h x^n_{i-\frac{1}{2}}\Big)^2}\left[(1-\xi_{i-\frac{1}{2}})+\ln(\xi_{i-\frac{1}{2}})\right]\right)^3, \forall i=1,\cdots, M.
\end{split}
\end{equation}
So $F_2(\hat{x})$  is self-concordant.   By the similar method, $F_4(\hat{x})$ is also  self-concordant. Based on {\bf{Theorem 2.2.3}} in \cite{Y. Nesterov(1994)}, Newton's iteration is convergent in $\mathcal{Q}$.$\hfill\Box$

\section{The numerical results}
\label{sec:numResults}
In this section, we present an example with a positive state   to demonstrate the convergence rate of the numerical scheme.

Before that, we define the error of a numerical solution measured  in the $\mathcal{L}^2$ and $\mathcal{L}^{\infty}$ norms as:
\begin{equation}\label{L2}
\|e_h\|_2^2=\frac{1}{2}\left(e_{h_0}^2 h_{x_{0}}+\sum\limits_{i=1}^{M-1}e_{h_i}^2 h_{x_i}+e_{h_M}^2 h_{x_M}\right),
\end{equation}
and
\begin{equation}\label{Linf}
\|e_h\|_{\infty}=\max\limits_{0\leq i\leq M}\{|e_{h_i}|\},
\end{equation}
 where $e_h=(e_{h_0},e_{h_1},\cdots,e_{h_M})$ and
for the error of the density  $e_h^f:=f-f_h$,
 \begin{equation*}
 h_{x_i}=x_{i+1}-x_{i-1}, \ \ 1\leq i \leq M-1; \ \ \
 h_{x_0}=x_{1}-x_{0}; \ \
 h_{x_M}=x_{M}-x_{M-1}, \end{equation*}
and for  the error of the  trajectory $e_h^x:=x-x_h$,
 \begin{equation}
h_{x_i}=2h,  \ \ 1\leq i\leq M-1, \ \ \ h_{x_0}=h_{x_M}=h , \nonumber
\end{equation}
 where $h$ is the spatial step.

 Consider the problem \eqref{eqcm}-\eqref{eqbc} in dimension one with a smooth positive initial data
\begin{equation}\label{eqiniEx}
f_0(x)=0.5-(x-0.5)^2, x\in\Omega:=[0,1].
\end{equation}
Firstly, the trajectory equation \eqref{eqtra} with the initial and boundary condition \eqref{eqtrabou}-\eqref{eqtraini} can be solved by the fully discrete scheme \eqref{scheme-2nd-1}. Subsequently, the density function $f$ in \eqref{equ:conservationL} can be approximated by \eqref{eqDen}. The reference ``exact" solution is obtained numerically on a much finer mesh with $h=\frac{1}{10000},\  \tau=h$.  We choose $a=(h\min\limits_{0\leq i\leq M}f_0(X_i))/2$ in Theorem \ref{thm:Newton}.  
Table \ref{table:Convergence}  shows the second order convergence  for  density $f$ and trajectory $x$ in the   $\mathcal{L}^2$ and  $\mathcal{L}^{\infty}$ norm with both $m=\frac{5}{3}$ and $m=2$ at time $t=0.05$. The results  verify  the optimal convergence rate of the numerical scheme.  

\begin{center}
\scriptsize
\begin{threeparttable}[b]
\setlength{\abovecaptionskip}{10pt}
\setlength{\belowcaptionskip}{3pt}
\caption{\scriptsize  Convergence  rate of solution $f$ and trajectory $x$ at time $t=0.05$}
\label{table:Convergence}
\begin{tabular}{@{ } l c c c c c  c c c c}
\hline
\multicolumn{1}{l}{}
&\multicolumn{8}{c}{$m=5/3$}\\\hline
 $h$    &$\tau$ &$\|e_h^f\|_2$   & Order &$\|e_h^f\|_{\infty}$   & Order &$ \|e_h^x\|_2 $   & Order  &$ \|e_h^x\|_{\infty} $   & Order \\\hline
1/200 &1/200&1.506e-04 && 3.277e-04& &7.593e-05&&7.844e-05&\\\hline
1/400 &1/400&3.620e-05 &2.056&8.421e-05&1.960&1.871e-05&2.021&1.934e-05&2.020\\\hline
 1/800 &1/800 &8.495e-06 &2.092&2.033e-05&2.050&4.464e-06&2.067&4.617e-06&2.066\\\hline
1/1600 &1/1600 & 1.887e-06&2.170&4.695e-06&2.114  &1.000e-06&2.158&1.036e-06&2.156  \\\hline

\hline
\multicolumn{1}{l}{}
&\multicolumn{8}{c}{$m=2$}\\\hline
 $h$    &$\tau$ &$\|e_h^f\|_2$   & Order &$\|e_h^f\|_{\infty}$   & Order &$ \|e_h^x\|_2 $   & Order  &$ \|e_h^x\|_{\infty} $   & Order \\\hline
1/200 &1/200 & 1.502e-04&  &3.279e-04& &7.642e-05& & 7.902e-05 \\\hline
1/400 &1/400 &3.599e-05 & 2.061&8.370e-05&1.970&1.873e-05&2.028&1.938e-05&2.028\\\hline
1/800 &1/800 &8.431e-06 &2.094&2.005e-05&2.061&4.458e-06&2.071&4.615e-06&2.070\\\hline
1/1600 &1/1600 &1.853e-06 &2.186&4.563e-06&2.136&9.871e-07&2.175&1.024e-06&2.172\\\hline
\end{tabular}
\begin{tablenotes}
     \scriptsize

                \item[1]    $\tau$ is the time step and  $h$ is the space step.
        \end{tablenotes}
\end{threeparttable}
\end{center}

 \section{Concluding remarks} \label{sec: conclusion} 

The porous medium equation, based on an energetic variational approach, is taken into consideration. We develop and analyze a second order accurate numerical scheme. The unique solvability, energy stability are proved with the help of the convexity analysis. In addition, we provide a detailed convergence analysis for the proposed numerical scheme, which is accomplished by a higher order asymptotic expansion of the numerical solution, combined with two step error estimates: a rough estimate is to control the highly nonlinear term in a discrete $W^{1,\infty}$ norm, and a refined estimate is o derive the optimal error order. The convergence of the Newton's iteration is analyzed as well. Some numerical examples are presented. 
 

\section*{Acknowledgments}
This work is supported in part by the Grants NSFC 11671098, 11331004, 91630309, a 111 Project B08018 (W. Chen), NSFC 11901109 (C. Duan), NSF-DMS 1759535, NSF-DMS 1759536 (C. Liu), NSF DMS-1418689 (C. Wang), NSFC 11271281 (X. Yue). C. Wang also thanks the Key Laboratory of Mathematics for Nonlinear Sciences, Fudan University, for support during his visit.

\appendix

\section{Proof of Lemma~\ref{lem:convexity-1}} \label{sec:proof lemma 2}

A direct calculation gives
\begin{eqnarray}
  q'_1 (x) = - \frac{\frac{1}{x} (x - x_0) - ( \ln x - \ln x_0)}{(x- x_0)^2} > 0 ,
  \label{lem 2-1}
\end{eqnarray}
in which the convexity of $- \ln x$ (for $x>0$) has been applied:
\begin{eqnarray}
  - \frac{1}{x} (x - x_0) + ( \ln x - \ln x_0)  > 0 .  \label{lem 2-2}
\end{eqnarray}
Meanwhile, a detailed Taylor expansion leads to
\begin{eqnarray}
    \ln x - \ln x_0 = \frac{1}{x} (x - x_0) + \frac{1}{2 \zeta^2} (x - x_0)^2 ,
    \quad \mbox{with $\zeta$ between $x_0$ and $x$},  \label{lem 2-3}
\end{eqnarray}
which in turn implies that
\begin{eqnarray}
  q'_1 (x) = - \frac{\frac{1}{x} (x - x_0) - ( \ln x - \ln x_0)}{(x- x_0)^2}
  = \frac{1}{2 \zeta^2} ,   \quad \mbox{with $\zeta$ between $x_0$ and $x$} .
   \label{lem 2-4}
\end{eqnarray}
Therefore, an application of the intermediate value theorem indicates that
\begin{eqnarray}
  &&
  q_1 (y) - q_1 (x) = q'_1 (\eta) (y-x) ,  \, \, \mbox{with $\eta$ between $x$ and $y$} ,
   \label{lem 2-5-1}
\\
  &&
  q'_1 (\eta) = \frac{1}{2 \zeta_\eta^2} ,   \quad \mbox{with $\zeta_\eta$ between $x_0$ and $\eta$}  .  \label{lem 2-5-2}
\end{eqnarray}
Of course, a careful analysis implies that
\begin{eqnarray}
  q'_1 (\eta) = \frac{1}{2 \zeta_\eta^2} ,   \quad \mbox{$q'_1 (\eta)$ is between} \, \,
  \frac{1}{2 y^2}, \, \frac{1}{2 x^2} , \, \mbox{and} \, \frac{1}{2 x_0^2}, \, \,
   \forall x>0, \, y>0  .   \label{lem 2-5-3}
\end{eqnarray}
This completes the proof of~\eqref{q convexity-2}.

A further calculation gives
\begin{eqnarray}
  q''_1 (x) = - \frac{-\frac{1}{x^2} (x - x_0)^3
  - 2 (x- x_0) \Big( \frac{1}{x} (x - x_0) - ( \ln x - \ln x_0) \Big) }{(x- x_0)^4} \le 0 ,
  \label{lem 2-6-1}
\end{eqnarray}
for any $x >0$, in which a higher order Taylor expansion has been applied:
\begin{eqnarray}
    \ln x - \ln x_0 = \frac{1}{x} (x - x_0) + \frac{1}{2 x^2} (x - x_0)^2
    + \frac{1}{3 \zeta^3} (x - x_0)^3,
    \quad \mbox{with $\zeta$ between $x_0$ and $x$} .  \label{lem 2-6-2}
\end{eqnarray}
As a direct consequence, by an introduction of $q_2 (x) :=  \frac{q_1 (y) - q_1 (x)}{y -x}$ for a fixed $y >0$, we get
\begin{eqnarray}
  &&
   q'_2 (x) = \frac{-q'_1 (x) (y - x) + ( q_1 (y) - q_1 (x) )}{(y-x)^2} \le 0 ,
  \label{lem 2-7}
\\
  &&  \mbox{since $q_1 (y)$ is concave:} \, \, \,
  - q'_1 (x) (y - x) + ( q_1 (y) - q_1 (x) ) \le 0 ,  \, \, \, \forall y>0 , \, x > 0 .
  \nonumber
\end{eqnarray}
This completes the proof of Lemma~\ref{lem:convexity-1}.

 {\footnotesize
\bibliographystyle{unsrt}

}

\end{document}